\documentclass[11pt]{amsart}

\usepackage{parskip}
\setlength{\parindent}{12pt}

\usepackage{amssymb}

\usepackage{mathrsfs}
\usepackage{color}

\usepackage{amsmath}
\usepackage{amsthm}

\marginparwidth 0pt \oddsidemargin 0pt \evensidemargin 0pt
\marginparsep 0pt \topmargin 0pt \textwidth 6.5in \textheight 8.5in

\newtheorem{theorem}{Theorem}[section]
\newcounter{tmp}

\newtheorem{lemma}[theorem]{Lemma}
\newtheorem{proposition}[theorem]{Proposition}
\newtheorem*{lemma*}{Lemma}
\newtheorem{corollary}[theorem]{Corollary}
\newtheorem{conj}[theorem]{Conjecture}

\usepackage{turnstile}
\theoremstyle{definition}
\newtheorem{definition}[theorem]{Definition}
\newtheorem{example}[theorem]{Example}

\theoremstyle{remark} 
\newtheorem{remark}[theorem]{Remark}

\newcommand{\abs}[1]{\lvert#1\rvert}


\newcommand{\so}{\mathcal{O}}

\newcommand{\fQ}{\mathbb{Q}}

\newcommand{\fR}{\mathbb{R}}

\newcommand{\fC}{\mathbb{C}}

\newcommand{\rN}{\mathbb{N}}

\newcommand{\rZ}{\mathbb{Z}}

\newcommand{\doD}{\mathscr{D}}

\newcommand{\diV}{\textrm{div}}

\newcommand{\Vw}{\rho}

\newcommand{\dmM}{\mathcal{M}}

\newcommand{\row}[2]{#1_1,\ldots,#1_{#2}}

\usepackage{hyperref}
\hypersetup{
    colorlinks=true,
    linkcolor=blue,
    filecolor=magenta,      
    urlcolor=cyan,
}
 
\begin{document}

\title[Hodge filtration for weighted homogeneous singular $\fQ$-divisor]{Hodge filtration and Hodge ideals for $\fQ$-divisors with weighted homogeneous isolated singularities}

\author{Mingyi Zhang}
\address{Department of Mathematics, Northwestern University, 2033 Sheridan Road, Evanston, IL 60208, USA}
\email{mingyi@math.northwestern.edu}
\subjclass[2010]{14F10, 14J17, 32S25}

\begin{abstract}
We give an explicit formula for the Hodge filtration on the $\doD_X$-module $\so_X(*Z)f^{1-\alpha}$ associated to the effective $\fQ$-divisor $D=\alpha\cdot Z$, where $0<\alpha\le1$ and $Z=(f=0)$ is an irreducible hypersurface defined by $f$, a weighted homogeneous polynomial with an isolated singularity at the origin. In particular this gives a formula for the Hodge ideals of $D$. We deduce a formula for the generating level of the Hodge filtration, as well as further properties of Hodge ideals in this setting. We also extend the main theorem to the case when $f$ is a germ of holomorphic function that is convenient and has non-degenerate Newton boundary.
\end{abstract}
\maketitle

\section{Introduction}
\begingroup
\setcounter{tmp}{\value{theorem}}
\setcounter{theorem}{0}

\renewcommand*{\thetheorem}{\Alph{theorem}}

  Let $D$ be an integral and reduced effective divisor on a smooth complex variety $X$. Let $\so_X(*D)$ be the sheaf of rational functions with poles along $D$. This is also a left $\doD_X$-module underlying the mixed Hodge module $j_*\fQ_U^H[n]$, where $U=X\backslash D$ and $j:U\hookrightarrow X$ is the inclusion map.
Any $\doD_X$-module associated to a mixed Hodge module has a good filtration  $F_\bullet$, the Hodge filtration of the mixed Hodge module \cite{Sai90}.

To study the Hodge filtration of $\so_X(*D)$, it seems more convenient to consider a series of ideal sheaves, defined by Musta\c{t}\u{a} and Popa \cite{MP16}, which can be considered to be a generalization of multiplier ideals of divisors. The Hodge ideals $\{I_k(D)\}_{k\in\rN}$ of the divisor $D$ are defined by:
$$F_k\so_X(*D)=I_k(D)\otimes_{\so_X}\so_X\big{(}(k+1)D\big{)},~\forall~k\in\rN.$$
It turns out that $I_0(D)=\mathscr{J}\big{(}(1-\epsilon)D\big{)}$, the multiplier ideal of the divisor $(1-\epsilon)D$, $0<\epsilon\ll1$.

Recently, in \cite{MP18a} and \cite{MP18b}, the authors extend the notion of Hodge ideals to the case when $D$ is an arbitrary effective $\fQ$-divisor on X. Hodge ideals $\{I_k(D)\}_{k\in\rN}$ are defined in terms of the Hodge filtration $F_\bullet$ on some $\doD_X$-module associated with $D$ (see definition in Section \ref{S:filtereddm}; cf. \cite[\S 2-4]{MP18a} for more details). When $D$ is an integral and reduced divisor, this recovers the definition of Hodge ideals $I_k(D)$ mentioned above.

The Hodge filtration $F_\bullet$ is usually hard to describe. However, it does have an explicit formula in the case when $D$ is defined by a reduced weighted homogeneous polynomial $f$ that has an isolated singularity at the origin, which is proved by Morihiko Saito \cite{Sai09}. To state Saito's result and to clarify the notations for this paper, we denote:
\begin{itemize}
\item $\so=\fC\{x_1,\ldots, x_n\}$ the ring of germs of holomorphic function for local coordinates $x_1,\ldots, x_n$.

\item
$f:(\fC^n,0)\to (\fC,0)$ a germ of holomorphic function that is quasihomogeneous, i.e., $f\in \mathcal{J}(f)=(\frac{\partial f}{\partial x_1},\ldots, \frac{\partial f}{\partial x_n})$, and with isolated singularity at the origin. Kyoji Saito \cite{KSai71} showed that after a biholomorphic coordinate change, we can assume $f$ is a weighted homogeneous polynomial with an isolated singularity at the origin. We will keep this assumption for $f$ unless otherwise stated. 

\item $w=w(f)=(\row wn)$ the weights of the weighted homogeneous polynomial $f$.

\item $g:(\fC^n,0)\to(\fC,0)$ a germ of a holomorphic function, and we write $$g=\sum_{A\in\rN^n}g_Ax^A,$$
where $A=(a_1,\ldots, a_n)$, $g_A\in\fC$ and $x^A=x_1^{a_1}\cdots x_n^{a_n}$. 

\item $\rho(g)$ the weight of an element $g\in\so$ defined by 
\begin{equation}\label{E:wtfunction}
\rho(g)=\left(\sum_{i=1}^mw_i\right)+\inf\{\langle w, A\rangle: g_A\neq 0\}.
\end{equation} 
The weight function $\rho$ defines a filtration on $\so$ as 
$$\so^{>k}=\{u\in\so: \rho(u)>k\};$$
$$\so^{\ge k}=\{u\in\so:\rho(u)\ge k\}.$$

\end{itemize}

Throughout the paper, we consider $\doD_X$-modules locally around the isolated singularity, so we can assume $X=\fC^n$ and identify the stalk at the singularity to be that of $\doD_X$-modules on $\fC^n$. For example, we replace $F_k\so_{X,0}(*D)$ by $F_k\so_X(*D)$.

Now we can state the formula proved by M. Saito (see \cite[Theorem 0.7]{Sai09}), namely:
\begin{equation}\label{E:saito}
F_k\so_X(*D)=\sum_{i=0}^kF_{k-i}\doD_X\left(\frac{\so^{\ge i+1}}{f^{i+1}}\right),~\forall~ k\in\rN.
\end{equation}
Since we can now construct a Hodge filtration on analogous $\doD_X$-modules associated to any effective $\fQ$-divisor $D$, it is natural to ask if it satisfies a similar formula in the case when $D$ is supported on a hypersurface defined by such a polynomial $f$.

From now on, we set the divisor to be $D=\alpha Z$, where $0<\alpha\le 1$ and $Z=(f=0)$ is an integral and reduced effective divisor defined by $f$, a weighted homogeneous polynomial with an isolated singularity at the origin. In this case, the associated $\doD_X$-module is the well-known twisted localization $\doD_X$-module $\dmM(f^{1-\alpha}):=\so_X(*Z)f^{1-\alpha}$ \big{(}see more details in \cite{MP18a} about how to construct the Hodge filtration $F_\bullet\dmM(f^{1-\alpha})$\big{)}. With new ingredients from Musta\c{t}\u{a} and Popa's \cite{MP18b}, where this Hodge filtration is compared to the $V$-filtration on $\dmM(f^{1-\alpha})$, we can generalize Saito's formula and prove the following theorem:
\begin{theorem}\label{thma}
If $D=\alpha Z$, where $0<\alpha\le 1$ and $Z=(f=0)$ is an integral and reduced effective divisor defined by $f$, a weighted homogeneous polynomial with an isolated singularity at the origin, then we have
 $$F_k\dmM(f^{1-\alpha})=\sum_{i=0}^kF_{k-i}\doD_X\cdot\left(\frac{\so^{\ge \alpha+i}}{f^{i+1}}f^{1-\alpha}\right),$$
where the action $\cdot$ of $\doD_X$ on the right hand side is the action on the left $\doD_X$-module $\dmM(f^{1-\alpha})$ defined by
$$D\cdot(wf^{1-\alpha}):=\left(D(w)+w\frac{(1-\alpha)D(f)}{f}\right)f^{1-\alpha}, ~\textrm{for any}~D\in \textrm{Der}_{\fC}\so_X.$$
\end{theorem}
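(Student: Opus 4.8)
The plan is to compute $F_k\dmM(f^{1-\alpha})$ through its comparison with the $V$-filtration along the graph embedding, and then to exploit the fact that for a weighted homogeneous isolated singularity all the relevant data is governed by the weight grading and the Euler vector field. First I would pass to the graph embedding $i_f\colon X\hookrightarrow X\times\fC_t$ and work with $B_f=(i_f)_+\so_X$, a cyclic $\doD_{X\times\fC_t}$-module generated by $\delta:=\delta(t-f)$, carrying both its Hodge filtration $F_\bullet$ and its $V$-filtration $V^\bullet$ along $t=0$. The key input is the comparison theorem of \cite{MP18b}, which expresses $F_k\dmM(f^{1-\alpha})$ in terms of $F_\bullet B_f$ and the piece $V^{\alpha}B_f$, identifying sections $wf^{1-\alpha}$ lying in $F_k$ with classes $\sum_j(\partial_t^j\text{-components})$ subject to a $V$-filtration condition indexed by $\alpha$. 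Weighted homogeneity enters through the Euler relation $\sum_iw_ix_i\partial_if=f$: writing $\tilde E=\sum_iw_ix_i\partial_i+t\partial_t$, the operator $t\partial_t$ acts semisimply on $B_f$ and its eigenvalues on a monomial class $x^A\partial_t^j\delta$ are read off from $\langle w,A\rangle$ and $j$, so that $V^\bullet B_f$ and the graded pieces $\mathrm{gr}_V$ become completely explicit.

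Next I would set up the dictionary between the weight filtration and the $V$-filtration: the jumping numbers of $V^\bullet$ are exactly the values $\rho(x^A)$ taken over a monomial basis of the Milnor algebra $\so/\mathcal J(f)$, so that the conditions ``$\rho(u)\ge\alpha+i$'' on the right-hand side are precisely the membership conditions $u\cdot\partial_t^i\delta\in V^{\alpha+i}$ translated back through $f^{1-\alpha}$. When $\alpha=1$ this recovers the thresholds $\so^{\ge i+1}$ of Saito's formula \eqref{E:saito}; the only change for general $0<\alpha\le1$ is a uniform shift of the eigenvalue of $t\partial_t$ by $1-\alpha$, shifting each threshold from $i+1$ to $\alpha+i$. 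With this in place, the containment ``$\supseteq$'' is the more direct direction: each generator $uf^{-i-\alpha}$ with $\rho(u)\ge\alpha+i$ is shown to lie in $F_i\dmM(f^{1-\alpha})$ via the comparison and the explicit $V$-computation, and then $F_{k-i}\doD_X\cdot F_i\dmM(f^{1-\alpha})\subseteq F_k\dmM(f^{1-\alpha})$ by compatibility of the Hodge filtration with the order filtration on $\doD_X$.

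The main obstacle is the reverse containment ``$\subseteq$'', i.e.\ showing the $\doD_X$-span of these weighted generators already exhausts $F_k$. I would argue by induction on $k$, using the comparison theorem to reduce an arbitrary element of $F_k$ to a combination of elements of $F_{k-1}$ acted on by first-order operators plus ``new'' generators at level $k$, and then controlling the new generators by the explicit graded pieces of $V^\bullet$. The technical heart is an exactness statement coming from the isolated singularity: since $\partial_1f,\ldots,\partial_nf$ form a regular sequence in $\so$, the associated Koszul complex is acyclic in positive degrees, which lets me solve the division problems that express a given class as $\sum_iP_i\cdot(u_if^{-i-\alpha})$ with $P_i\in F_{k-i}\doD_X$ and $\rho(u_i)\ge\alpha+i$, while keeping track of weights. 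Managing the bookkeeping of weights under repeated application of the $\partial_i$ (each lowering $\rho$ by $w_i$) and under multiplication by $1/f$, so that no class of too-low weight is produced, is where the weighted homogeneous hypothesis is used most delicately, and is the step I expect to require the most care.
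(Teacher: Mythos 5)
Your opening step coincides with the paper's: passing to the graph embedding and using the comparison results of \cite{MP18b} to express $F_k\dmM(f^{1-\alpha})$ through $V^\alpha\iota_+\so_X\cap\sum_{i=0}^k\so_X\otimes\partial_t^i$ is exactly Proposition \ref{prop31}, with Lemma \ref{L:psidxaction} transporting the $\doD_X$-action back through the comparison map. The divergence is at the technical core. The paper does not compute the $V$-filtration of $\iota_+\so_X$ from first principles: it quotes M.~Saito's formula $F_kV^{\alpha}\so_X[\partial_t,\partial_t^{-1}]=\sum_{i\le k}F_{k-i}\doD_{X}(\so^{\ge \alpha+i}\otimes\partial_t^i)$ (Lemma \ref{L:421}, from \cite{Sai09}) and then only has to intersect with $\so_X[\partial_t]$. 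What you propose is, in effect, to reprove that lemma using the Euler operator and a Koszul/regular-sequence induction. This is a legitimate route in principle --- it is how the paper proves the analogous lemma for Newton non-degenerate $f$ in Section \ref{S:nondegenerate} --- but your outline of this part rests on a claim that is false, and the step you defer (``the step I expect to require the most care'') is precisely the entire content.

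The false claim is the term-by-term ``dictionary.'' It is not true that the weight condition $\rho(u)\ge\alpha+i$ is equivalent to membership of $u\otimes\partial_t^i$ (with lower-order corrections allowed) in the relevant $V$-filtration piece; this equivalence holds only at level $i=0$, where indeed $\tilde{V}^{\alpha}\so_X=\so^{\ge\alpha}$. For $i\ge 1$ the weight condition is sufficient but not necessary: by Proposition \ref{P:saitomicrovfil} the ideal $\tilde{V}^{\alpha+i}\so_X$ contains the extra term $(\partial_1f,\ldots,\partial_nf)\tilde{V}^{\alpha+i-1}\so_X$, and in Example \ref{Ex:notequal} ($f=x^2+y^5$, $\tfrac{9}{10}<\alpha\le 1$) one has $x^2\in\tilde{I}_1(D)=\tilde{V}^{\alpha+1}\so_X$ although $\rho(x^2)=\tfrac{17}{10}<\alpha+1$. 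The weight filtration and the $V$-filtration match only after summing against $F_\bullet\doD_X$ over all levels simultaneously --- which is exactly what Lemma \ref{L:421} asserts and which cannot be split level by level; this failure is also why $I_k(D)$, $\tilde{I}_k(D)$ and $\so^{\ge\alpha+k}$ all differ for $k\ge 1$. Your semisimplicity claim is likewise misstated: a direct computation gives $t\partial_t(x^A\otimes\partial_t^j)=fx^A\otimes\partial_t^{j+1}-(j+1)x^A\otimes\partial_t^j$, so monomial classes are not eigenvectors of $t\partial_t$; the operator that is diagonal on them is $\sum_iw_ix_i\partial_{x_i}+t\partial_t$ (eigenvalue $\langle w,A\rangle-j-1$), and its eigenspace decomposition alone does not determine $V^\bullet$. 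Since your induction for ``$\subseteq$'' is organized around these purported graded pieces, and even your ``$\supseteq$'' direction needs the unproved inclusion $\so^{\ge\alpha+i}\otimes\partial_t^i\subseteq V^\alpha\iota_+\so_X$, the proposal has a genuine gap exactly where Lemma \ref{L:421} enters. To close it you must either cite that lemma, or carry out the regular-sequence division with weight bookkeeping in the microlocalization $\so_X[\partial_t,\partial_t^{-1}]$, absorbing at each step the correction terms it produces one level down, as in the proof given in Section \ref{S:nondegenerate}.
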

 Notice that if we set $\alpha=1$, Theorem \ref{thma} recovers Saito's formula (\ref{E:saito}) mentioned above.

For any polynomial $f$ with an isolated singularity at the origin, it is well known that the Jacobian algebra
$$\mathcal{A}_f:=\fC\{\row xn\}/(\partial_1f,\ldots, \partial_nf)$$
is a finite dimensional $\fC$-vector space. Fix a monomial basis $\{\row v\mu\}$ for this vector space, where $\mu$ is the dimension of $\mathcal{A}_f$ (usually called the Milnor number of $f$). With this notation, applying Theorem \ref{thma}, we can give a formula for the Hodge filtration that is easier to use in practice:
\begin{corollary}\label{C:B}
If $D=\alpha Z$, where $0<\alpha\le 1$ and $Z=(f=0)$ is an integral and reduced effective divisor defined by $f$, a weighted homogeneous polynomial with an isolated singularity at the origin, then we have
$$F_0\dmM(f^{1-\alpha})=f^{-1}\cdot \so^{\ge \alpha}f^{1-\alpha}$$
and
$$F_k\dmM(f^{1-\alpha})=(f^{-1}\cdot \sum_{v_j\in \so^{\ge k+1+\alpha}}\fC v_j) f^{1-\alpha}+F_1\doD_X\cdot F_{k-1}\dmM(f^{1-\alpha}).$$
Alternatively, in terms of Hodge ideals these formulas say that
$$I_0(D)=\so^{\ge\alpha}$$
and
$$I_{k+1}(D)=\sum_{v_j\in \so^{\ge k+1+\alpha}}\fC v_j+\sum_{1\le i\le n,a\in I_{k}(D)}\mathcal{O}_X\big{(}f\partial_ia-(\alpha+k)a\partial_if\big{)}.$$
\end{corollary}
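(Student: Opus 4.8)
The plan is to derive everything from Theorem \ref{thma} by reorganizing its double sum into a recursion in $k$ and then using the Jacobian relations of $f$ to trim the top-order generators down to the finite set indexed by the monomial basis $\{\row v\mu\}$ of $\mathcal{A}_f$. The $F_0$ statement is immediate: setting $k=0$ in Theorem \ref{thma} leaves only the $i=0$ summand, and since $F_0\doD_X=\so_X$ while multiplication by any monomial does not decrease $\rho$ (so each $\so^{\ge c}$ is an ideal of $\so$), this collapses to $F_0\dmM(f^{1-\alpha})=f^{-1}\so^{\ge\alpha}f^{1-\alpha}$, i.e. $I_0(D)=\so^{\ge\alpha}$.

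For $k\ge 1$ I would first isolate the recursive shape. Because $\operatorname{gr}^F\doD_X\cong\so_X[\xi_1,\dots,\xi_n]$ is generated in degree one, $F_1\doD_X\cdot F_m\doD_X=F_{m+1}\doD_X$ for all $m\ge0$; applying this summand by summand to the formula for $F_{k-1}\dmM(f^{1-\alpha})$ identifies $F_1\doD_X\cdot F_{k-1}\dmM(f^{1-\alpha})$ with the $i=0,\dots,k-1$ part of the formula for $F_k\dmM(f^{1-\alpha})$. Hence $F_k\dmM(f^{1-\alpha})$ is $F_1\doD_X\cdot F_{k-1}\dmM(f^{1-\alpha})$ plus the single top summand $f^{-(k+1)}\so^{\ge\alpha+k}f^{1-\alpha}$, so the whole corollary reduces to understanding this top piece modulo $F_1\doD_X\cdot F_{k-1}\dmM(f^{1-\alpha})$.

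The engine of the reduction is the effect of a vector field on a generator. Differentiating with the twisted action gives
$$\partial_i\cdot\Big(\frac{a}{f^m}f^{1-\alpha}\Big)=\frac{f\partial_i a-(m-1+\alpha)\,a\,\partial_i f}{f^{m+1}}\,f^{1-\alpha},$$
a single identity that produces both outputs we need. Read with $m=k+1$ and $a\in I_k(D)$ it manufactures the differential-operator generators $f\partial_i a-(\alpha+k)a\partial_i f$ of $I_{k+1}(D)$. Read with $m=k$ it lets me trim the top piece: taking $u\in\so^{\ge\alpha+k}$ weighted homogeneous and using the Jacobian splitting $u=\sum_j c_j v_j+\sum_i h_i\partial_i f$ — which can be taken degree by degree since both $\mathcal{J}(f)$ and the monomial basis $\{v_j\}$ are weighted homogeneous — I would solve the identity for $a\,\partial_i f$ to write each $f^{-(k+1)}h_i\partial_i f\,f^{1-\alpha}$ as a $\fC$-combination of $\partial_i\cdot(f^{-k}h_i f^{1-\alpha})$ and $f^{-k}\partial_i h_i\,f^{1-\alpha}$, which is legitimate because $m-1+\alpha=k-1+\alpha\ge\alpha>0$.

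The main obstacle — and the only place weighted homogeneity of $f$ is truly used — is the weight bookkeeping that places these two remainder terms in $F_{k-1}\dmM(f^{1-\alpha})$. Using $\rho(\partial_i f)=\rho(f)-w_i$ and that $f$ has weighted degree one (so $\rho(f)=1+\sum_l w_l$), weighted homogeneity of $u$ forces $\rho(h_i)=\rho(u)-1+w_i\ge(\alpha+k-1)+w_i$ and $\rho(\partial_i h_i)=\rho(h_i)-w_i=\rho(u)-1\ge\alpha+k-1$; both inequalities say precisely that $f^{-k}h_i f^{1-\alpha}$ and $f^{-k}\partial_i h_i f^{1-\alpha}$ lie in the top ($i=k-1$) generating set of $F_{k-1}\dmM(f^{1-\alpha})$ supplied by Theorem \ref{thma}. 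Thus the $\mathcal{J}(f)$-part of the top piece is absorbed into $F_1\doD_X\cdot F_{k-1}\dmM(f^{1-\alpha})$ and only the Jacobian-algebra representatives $v_j$ survive, which yields the stated module formula; the reverse inclusion is free, as every displayed generator already lies in $F_k\dmM(f^{1-\alpha})$ by Theorem \ref{thma} and goodness of $F_\bullet$. Finally, translating through the defining relation $F_k\dmM(f^{1-\alpha})=I_k(D)f^{-(k+1)}f^{1-\alpha}$ and substituting the $m=k+1$ reading of the displayed identity converts this module recursion into the asserted recursion for $I_{k+1}(D)$.
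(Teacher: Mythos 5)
Your module-level argument is essentially sound and runs parallel to the paper's: the recursion $F_k\dmM(f^{1-\alpha})=f^{-(k+1)}\so^{\ge\alpha+k}f^{1-\alpha}+F_1\doD_X\cdot F_{k-1}\dmM(f^{1-\alpha})$ extracted from Theorem \ref{thma}, the twisted-derivative identity, the degree-by-degree Jacobian splitting, and the weight estimates $\rho(h_i)\ge\rho(u)-1+w_i$, $\rho(\partial_ih_i)\ge\rho(u)-1$ are exactly the ingredients the paper also uses, and they do prove the first two displayed formulas of the corollary (the $F_0$ case and the module recursion).

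The gap is in your last sentence, where you pass to the Hodge-ideal formula ``by translation.'' Since $F_0\doD_X=\so_X\subset F_1\doD_X$, the term $F_1\doD_X\cdot F_k\dmM(f^{1-\alpha})$ contains $F_k\dmM(f^{1-\alpha})$ itself, which via $F_k\dmM(f^{1-\alpha})=I_k(D)f^{-(k+1)}f^{1-\alpha}=fI_k(D)f^{-(k+2)}f^{1-\alpha}$ contributes the summand $fI_k(D)$ to $I_{k+1}(D)$. Thus your module recursion only translates to $I_{k+1}(D)=\sum_{v_j\in\so^{\ge\alpha+k+1}}\fC v_j+fI_k(D)+\sum_{i,\,a\in I_k(D)}\so_X\big(f\partial_ia-(\alpha+k)a\partial_if\big)$, whereas the corollary's ideal formula is the strictly stronger assertion that $fI_k(D)$ lies inside the other two terms. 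Your one-generator identity cannot establish this: solving $\partial_i\cdot\big(h_if^{-(k+1)}f^{1-\alpha}\big)$ for $h_i\partial_if$ leaves the residual term $f\partial_ih_i$ with $\partial_ih_i\in\so^{\ge\alpha+k}\subseteq I_k(D)$, i.e.\ each application re-creates an element of $fI_k(D)$, so the reduction is circular at the ideal level. (Nor can you simply absorb $fI_k(D)$ into $\so^{\ge\alpha+k+1}$ and re-trim, because $I_k(D)\not\subseteq\so^{\ge\alpha+k}$ in general: for $f=x^2+y^3$ and $5/6<\alpha\le1$ one has $x^2\in I_1(D)$ but $\rho(x^2)=11/6<\alpha+1$.) The paper closes exactly this gap with two further devices: the inductive absorption identity $f\big(f\partial_jb-(\alpha+k-1)b\partial_jf\big)=f\partial_j(fb)-(\alpha+k)(fb)\partial_jf$ with $fb\in I_k(D)$, which turns $f\cdot(\text{level-}k\text{ generators})$ into level-$(k+1)$ generators, and, for monomials $x^A$ with $x^A,\partial_1x^A\in I_k(D)$, the two-generator identity
$$(\alpha+k)x^A\partial_1f=(a_1-1)\big(f\partial_1(x^A)-(\alpha+k)x^A\partial_1f\big)-x_1\big(f\partial_1(\partial_1x^A)-(\alpha+k)\partial_1f\cdot\partial_1x^A\big),$$
in which the $f\partial_1(\cdot)$ parts cancel identically, leaving no residual multiple of $f$. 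Without an identity of this second kind (or the inductive absorption), your argument proves the module recursion but not the ideal recursion as stated.
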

In particular, the Hodge filtration is fully computable since it is good and hence can be determined by finitely many terms. In order to do this effectively, Saito \cite{Sai09} introduced the following measure of the complexity of the Hodge filtration: The \emph{generating level} of any $\doD_X$-module $(\dmM, F_{\bullet})$ with a good filtration is the smallest integer $k$ such that
$$F_l\doD_X\cdot F_k\dmM=F_{k+l}\dmM\quad\textrm{for all}\quad l\ge0.$$
Note that such a $k$ always exist by definition. In the case $D$ is integral and reduced, defined by a weighted homogeneous polynomial with an isolated singularity at the origin, Saito proves that the generating level of $\so_X(*D)$ is $[n-\tilde{\alpha}_f-1]$, where $\tilde{\alpha}_f$ is the minimal exponent (see \cite[Section 6]{MP18b}) which is also called the microlocal log canonical threshold (see \cite[Theorem 0.7]{Sai09}). Popa conjectured in \cite[Question 5.10]{Pop18} that a similar result should hold in the $\fQ$-divisor setup we considered here. Using Theorem \ref{thma}, we prove this conjecture is true:

\begin{corollary}\label{C:generatinglevel}
If $D=\alpha Z$, where $0<\alpha\le 1$ and $Z=(f=0)$ is an integral and reduced effective divisor defined by $f$, a weighted homogeneous polynomial with an isolated singularity at the origin, then the generating level of $\dmM(f^{1-\alpha})$ is $[n-\tilde{\alpha}_f-\alpha]$.
\end{corollary}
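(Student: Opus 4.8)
The plan is to read off the generating level from a clean recursive form of the Hodge filtration implied by Theorem \ref{thma}, thereby reducing the whole computation to the graded structure of the Jacobian algebra $\mathcal{A}_f$ and the location of its socle. First I would record the identity
$$F_k\dmM(f^{1-\alpha})=F_1\doD_X\cdot F_{k-1}\dmM(f^{1-\alpha})+\frac{\so^{\ge\alpha+k}}{f^{k+1}}f^{1-\alpha},\qquad k\ge1,$$
which follows from Theorem \ref{thma} together with the fact that the order filtration on $\doD_X$ is generated in level $1$, i.e. $F_1\doD_X\cdot F_m\doD_X=F_{m+1}\doD_X$: applying $F_1\doD_X$ to the formula for $F_{k-1}$ reproduces every summand of $F_k$ except the top one $F_0\doD_X\cdot(\so^{\ge\alpha+k}/f^{k+1})f^{1-\alpha}$. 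By definition the generating level is then the largest integer $k$ with $F_k\dmM(f^{1-\alpha})\ne F_1\doD_X\cdot F_{k-1}\dmM(f^{1-\alpha})$, that is, the largest $k$ at which the generator term $(\so^{\ge\alpha+k}/f^{k+1})f^{1-\alpha}$ contributes something not already obtained from level $k-1$.

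Next I would perform the reduction modulo the Jacobian ideal. Using the $\doD_X$-action formula of Theorem \ref{thma}, for $b\in\so$ one computes
$$\partial_i\cdot\left(\frac{b}{f^{k}}f^{1-\alpha}\right)=\frac{\partial_ib}{f^{k}}f^{1-\alpha}-(k+\alpha-1)\frac{b\,\partial_if}{f^{k+1}}f^{1-\alpha},$$
and since $k+\alpha-1\ne0$ for every $k\ge1$, this shows that any numerator lying in $\mathcal{J}(f)\cap\so^{\ge\alpha+k}$ is redundant, being absorbed into $F_1\doD_X\cdot F_{k-1}$. As $f$ is weighted homogeneous, $\mathcal{J}(f)$ is $\rho$-graded and one has the weight-respecting decomposition $\so^{\ge\alpha+k}=(\mathcal{J}(f)\cap\so^{\ge\alpha+k})\oplus\mathrm{span}\{v_j:\rho(v_j)\ge\alpha+k\}$ for a homogeneous monomial basis $\{v_j\}$ of $\mathcal{A}_f$. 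Hence, modulo $F_1\doD_X\cdot F_{k-1}$, the new generators at level $k$ are spanned by the classes of $(v_j/f^{k+1})f^{1-\alpha}$ with $\rho(v_j)\ge\alpha+k$, and the generating level equals $\max\{k:\exists\,v_j\text{ with }\rho(v_j)\ge\alpha+k\}$.

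It then remains to locate the top of the spectrum. The Jacobian algebra of an isolated weighted homogeneous singularity is a graded Artinian Gorenstein algebra with one-dimensional socle spanned by the Hessian; computing its socle degree and incorporating the shift $\sum_iw_i$ built into $\rho$ gives $\max_j\rho(v_j)=n-\tilde{\alpha}_f$, where $\tilde{\alpha}_f=\sum_iw_i=\rho(1)$ is the minimal exponent (equivalently, this is the symmetry of the spectrum about $n/2$ with minimum $\tilde{\alpha}_f$). Substituting, the largest admissible $k$ is $\lfloor n-\tilde{\alpha}_f-\alpha\rfloor=[n-\tilde{\alpha}_f-\alpha]$. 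This already yields the upper bound: for $k>[n-\tilde{\alpha}_f-\alpha]$ no basis element survives, so $F_k=F_1\doD_X\cdot F_{k-1}$, and an induction on $l$ (again using $F_1\doD_X\cdot F_l\doD_X=F_{l+1}\doD_X$) gives $F_{k_0+l}=F_l\doD_X\cdot F_{k_0}$ for $k_0=[n-\tilde{\alpha}_f-\alpha]$.

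The main obstacle is the matching lower bound, namely that at level $k_0$ the socle generator is genuinely new, $(v_{\max}/f^{k_0+1})f^{1-\alpha}\notin F_1\doD_X\cdot F_{k_0-1}$. I would settle this on the associated graded for the pole-order filtration. Since $F_{k_0-1}$ has pole order at most $k_0$, the pole-order-$(k_0+1)$ part of $F_1\doD_X\cdot F_{k_0-1}$ is produced only by the terms $\partial_i((b/f^{k_0})f^{1-\alpha})$ computed above, whose leading numerators lie in $\mathcal{J}(f)$. Because $\rho(v_{\max})=n-\tilde{\alpha}_f\ge\alpha+k_0$, the element $(v_{\max}/f^{k_0+1})f^{1-\alpha}$ lies in $F_{k_0}$, yet its leading numerator $v_{\max}$ generates the socle of $\mathcal{A}_f$ and in particular is not in $\mathcal{J}(f)$; hence it cannot lie in $F_1\doD_X\cdot F_{k_0-1}$. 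The delicate point is to exclude cancellations coming from lower-pole-order corrections, which the pole-order grading controls precisely, so the socle class is nonzero in $F_{k_0}/F_1\doD_X\cdot F_{k_0-1}$ and the generating level is exactly $[n-\tilde{\alpha}_f-\alpha]$.
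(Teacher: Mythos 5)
Your proposal is correct and takes essentially the same approach as the paper: the recursion extracted from Theorem \ref{thma}, the absorption of Jacobian-ideal numerators into $F_1\doD_X\cdot F_{k-1}$, the identity $\max_j\Vw(v_j)=n-\tilde{\alpha}_f$, and the lower bound via the fact that every element of $F_1\doD_X\cdot F_{k-1}$, written over $f^{k+1}$, has numerator in $(\partial_1f,\ldots,\partial_nf)$ (Euler's relation $f=\sum_i w_ix_i\partial_if$) while a top-weight basis element of the Jacobian algebra does not, are exactly the ingredients of the paper's proof, where they appear as Corollary \ref{C:B}, Remark \ref{weightsandlct}, and the containment $J_q(D)\subseteq(\partial_1f,\ldots,\partial_nf)$. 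The differences are purely presentational: you re-derive the two cited ingredients inline and phrase the final step on the pole-order associated graded (just make the use of $f\in\mathcal{J}(f)$ explicit there, so that the $(f)$-corrections from lower pole order are seen to land in $\mathcal{J}(f)$), which is the same computation.
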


Another application of Theorem \ref{thma} is to prove the inclusion 
\begin{equation}\label{E:inclusion}
I_k(D)\subseteq I_{k-1}(D),~\forall~k\ge 1
\end{equation}
 (see more details in Corollary \ref{C:hodgeidealseq}). Notice that for any reduced integral divisor $Z$, we have 
$$I_k(Z)\subseteq I_{k-1}(Z),~\forall~ k\ge 1$$
(as proved in \cite[Proposition 13.1]{MP16}), but in general it is not clear whether an inclusion like (\ref{E:inclusion}) should hold for arbitrary $\fQ$-divisors (see \cite[Remark 4.2]{MP18a}).

As a consequence of Corollary \ref{C:B}, we also deduce that the roots of the Bernstein-Sato polynomial can be related to the jumping numbers and coefficients associated to ideals of the form $I_k(cZ)/(\partial f)$ for varking $k$ and $c$; for details see the end of Section \ref{S:proofofa}.

One perspective for studying Hodge ideals, introduced by Saito, is to consider the induced microlocal $V$-filtration on $\so_X$ associated to $f$ (see more details about the $V$-filtration in Section \ref{S:microlocal}). When $D$ is a reduced and integral divisor, Saito showed in \cite{Sai16} that 
$$I_k(D)=\tilde{V}^{k+\alpha}\so_X\mod (f).$$
When we consider an effective $\fQ$-divisor $D=\alpha H$, in \cite[Definition 3.1]{MP18b} the authors define another series of ideal sheaves $\{\tilde{I}_k(D)\}_{k\in\rN}$; see Section \ref{S:microlocal}. In this setting, when $0<\alpha\le 1$, it is easy to see that $\tilde{I}_k(D)=\tilde{V}^{k+\alpha}\so_X$. In the case when $D=\alpha\cdot Z$, where $0<\alpha\le 1$, $Z=\diV(f)$ reduced  and $f\in\so_X(X)$, the authors \cite[Theorem A$'$]{MP18b} obtained a formula for $I_k(D)$ in terms of the $V$-filtration on $\dmM(f^{1-\alpha})$, from which in particular it follows that $I_k(D)=\tilde{I}_k(D)\mod (f)$.  It is natural to wonder to what extent equality holds without modding out by $(f)$. Although $I_0(D)=\tilde{I}_0(D)$ is always true, $I_k(D)$ and $\tilde{I}_k(D)$ are usually not the same for $k\ge 1$ (see, e.g. \cite[Remark (ii) in \S 2.4]{Sai16} for $\alpha=1$; see also \cite[Remark 9.8]{Pop18} for examples of $\fQ$-divisors). However, if $D=\alpha Z$ with $0<\alpha\le1$ and $Z$ is defined by a weighted homogeneous polynomial with an isolated singularity at the origin, we can give a criterion for equality between $I_k(D)$ and $\tilde{I}_k(D)$:

\begin{proposition}\label{P:hodgemicroequality} 
If $D=\alpha Z$, where $0<\alpha\le 1$ and $Z=(f=0)$ is an integral and reduced effective divisor defined by $f$, a weighted homogeneous polynomial with an isolated singularity at the origin, then for any $k\in\rN$, if $I_k(D)=\tilde{I}_k(D)=(\row xn)^m$ for some $m\in\rN$, then $I_{k+1}(D)= \tilde{I}_{k+1}(D)$. \end{proposition}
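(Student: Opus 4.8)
The plan is to upgrade the congruence $I_{k+1}(D)\equiv\tilde I_{k+1}(D)\pmod{(f)}$, which always holds by the comparison of Musta\c{t}\u{a}--Popa \cite{MP18b}, to an honest equality of ideals, by controlling the two ideals in the ``$(f)$-direction''. Write $\mathfrak m=(\row xn)$ for the maximal ideal, and throughout abbreviate $I_j=I_j(D)$ and $\tilde I_j=\tilde I_j(D)=\tilde V^{j+\alpha}\so_X$.

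First I would assemble three facts that hold \emph{without} the monomial hypothesis. (i) The comparison of \cite{MP18b} gives $I_{k+1}+(f)=\tilde I_{k+1}+(f)$. (ii) The general inclusion $I_j\subseteq\tilde I_j$ holds for every $j$ \cite{MP18b}. (iii) Because the Hodge filtration is increasing, $F_k\dmM(f^{1-\alpha})\subseteq F_{k+1}\dmM(f^{1-\alpha})$; writing both sides through the defining relation $F_j\dmM(f^{1-\alpha})=I_j\cdot f^{-(j+1)}f^{1-\alpha}$ and clearing the common factor yields the useful inclusion $f\,I_k\subseteq I_{k+1}$. The analogous property $f\,\tilde I_k\subseteq\tilde I_{k+1}$ for the microlocal filtration is immediate from $f\cdot\tilde V^{c}\so_X\subseteq\tilde V^{c+1}\so_X$.

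With these in hand the argument is short. By (ii) it suffices to prove $\tilde I_{k+1}\subseteq I_{k+1}$. Let $b\in\tilde I_{k+1}$. By (i) we may write $b=a+cf$ with $a\in I_{k+1}$ and $c\in\so$; by (ii) also $a\in\tilde I_{k+1}$, so $cf=b-a\in\tilde I_{k+1}\cap(f)$. The heart of the matter is the intersection identity
\[
\tilde I_{k+1}\cap(f)=f\,\tilde I_k ,
\]
which I will establish below. Granting it, $cf\in f\tilde I_k$, hence $c\in\tilde I_k$ since $f$ is a nonzerodivisor. Now the hypothesis enters: $\tilde I_k=I_k$, so $c\in I_k$, and then $cf\in f\,I_k\subseteq I_{k+1}$ by (iii). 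Therefore $b=a+cf\in I_{k+1}$, as desired.

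The main obstacle is this intersection identity, equivalently the injectivity of multiplication by $f$ on $\operatorname{gr}^{k+\alpha}_{\tilde V}\so_X$; the reverse inclusion $\tilde I_{k+1}\cap(f)\subseteq f\tilde I_k$ is the nontrivial one, as $f\tilde I_k\subseteq\tilde I_{k+1}\cap(f)$ is clear. This is exactly where the assumption $\tilde I_k=\mathfrak m^m$ is essential: in general $f$ need not be injective on the graded pieces of the microlocal $V$-filtration (the obstruction sits at the weights where the monodromy has eigenvalue one, e.g.\ when $\alpha=1$). To prove the identity I would use the explicit description of $\tilde V^\bullet\so_X$ for a weighted homogeneous isolated singularity, where the Euler operator $\chi=\sum_i w_i x_i\partial_i$ is semisimple and the graded pieces are governed by the weight function $\rho$ together with the Jacobian algebra $\mathcal A_f$; because $\mathfrak m^m$ is a power of the maximal ideal, the hypothesis $\tilde V^{k+\alpha}\so_X=\mathfrak m^m$ forces the relevant graded piece to be a sum of $\chi$-weight spaces on which $\cdot f$ is injective. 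I expect this step to demand the finest bookkeeping, since one must translate ``$cf$ has microlocal weight $\ge k+1+\alpha$'' into a statement about the ordinary order of vanishing of $c$, and it is the rigidity of $\mathfrak m^m$ (rather than a general $\tilde I_k$) that makes this translation exact. An alternative, working on the Hodge side, would instead prove the companion identity $I_{k+1}\cap(f)=f\,I_k$ directly from the recursion of Corollary \ref{C:B}: using the Euler relation $f=\sum_i w_i x_i\partial_i f$ and the fact that $\partial_1 f,\dots,\partial_n f$ form a regular sequence (isolated singularity), one reduces $I_{k+1}\cap(f)\subseteq f\,I_k$ to a syzygy computation in which the degree constraint imposed by $\mathfrak m^m$ is precisely what closes the argument.
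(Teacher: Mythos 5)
Your argument rests on a fact that is false: your step (ii), the claimed unconditional inclusion $I_j(D)\subseteq\tilde{I}_j(D)$. What \cite{MP18b} actually provides (Proposition \ref{P:HequalM} in this paper) is only the congruence $I_k(D)=\tilde{I}_k(D)\bmod (f)$, together with the equality $I_0(D)=\tilde{I}_0(D)$; for $k\ge 1$ neither inclusion holds in general. Example \ref{Ex:notequal} of this paper is an explicit counterexample: for $f=x^2+y^5$ and $\tfrac{9}{10}<\alpha\le 1$ one has
$$f\partial_x(x)-\alpha x\partial_xf=(1-2\alpha)x^2+y^5\in I_1(D)\setminus\tilde{I}_1(D),$$
and also $x^2\in\tilde{I}_1(D)\setminus I_1(D)$. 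Your proof invokes (ii) twice in essential ways: first to reduce the proposition to the single inclusion $\tilde{I}_{k+1}(D)\subseteq I_{k+1}(D)$, and second to conclude that $cf=b-a$ lies in $\tilde{I}_{k+1}(D)\cap(f)$; both steps collapse. The failure is structural rather than cosmetic: even granting both intersection identities $I_{k+1}(D)\cap(f)=fI_k(D)$ and $\tilde{I}_{k+1}(D)\cap(f)=f\tilde{I}_k(D)$, the data ``same sum with $(f)$, same intersection with $(f)$'' do not force equality of two ideals (two distinct lines in a plane have the same sum and the same intersection with a third, transverse line), so without some proven inclusion between $I_{k+1}(D)$ and $\tilde{I}_{k+1}(D)$ under the monomial hypothesis, the mod-$(f)$ bootstrap cannot start --- and such an inclusion is a substantive part of the proposition itself, not an input one can quote.

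The second gap is that the identity $\tilde{I}_{k+1}(D)\cap(f)=f\tilde{I}_k(D)$, which you rightly call the heart of the matter, is only a program: the passage from ``$cf$ has microlocal weight $\ge k+1+\alpha$'' to ``$c\in\tilde{V}^{k+\alpha}\so_X$'' is exactly where the monomial hypothesis would have to do real work, and no argument is given. By contrast, the paper's proof bypasses all of this machinery: it compares the two recursions directly --- $I_{k+1}(D)$ via Corollary \ref{C:B} and $\tilde{I}_{k+1}(D)$ via Proposition \ref{P:saitomicrovfil} --- and reduces the proposition to the single identity
$$\sum_{i=1}^n\sum_{a\in(\row xn)^m}\so_X\big(f\partial_ia-(\alpha+k)a\partial_if\big)=(\partial_1f,\ldots,\partial_nf)\,(\row xn)^m,$$
whose forward inclusion follows from the Euler relation $f=\sum_iw_ix_i\partial_if$ (since $x_j\partial_ia\in(\row xn)^m$ whenever $a\in(\row xn)^m$), and whose reverse inclusion is obtained by solving the linear system formed by the elements $L_i(A)=\partial_i(x^Ax_i)f-(\alpha+k)x^Ax_i\partial_if$ for the generators $x^Ax_i\partial_if$; this inversion is precisely where the rigidity of $(\row xn)^m$ enters. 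If you repair your approach by proving, under the hypothesis, both the inclusion $I_{k+1}(D)\subseteq\tilde{I}_{k+1}(D)$ and the intersection identity, you will find that the two proofs amount to the two inclusions of this displayed identity --- i.e., you will have redone the paper's computation in disguise.
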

When $k=0$, we prove that the converse statement is true, i.e., if $I_0(D)=\tilde{I}_0(D)\neq (\row xn)^m $ for any $m\in \rN$, then $I_1(D)\neq \tilde{I}_1(D)$. See details in Proposition \ref{P:firstofconj}. So it seems plausible to make the following:

\begin{conj} \label{Cj:hodgemicro}
If $D=\alpha Z$, where $0<\alpha\le 1$ and $Z=(f=0)$ is an integral and reduced effective divisor defined by $f$, a weighted homogeneous polynomial with an isolated singularity at the origin, then for any $k\in\rN$, $I_{k+1}(D)= \tilde{I}_{k+1}(D)$ if and only if $I_k(D)=\tilde{I}_k(D)=(\row xn)^m$ for some $m\in \rN$. 
\end{conj}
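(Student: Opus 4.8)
The plan is to prove the biconditional by treating its two directions separately. The ``if'' direction --- that $I_k(D)=\tilde{I}_k(D)=(\row xn)^m$ forces $I_{k+1}(D)=\tilde{I}_{k+1}(D)$ --- is exactly Proposition \ref{P:hodgemicroequality}, which is already established for every $k$. Thus the entire content of the conjecture is the converse implication
\[
I_{k+1}(D)=\tilde{I}_{k+1}(D)\ \Longrightarrow\ I_k(D)=\tilde{I}_k(D)=(\row xn)^m\ \text{for some }m,
\]
of which the case $k=0$ is Proposition \ref{P:firstofconj}. I would argue by contraposition: assuming that the pair at level $k$ is \emph{not} a common power of the maximal ideal $\mathfrak{m}:=(\row xn)$ --- either because $I_k(D)\neq\tilde{I}_k(D)$, or because $I_k(D)=\tilde{I}_k(D)$ but this ideal is not of the form $\mathfrak{m}^m$ --- I would produce a witness showing $I_{k+1}(D)\neq\tilde{I}_{k+1}(D)$.

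First I would reduce the comparison at level $k+1$ to a statement about the $(f)$-part. Recall from \cite{MP18b} that $I_j(D)\equiv\tilde{I}_j(D)\pmod{(f)}$ for all $j$, so $I_{k+1}(D)$ and $\tilde{I}_{k+1}(D)$ can differ only inside the ideal $(f)$. Because $f$ is weighted homogeneous with $\rho(f)=(\sum_i w_i)+1$, multiplication by $f$ raises $\rho$ by $1$ and is strict for the microlocal $V$-filtration in the range $k+\alpha>0$; this yields $(f)\cap\tilde{I}_{k+1}(D)=f\cdot\tilde{I}_k(D)$. Granting the inclusion $I_{k+1}(D)\subseteq\tilde{I}_{k+1}(D)$ --- which I would verify by induction on $k$ from $\so^{\ge\lambda}\subseteq\tilde{V}^\lambda\so_X$ together with compatibility of the recursion of Corollary \ref{C:B} with $\tilde{V}$ --- these two facts combine into the clean criterion
\[
I_{k+1}(D)=\tilde{I}_{k+1}(D)\iff f\cdot\tilde{I}_k(D)\subseteq I_{k+1}(D).
\]
Everything then rests on deciding when $f\cdot\tilde{I}_k(D)$ lands in $I_{k+1}(D)$.

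The decisive computational tool is the weighted Euler relation. For a weighted-homogeneous $a\in I_k(D)$ of weighted degree $d_a$ (so $\rho(a)=(\sum_i w_i)+d_a$), using $\sum_i w_ix_i\partial_if=f$ and $\sum_i w_ix_i\partial_i a=d_a\,a$ one finds
\[
\sum_{i=1}^n w_ix_i\bigl(f\partial_i a-(\alpha+k)\,a\,\partial_i f\bigr)=\bigl(d_a-(\alpha+k)\bigr)\,f a,
\]
the left-hand side being an $\so_X$-combination of the generators of $I_{k+1}(D)$ listed in Corollary \ref{C:B}. Hence $fa\in I_{k+1}(D)$ \emph{automatically} whenever $d_a\neq\alpha+k$. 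Consequently the inclusion $f\cdot\tilde{I}_k(D)\subseteq I_{k+1}(D)$ can only fail on the critical weighted degree $d_a=\alpha+k$ (and, when $I_k(D)\neq\tilde{I}_k(D)$, on $\tilde{I}_k(D)\setminus I_k(D)$, where the generator itself is unavailable). The converse therefore reduces to showing that, once $I_k(D)=\tilde{I}_k(D)$ is not a full power $\mathfrak{m}^m$ (or the two ideals already differ), the critical weighted degree contains a weighted-homogeneous $a\in\tilde{I}_k(D)$ with $fa\notin I_{k+1}(D)$.

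The main obstacle, and the heart of the argument, is the \emph{non-membership} $fa\notin I_{k+1}(D)$. Since $I_{k+1}(D)$ and $\tilde{I}_{k+1}(D)$ agree modulo $(f)$, such a statement is a genuine lower bound invisible mod $f$; I expect to prove it by identifying $(f)\cap I_{k+1}(D)$ exactly, showing it equals $f\cdot I_k'(D)$, where $I_k'(D)\subseteq\tilde{I}_k(D)$ is the subideal generated from $I_k(D)$ by the Euler mechanism above, so that $I_k'(D)$ omits precisely the critical-degree monomials that a genuine power $\mathfrak{m}^m$ supplies but a non-power cannot. The second, more structural, task is to show that the critical-degree condition is governed exactly by the $\mathfrak{m}^m$ shape: this is where the hypothesis ``power of the maximal ideal'' enters, and I would carry it out by induction on $k$, using Proposition \ref{P:hodgemicroequality} to propagate the power-of-$\mathfrak{m}$ property downward to level $0$ (where it becomes the explicitly understood condition $\so^{\ge\alpha}=\mathfrak{m}^{m_0}$ analyzed in Proposition \ref{P:firstofconj}), and using the semisimplicity of the monodromy of a weighted-homogeneous isolated singularity to keep the ideals $\tilde{I}_k(D)$ weighted-homogeneous and explicitly describable. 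I expect the exact determination of $(f)\cap I_{k+1}(D)$ in the first task to be the genuinely hard point, the remaining arithmetic of weighted degrees being a direct extension of the $k=0$ analysis.
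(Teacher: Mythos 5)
You should note at the outset that the statement you are proving is stated in the paper as Conjecture \ref{Cj:hodgemicro} and is \emph{left open} there: the paper proves only the ``if'' direction (Proposition \ref{P:hodgemicroequality}, valid for all $k$) and the $k=0$ case of the ``only if'' direction (Proposition \ref{P:firstofconj}). Your proposal correctly identifies this structure and correctly isolates the converse implication as the remaining content, and your weighted Euler computation $\sum_i w_ix_i\bigl(f\partial_ia-(\alpha+k)a\partial_if\bigr)=\bigl(d_a-(\alpha+k)\bigr)fa$ is accurate and is essentially the same mechanism the paper uses in Propositions \ref{P:hodgemicroequality} and \ref{P:firstofconj}. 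But the proposal does not close the gap the paper leaves open, and one of its load-bearing steps is false.

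The concrete error is the inclusion $I_{k+1}(D)\subseteq\tilde{I}_{k+1}(D)$, which you ``grant'' and propose to verify by induction: the paper's own Example \ref{Ex:notequal} refutes it. For $f=x^2+y^5$ and $\frac{9}{10}<\alpha\le 1$ one has $(1-2\alpha)x^2+y^5\in I_1(D)\setminus\tilde{I}_1(D)$ and $x^2\in\tilde{I}_1(D)\setminus I_1(D)$, so \emph{neither} inclusion holds, even in the case relevant to your contrapositive (there $I_0(D)=\tilde{I}_0(D)=(x,y^2)$ is not a power of the maximal ideal). The failure is visible in your own computation: a generator $f\partial_ia-(\alpha+k)a\partial_if$ of critical weighted degree carries $\alpha$-dependent coefficients on its $(f)$-part and need not lie in $\tilde{V}^{k+1+\alpha}\so_X$; this is exactly the phenomenon the conjecture is about, not something that can be assumed away. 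With the inclusion gone, your ``clean criterion'' $I_{k+1}(D)=\tilde{I}_{k+1}(D)\iff f\cdot\tilde{I}_k(D)\subseteq I_{k+1}(D)$ collapses. One could repair the logic by a case split (if the inclusion fails, the contrapositive conclusion holds trivially), but then the entire argument rests on the exact determination of $(f)\cap I_{k+1}(D)$ and the non-membership $fa\notin I_{k+1}(D)$ in the critical degree --- precisely the step you defer as ``the genuinely hard point,'' with the auxiliary strictness claim $(f)\cap\tilde{I}_{k+1}(D)=f\cdot\tilde{I}_k(D)$ also asserted without proof. As it stands, the proposal is a reasonable research program whose proved portion coincides with Propositions \ref{P:hodgemicroequality} and \ref{P:firstofconj}, and whose unproved portion is the conjecture itself.
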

\endgroup

Finally, in Section \ref{S:nondegenerate} we extend Theorem \ref{thma} to the case that $D=\alpha Z$, where $0<\alpha\le 1$ and $Z=(f=0)$ is an integral and reduced effective divisor defined by $f$, a germ of holomorphic function that is convenient and has non-degenerate Newton boundary with respect to the local coordinates.
\section*{Acknowledgement}
I would like to express my sincere gratitude and appreciation to my advisor Mihnea Popa for suggesting the problem, and for the continuous support of this project. Special thanks to Morihiko Saito for patiently answering my questions and explaining to me many ideas of his results. I would like to thank Yajnaseni Dutta, Mircea Musta\c{t}\u{a}, Sebasti\'{a}n Olano, Claude Sabbah, Lei Wu, Stephen Shing-Toung Yau and Huai-Qing Zuo for answering my questions, suggesting useful references and very helpful discussions.
\section{Preliminaries}
\setcounter{theorem}{\thetmp}
\subsection{Filtered $\doD$-modules and Hodge ideals associated with $\mathbb{Q}$-divisors}\label{S:filtereddm}

Let $X$ be a smooth complex variety, and $D$ be an effective $\fQ$-divisor on $X$, locally $D=\alpha H$ with support $Z$ and $H=\diV(h)$ for some nonzero $h\in\so_X(X)$ and $\alpha\in \fQ_{>0}$. We denote $\beta=1-\alpha$.

In this setting one associates $D$ the left $\doD_X$-module $\dmM(h^{\beta}):=\so_X(*H)h^{\beta}$, a rank 1 free $\so_X(*H)$-module with an generator the symbol $h^{\beta}$. The left $\doD_X$-module structure is given via the rule
$$D(wh^{\beta}):=\left(D(w)+w\frac{\beta\cdot D(h)}{h}\right)h^\beta, ~\textrm{for any}~D\in \textrm{Der}_{\fC}\so_X.$$
A good filtration is defined on the $\doD_X$-module $\dmM(h^\beta)$, written as
$(\dmM(h^\beta), F_{\bullet}),$
and is called the Hodge filtration of $\dmM(h^\beta)$ (see \cite[Definition 2.10]{MP18a}).
 \begin{definition}\cite[definition after Proposition 4.1]{MP18a}\label{defHI}
Let $D=\alpha H$ with support $Z$, where $H=\diV(h)$ for some nonzero $h\in\so_X(X)$ and $\alpha\in \fQ_{>0}$. We denote $\beta=1-\alpha$. For any $k\in\rN$, we define $k$th Hodge ideal of $D$, denoted as $I_k(D)$, by
$$F_k\dmM(h^{\beta})=I_k(D)\otimes_{\so_X} \so_X(kZ+H)h^{\beta}.$$ 
\end{definition}

To study $F_\bullet \dmM(h^{\beta})$, it is necessary to consider the following construction. Let
$$\iota: X\hookrightarrow X\times\fC,~x\mapsto \big{(}x, h(x)\big{)}$$
be the closed embedding given by the graph of $h$. For any $\doD_X$-module $\dmM$, we consider the $\doD$-module theoretic direct image
$$\iota_{+}\dmM:=\dmM\otimes_{\fC}\fC[\partial_t]$$
 (see for instance \cite[Example 1.3.5]{HTT08}). With this description, multiplication by $t$ is given by
\begin{equation}\label{E:taction}
t(m\otimes\partial_t^j)=hm\otimes\partial_t^j-jm\otimes\partial_t^{j-1}
\end{equation}
and the action of a derivation $D\in \textrm{Der}_{\mathbb{C}}\mathcal{O}_X$ is given by
\begin{equation}\label{E:daction}
D(m\otimes\partial_t^j)=D(m)\otimes\partial_t^j-D(h)m\otimes\partial_t^{j+1}.
\end{equation}
The Hodge filtration on $\iota_+ \dmM$ is given by: 
$$F_k\iota_+\dmM=\bigoplus_{j=0}^k F_{k-j}\dmM\otimes \partial_t^j$$
 (see \cite[Lemma 3.2.4]{Sai88}).  

In particular, for the special $\doD_X$-module $\dmM(h^\beta)$ ($H$ may not be reduced) mentioned in the beginning, every element in $\iota_{+}\dmM(h^\beta)$ can be written uniquely as a finite sum
$$\sum_{j\ge 0}g_jh^{\beta}\otimes\partial_t^j,~\textrm{with}~g_j\in \mathcal{O}_X(*Z).$$
The Hodge filtration is written as
\begin{equation}\label{E:filtration}
F_k\iota_+\dmM(h^\beta)=\bigoplus_{j=0}^k F_{k-j}\dmM (h^\beta)\otimes \partial_t^j.
\end{equation}
The following lemma is probably well known to the experts, yet we include a proof here for the benefit of the reader. 
\begin{lemma}\label{L:jjproperty}
Let $D=\alpha H$ and $H=\diV(h)$ for some nonzero $h\in\so_X(X)$ and $1-\beta=\alpha\in \fQ_{>0}$.
 Let $j: X\times(\fC-\{0\})\to X\times\fC$ be the natural inclusion. Then:
$$j_*j^*F_k\iota_+\dmM(h^\beta)=\sum_{i=0}^k\mathcal{M}(h^\beta)\otimes\partial_t^i.$$
\end{lemma}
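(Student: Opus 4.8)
The plan is to reduce the statement to a computation of localization along $t=0$, and then to verify two inclusions using the explicit $t$-action \eqref{E:taction}. First I would observe that $j$ is the inclusion of the complement of the principal divisor $(t)$ on $X\times\fC$, and that $\iota_+\dmM(h^\beta)$ is $t$-torsion-free: by \eqref{E:taction} the top $\partial_t$-coefficient of $t\big(\sum_{j}m_j\otimes\partial_t^j\big)$ is $h\,m_{\max}\otimes\partial_t^{\max}$, and multiplication by $h$ is injective on $\dmM(h^\beta)=\so_X(*H)h^\beta$ since $h$ is a unit there; an induction on $\partial_t$-degree then gives injectivity of $t$. Consequently $j_*j^*$ is computed by inverting $t$: writing $N:=F_k\iota_+\dmM(h^\beta)=\bigoplus_{j=0}^kF_{k-j}\dmM(h^\beta)\otimes\partial_t^j$ as in \eqref{E:filtration}, one has $j_*j^*N=N[t^{-1}]=\{y:\ t^\ell y\in N \text{ for some } \ell\ge0\}$ inside $\iota_+\dmM(h^\beta)[t^{-1}]$. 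It then remains to identify $N[t^{-1}]$ with $M:=\sum_{i=0}^k\dmM(h^\beta)\otimes\partial_t^i$.

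For the inclusion $N[t^{-1}]\subseteq M$, I would show that $t$ acts bijectively on $M$. By \eqref{E:taction}, in the basis $\{\partial_t^i\}_{0\le i\le k}$ the operator $t$ is ``upper triangular'' with diagonal given by multiplication by $h$, which is invertible on $\dmM(h^\beta)$; solving the resulting triangular system from the top degree downward shows $t\colon M\to M$ is onto, and it is injective by the previous paragraph. Hence $M$ is stable under $t^{-1}$, so $M[t^{-1}]=M$, and since $N\subseteq M$ we obtain $N[t^{-1}]\subseteq M$.

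For the reverse inclusion it suffices to check that each generator $m\otimes\partial_t^i$ with $m\in\dmM(h^\beta)$ and $0\le i\le k$ lies in $N[t^{-1}]$, i.e.\ that $t^\ell(m\otimes\partial_t^i)\in N$ for $\ell\gg0$. Writing $t=h\cdot\mathrm{id}-L$, where $L$ is the commuting operator $m\otimes\partial_t^j\mapsto j\,m\otimes\partial_t^{j-1}$, the binomial expansion gives $t^\ell(m\otimes\partial_t^i)=\sum_{s=0}^{\min(\ell,i)}(-1)^s\binom{\ell}{s}\,i(i-1)\cdots(i-s+1)\,h^{\ell-s}m\otimes\partial_t^{i-s}$, so every $\partial_t$-coefficient is a constant multiple of $h^{\ell-s}m$ with $\ell-s\ge\ell-i$. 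The key input is that sufficiently high powers of $h$ push $\dmM(h^\beta)$ into $F_0\dmM(h^\beta)$: since $F_0\dmM(h^\beta)=I_0(D)\otimes_{\so_X}\so_X(H)h^\beta$ and both $I_0(D)$ (cosupported on $Z\subseteq\{h=0\}$) and the twist by $H=\diV(h)$ become trivial after inverting $h$, we have $\dmM(h^\beta)=\so_X(*H)\cdot F_0\dmM(h^\beta)$, whence $h^pm\in F_0\dmM(h^\beta)$ for $p\gg0$. Taking $\ell$ large, every coefficient above lies in $F_0\dmM(h^\beta)\subseteq F_{k-i+s}\dmM(h^\beta)$ (using $k-i+s\ge0$), which is exactly the condition for $t^\ell(m\otimes\partial_t^i)$ to belong to $N$. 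This proves $M\subseteq N[t^{-1}]$ and hence the lemma.

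I expect the main obstacle to be this last step: the bookkeeping that places each coefficient in the correct filtration level $F_{k-i+s}\dmM(h^\beta)$, which rests on controlling $F_0\dmM(h^\beta)$ and on the fact that multiplication by high powers of $h$ is ``contracting'' for the Hodge filtration. The torsion-freeness reduction and the triangularity argument are routine by comparison.
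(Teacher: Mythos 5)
Your proof is correct and takes essentially the same approach as the paper's: both identify $j_*j^*$ with localization at $t$, prove the inclusion $\subseteq$ by inverting the triangular $t$-action on $\sum_{i=0}^k\dmM(h^\beta)\otimes\partial_t^i$ (the diagonal being multiplication by $h$, which is invertible on $\dmM(h^\beta)$), and prove $\supseteq$ by applying a high power of $t$ to generators and using that a power of $h$ lies in $I_0(D)$, so that all resulting $\partial_t$-coefficients fall into $F_0\dmM(h^\beta)\subseteq F_{k-j}\dmM(h^\beta)$. The paper's version of your last step is the explicit formula for $t^{m+s}\bigl(h^{-m}h^\beta\otimes\partial_t^s\bigr)$ combined with the multiplier-ideal inclusion $(h)\subseteq I_0(H)$, which is the same mechanism as your observation that $h^p\dmM(h^\beta)\subseteq F_0\dmM(h^\beta)$ for $p\gg 0$.
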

\begin{proof}Let $U=X\times(\fC-\{0\})$. Since $j:U\to X\times \fC$ is the natural inclusion, we know
$$j_*j^*F_k\iota_+\dmM(h^\beta)=\{x\in\iota_+\dmM(h^\beta):x|_U\in F_k\iota_+\dmM(h^\beta)\}.$$
This means, for any $x\in j_*j^*F_k\iota_+\dmM(h^\beta)$, locally, there exists $p\ge 0$ such that $t^p(x)\in F_k\iota_+\dmM(h^\beta)$.
So by equation (\ref{E:filtration}) we have
 $$t^p(x)=\sum_{j=0}^k x_j\otimes \partial_t^j,~\textrm{where}~x_j\in F_{k-j}\dmM(h^\beta).$$
 Then by equation (\ref{E:taction}) we can easily get the inverse formula for the $t$ action, so we obtain 
 $$x\in\sum_{i=0}^k\dmM(h^\beta)\otimes\partial_t^i.$$
Therefore 
$$j_*j^*F_k\iota_+\dmM(h^\beta)\subseteq\sum_{i=0}^k\mathcal{M}(h^\beta)\otimes\partial_t^i.$$

Conversely, $$\left\{\frac{1}{h^m}h^\beta\otimes\partial_t^s\right\}_{m\ge 0,~0\le s\le k}$$ generates $\sum\limits_{i=0}^k\mathcal{M}(h^\beta)\otimes\partial_t^i$. By equation (\ref{E:taction}) we have, 
$$
t^{m+s}\left(\frac{1}{h^{m}}h^\beta\otimes\partial_t^s\right)=\sum_{j=0}^s\frac{h^{k+1}}{h^{k-j+1}}{m+s\choose s-j}\frac{s!}{j!}h^\beta\otimes\partial_t^j.\\
$$
Now, using the definition of the multiplier ideal $\mathscr{I}(B)$ of a $\fQ$-divisor $B$ (see \cite[Definition 9.2.1]{Laz04}), we have:
  $$\so_X(-H)=g_*\so_Y(-g^*H)\subset g_*\so_Y(K_{Y/X}-[(1-\epsilon)g^*H])=\mathscr{I}((1-\epsilon)H)=I_0(H),$$
  hence $(h)\subseteq I_0(H)$, which leads to $h^{k+1}\in I_{k-j}(H)$ for $0\le j\le k$ since 
  $$F_0\so_X(*H)\subset \cdots\subset F_{k}\so_X(*H)\subset F_{k+1}\so_X(*H)\cdots$$
  is an increasing filtration. Thus for any $m\ge 0$ and $0\le s\le k$, we obtain $t^{m+s}(\frac{1}{h^{m}}h^\beta\otimes\partial_t^s)\in F_p\iota_+\dmM(h^\beta)$. Therefore 
  $$j_*j^*F_k\iota_+\dmM(h^\beta)\supseteq\sum_{i=0}^k\mathcal{M}(h^\beta)\otimes\partial_t^i,$$
  and we finish the proof.
\end{proof}
\subsection{The rational $V$-filtrations on $\big{(}\so_X(*H),F\big{)}$ and $\big{(}\dmM(h^{1-\alpha}), F\big{)}$.}\label{S:Vfiltration}
For any complex manifold $Y$, let $X$ be a smooth hypersurface on $Y$, with a local coordinate $t$.

Malgrange \cite{Mal83} proved that $\iota_+\so_X$ admits an integral $V$-filtration along $X\times\{0\}$. Kashiwara \cite{Kas83} extended the result to the case $\iota_+\dmM$, for $\dmM$ regular holonomic, while Saito \cite[Definition 3.1.1]{Sai88} considered the more general rational $V$-filtration defined as follows:
 \begin{definition}\label{Def:rVfil}
Let $M$ be a coherent $\doD_{Y}$-module. A decreasing filtration ($V^{\gamma} M)_{\gamma\in\fQ}$ on $M$, indexed by $\fQ$, is called a rational $V$-filtration along $X$ if the following conditions are satisfied:
\begin{enumerate}
\item Each $V^{\gamma}M$ is a coherent module over $\doD_X[t,\partial_tt]$ and $\bigcup\limits_{\gamma\in\fQ}V^\gamma M=M$.

\item $t\cdot V^{\gamma}M\subseteq V^{\gamma+1}M$, for any $\gamma\in\fQ$ (with equality if $\gamma>0$).

\item $\partial_t\cdot V^{\gamma}M\subseteq V^{\gamma-1}M$, for any $\gamma\in\fQ$.
\item For every $\gamma\in\fQ$, if we put $V^{>\gamma}M=\cup_{\gamma'>\gamma}V^{\gamma'}M$, then $\partial_tt+\gamma$ is nilpotent on 
$$\textrm{Gr}_V^\gamma M:=V^{\gamma}M/V^{>\gamma}M.$$
\end{enumerate}
\end{definition}
Saito \cite[Definition 3.2.1]{Sai88} further consider the rational $V$-filtration compatible with the increasing filtration $F$ defined as follows:
\begin{definition}
Let $(M,F)$ be a filtered coherent $\doD_Y$-module, i.e.~$\textrm{Gr}^F_\bullet$ are coherent $\textrm{Gr}^F_\bullet\doD_Y$ module. We say that $(M,F)$ admits a rational $V$-filtration along $X$, if:
\begin{itemize}
\item $M$ admits a rational $V$-filtration along $X$ as defined in Definition \ref{Def:rVfil}.

\item $t\cdot F_pV^\alpha M=F_pV^{\alpha+1}M$ for any $\alpha>0$.

\item $\partial_t\cdot F_p\textrm{Gr}_V^\alpha M=F_{p+1}\textrm{Gr}_V^{\alpha-1}$ for any $\alpha<1$.
\end{itemize}
\end{definition}

The rational $V$-filtration on $(M,F)$ defined as above exists in the case when $(M,F)$ underlies a mixed Hodge module. It is well known that when the $V$-filtration exists, it is unique. In the paper, we will mainly be concerned with the rational $V$-filtration on filtered $\doD_{X\times\fC}$-modules of the form $(\iota_+\so_X, F_\bullet)$ or $\big{(}\iota_+\dmM(h^\beta), F_\bullet\big{)}$. 
 
 First, we recall a useful result to compute the Hodge filtration on a $\doD_{X\times\fC}$-module from its restriction on $U=X\times (\fC-\{0\})$ as follows:
\begin{lemma} \cite[Remark 3.4]{MP18b}\label{L:regular quasiunipotent property} Let $D=\alpha H$ and $H=\diV(h)$ for some nonzero $h\in\so_X(X)$ and $1-\beta=\alpha\in \fQ_{>0}$.
 Let $j: X\times(\fC-\{0\})\to X\times\fC$ be the natural inclusion. We have
$$F_k\iota_+\dmM(h^\beta)=\sum_{i\ge 0}\partial_t^i\big{(}V^0\iota_+\dmM(h^\beta)\cap j_*j^*F_{k-i}\iota_+\dmM(h^\beta)\big{)}\hspace{10pt}\textrm{for all}~k,$$
where $j: U\hookrightarrow X\times\fC$ is the inclusion.
\end{lemma}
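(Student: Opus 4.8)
The plan is to prove the asserted equality by establishing the two inclusions separately, using only Lemma~\ref{L:jjproperty} together with the two compatibility axioms relating $F$ and the rational $V$-filtration on $\iota_+\dmM(h^\beta)$. Two preliminary observations organize the argument. First, by Lemma~\ref{L:jjproperty} we have $j_*j^*F_m\iota_+\dmM(h^\beta)=\sum_{l=0}^m\dmM(h^\beta)\otimes\partial_t^l$, which vanishes for $m<0$; hence the sum on the right-hand side is finite, running only over $0\le i\le k$. Second, the relevant tools are the compatibilities $t\cdot F_pV^\gamma=F_pV^{\gamma+1}$ for $\gamma>0$ and $\partial_t\cdot F_p\textrm{Gr}_V^\gamma=F_{p+1}\textrm{Gr}_V^{\gamma-1}$ for $\gamma<1$, together with the fact that $\partial_t$ shifts the Hodge filtration by one, $\partial_t F_p\subseteq F_{p+1}$.

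For the inclusion $\supseteq$, it suffices to show $V^0\cap j_*j^*F_{k-i}\subseteq F_{k-i}$ for each $i$, since applying $\partial_t^i$ then lands in $F_k$. I would prove this sub-inclusion by multiplying a given $x\in V^0\cap j_*j^*F_{k-i}$ by a large power $t^p$: by the proof of Lemma~\ref{L:jjproperty} one has $t^px\in F_{k-i}$ for $p\gg0$, while $x\in V^0$ forces $t^px\in V^p\subseteq V^{>0}$, so $t^px\in F_{k-i}V^p$. Since $t$ acts injectively on $\iota_+\dmM(h^\beta)$ and is bijective $F_{k-i}V^\gamma\to F_{k-i}V^{\gamma+1}$ for $\gamma>0$, I can invert $t$ repeatedly to descend from $V^p$ down to $V^1$ and conclude $tx\in F_{k-i}V^1$, and then cross the final step to $x\in F_{k-i}$.

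For the inclusion $\subseteq$, which is the heart of the matter, I would argue by induction on $k$. Given $x\in F_k\iota_+\dmM(h^\beta)$, I would examine its classes in the graded pieces $\textrm{Gr}_V^\gamma$ and use the surjectivity statement $\partial_t\cdot F_{k-1}\textrm{Gr}_V^\gamma=F_k\textrm{Gr}_V^{\gamma-1}$ for $\gamma<1$ to write $x$, modulo terms of strictly smaller $V$-index and modulo $F_{k-1}$, in the form $\partial_t(y)$ with $y$ a section of $V^0$ lying in $F_{k-1}$ generically, i.e. $y\in V^0\cap j_*j^*F_{k-1}$. Subtracting $\partial_t y$ and applying the inductive hypothesis to the remainder, which now sits at a lower Hodge or $V$-level, would close the induction.

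The main obstacle will be the $\subseteq$ direction, and within it the behavior at the boundary $V$-indices $\gamma=0$ and $\gamma=1$, where $t$ and $\partial_t$ fail to be isomorphisms on $\textrm{Gr}_V$; the same boundary crossing at $\gamma=0$ is also the one delicate point in the $\supseteq$ direction above. Arranging the lifts $y$ to land simultaneously in $V^0$ and in $j_*j^*F_{k-1}$ requires the strictness of the Hodge filtration with respect to $V$, which is exactly the input guaranteed by the hypothesis that $(\iota_+\dmM(h^\beta),F_\bullet)$ underlies a mixed Hodge module, so that the compatible rational $V$-filtration exists and is unique. I expect the careful bookkeeping of these boundary terms, rather than any single hard idea, to be the most technical part of the argument.
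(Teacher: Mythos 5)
The paper gives no proof of this lemma at all: it is quoted from \cite[Remark 3.4]{MP18b}, and the text following it merely observes that the general result of Saito \cite[Remark 3.2.3]{Sai88} applies because $(\iota_+\dmM(h^\beta),F_\bullet)$ is a filtered direct summand of a mixed Hodge module. So your attempt is necessarily a different route: an actual argument from the compatibility axioms. Its skeleton --- two separate inclusions, $t$-descent for $\supseteq$, and induction on $k$ using the surjectivity of $\partial_t$ on $\textrm{Gr}_V$ for $\subseteq$ --- is the right one, and the $\subseteq$ half can indeed be completed as you sketch: every lifting there occurs at an index $\gamma_0<0$, where the axiom $\partial_t\cdot F_{k-1}\textrm{Gr}_V^{\gamma_0+1}=F_{k}\textrm{Gr}_V^{\gamma_0}$ is available (since $\gamma_0+1<1$), and the process terminates by discreteness of the $V$-jumps together with $F_p=0$ locally for $p\ll 0$; the leftover term lands in $F_k\cap V^0\subseteq V^0\cap j_*j^*F_k$, and the lifts $y\in F_{k-1}$ are handled by the inductive hypothesis at level $k-1$.

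The genuine gap is exactly at the step you yourself call ``the final step'' of the $\supseteq$ direction. Your descent gives $tx\in F_{k-i}\cap V^1$, and to conclude $x\in F_{k-i}$ you need $t\cdot(F_{k-i}\cap V^0)=F_{k-i}\cap V^1$; you justify this by strictness ``guaranteed by the hypothesis that $(\iota_+\dmM(h^\beta),F_\bullet)$ underlies a mixed Hodge module, so that the compatible rational $V$-filtration exists and is unique.'' That principle is false: the axioms give the equality only for $\gamma>0$, and underlying a mixed Hodge module does not extend it to $\gamma=0$. For instance, take $M=\iota_+\so_X$ with $f$ singular and reduced (this underlies a pure Hodge module and carries its unique compatible rational $V$-filtration): here $F_0M=\so_X\otimes 1$ and $t$ acts on it by multiplication by $f$, so $t\cdot(F_0\cap V^0)\subseteq (f)\otimes 1$, whereas $F_0\cap V^1=\mathscr{J}(f^{1-\epsilon})\otimes 1$ strictly contains $(f)\otimes 1$; the crossing at $\gamma=0$ fails. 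What makes the step true for $\iota_+\dmM(h^\beta)$ is not the mixed Hodge module package but the fact that $\dmM(h^\beta)=\so_X(*H)h^\beta$ is already localized along $H$: this is precisely the second assertion of Lemma \ref{L:tbijection} with $\gamma=0$, i.e.\ \cite[Lemma 4.5]{MP18b}. If you replace the appeal to strictness by an appeal to that lemma (plus the easy observation that $t$ is injective on $\iota_+\dmM(h^\beta)$ because $\dmM(h^\beta)$ is $h$-torsion free), your $\supseteq$ argument closes; as written, the one step on which the inclusion hinges is supported by an assertion that is not true in the generality you invoke.
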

The above lemma is a direct application of a more general result proved by M. Saito (see \cite[Remark 3.2.3]{Sai88}), which applies since $\big{(}\iota_+\dmM(h^\beta),F_\bullet\big{)}$ is a filtered direct summand of a mixed Hodge module. Combining this with Lemma \ref{L:jjproperty}, we get:
\begin{equation}\label{firststep}
F_k\iota_+\dmM(h^\beta)=\sum_{i\ge 0}\partial_t^i\left(V^0\iota_+\dmM(h^\beta)\cap \sum_{i=0}^k\dmM(h^\beta)\otimes \partial_t^i\right).
\end{equation}

The following result is also related to the fact that $\iota_+\dmM(h^\beta)$ is a filtered direct summand of a mixed Hodge module. 
\begin{lemma} \cite[Lemma 4.5]{MP18b} \label{L:tbijection}
For every $\gamma\in\fQ$, we have
$$t\cdot V^{\gamma}\iota_+\dmM(h^\beta)=V^{\gamma+1}\iota_+\dmM(h^\beta).$$
Moreover, for every $k\in\mathbb{Z}$ and every $\gamma\ge 0$, we have
$$t\cdot V^{\gamma}F_k\iota_+\mathcal{M}(h^\beta)=V^{\gamma+1}F_k\iota_+\mathcal{M}(h^{\beta}).$$
\end{lemma}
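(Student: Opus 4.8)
The plan is to treat the two assertions separately: I would derive the first (bijectivity of $t$ on each $V^\gamma$) from a mixture of the explicit description of $\iota_+\dmM(h^\beta)$ and the formal axioms of the rational $V$-filtration, and then bootstrap to the $F$-compatible statement, where the only genuinely new content beyond the definition of a compatible $V$-filtration is the boundary case $\gamma=0$. For the first equality the inclusion $t\cdot V^\gamma\iota_+\dmM(h^\beta)\subseteq V^{\gamma+1}\iota_+\dmM(h^\beta)$ is axiom (2), so only surjectivity needs work. The key observation, special to the localized module $\dmM(h^\beta)=\so_X(*H)h^\beta$, is that $t$ acts bijectively on all of $\iota_+\dmM(h^\beta)$. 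Writing a general element as a finite sum $\sum_j g_jh^\beta\otimes\partial_t^j$ with $g_j\in\so_X(*H)$ and using (\ref{E:taction}), one computes $t(\sum_j g_jh^\beta\otimes\partial_t^j)=\sum_j\big(hg_j-(j+1)g_{j+1}\big)h^\beta\otimes\partial_t^j$. Since $\so_X(*H)=\so_X[h^{-1}]$ is a domain in which $h$ is invertible, the recursion $hg_j=(j+1)g_{j+1}$ forces $g_j\equiv 0$ (inspecting the top index), giving injectivity; and the same recursion can be solved from the top index downward, dividing by $h$ at each step, giving surjectivity. Hence $t$ is invertible on $\iota_+\dmM(h^\beta)$.

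To pass from whole-module bijectivity to $t\cdot V^\gamma=V^{\gamma+1}$ for every $\gamma$, I would show $t$ is injective on each graded piece $\mathrm{Gr}_V^\gamma$ and then run an order argument. For $\gamma\neq 0$, axiom (4) gives $\partial_t t=-\gamma+N$ with $N$ nilpotent on $\mathrm{Gr}_V^\gamma$, so $\partial_t t$ is invertible there and $t$ is injective on $\mathrm{Gr}_V^\gamma$. For $\gamma=0$ I would bootstrap from whole-module injectivity: if $y\in V^0$ has $ty\in V^{>1}$, then $ty\in V^{\gamma'}=t\,V^{\gamma'-1}$ for some $\gamma'>1$ by the equality in axiom (2), whence $y\in V^{\gamma'-1}\subseteq V^{>0}$ by injectivity, so $t$ is injective on $\mathrm{Gr}_V^0$ as well. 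Granted injectivity on all graded pieces, take $x\in V^{\gamma+1}$ and let $y$ be its unique preimage; if $\delta$ is the $V$-order of $y$ then $t\bar y\neq 0$ forces $\mathrm{ord}_V(ty)=\delta+1$, and since $ty=x\in V^{\gamma+1}$ we get $\delta+1\geq\gamma+1$, i.e. $y\in V^\delta\subseteq V^\gamma$. This yields the reverse inclusion and completes the first statement.

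For the second equality I interpret $V^\gamma F_k$ as $V^\gamma\cap F_k$ with $F_k=F_k\iota_+\dmM(h^\beta)$. The inclusion $t(V^\gamma\cap F_k)\subseteq V^{\gamma+1}\cap F_k$ is immediate from axiom (2) together with $t\in F_0\doD_X$, so that $tF_k\subseteq F_k$. For the reverse inclusion with $\gamma>0$ there is nothing to prove: this is exactly the compatibility $t\cdot F_pV^\alpha=F_pV^{\alpha+1}$ built into the definition of a compatible rational $V$-filtration. The entire remaining content therefore lies in the boundary case $\gamma=0$: given $x\in V^1\cap F_k$, its unique preimage $y=t^{-1}x\in V^0$ must be shown to lie in $F_k$.

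I expect this boundary case to be the main obstacle, and it is genuinely Hodge-theoretic rather than formal. Indeed, writing $x=\sum_j m_j\otimes\partial_t^j$ with $m_j\in F_{k-j}\dmM(h^\beta)$ and solving $hn_j-(j+1)n_{j+1}=m_j$ from the top, the first step $n_k=m_k/h$ divides by $h$, which in general raises the Hodge level; so $y\in F_k$ is not forced by the $F$-data alone and must instead be enforced by the constraint $x\in V^1$. The way I would secure it is to use that $\big(\iota_+\dmM(h^\beta),F_\bullet\big)$ is a filtered direct summand of a mixed Hodge module, so that the Hodge filtration is strictly compatible with the $V$-filtration and $t$ is $F$-strict up to and including the index $\gamma=0$; this is precisely where the localized nature of $\dmM(h^\beta)$, which removes the vanishing-cycle obstruction at $\mathrm{Gr}_V^0$, becomes essential. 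Combining this strictness with the bijectivity of $t$ from the first part produces the required $F$-filtered preimage, giving $t(V^0\cap F_k)=V^1\cap F_k$ and finishing the proof.
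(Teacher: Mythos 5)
The paper itself never proves this lemma: it is imported verbatim from [MP18b, Lemma 4.5], so your attempt has to be judged on its own merits rather than against an argument in the text. Your proof of the first equality is correct and essentially complete: the computation showing that $t$ acts bijectively on $\iota_+\dmM(h^\beta)$ (solving $hg_j-(j+1)g_{j+1}=m_j$ downward from the top index, using that $h$ is invertible in $\so_X(*H)$), the invertibility of $\partial_t t=-\gamma+N$ on $\mathrm{Gr}_V^{\gamma}$ for $\gamma\neq 0$, the bootstrap of injectivity to $\mathrm{Gr}_V^{0}$ via whole-module injectivity, and the order argument all work. The one thing you use silently is that the $V$-filtration is discrete, so that a nonzero element has a well-defined $V$-order; this is standard for modules underlying mixed Hodge modules, but it should be said.

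The gap is exactly at the point you yourself flag as the main obstacle, the case $\gamma=0$ of the second equality, and your resolution of it assumes what is to be proved. The assertion that ``the Hodge filtration is strictly compatible with the $V$-filtration and $t$ is $F$-strict up to and including the index $\gamma=0$'' is not a citable fact: strict specializability (the compatibility conditions recorded in Section 2.2 of this paper) gives $t$-strictness only for $\gamma>0$ and $\partial_t$-strictness only for $\gamma<1$, and the index-$0$ statement is genuinely false for non-localized modules --- already for $\iota_+\so_X$ with any nonzero $h$, the map $\mathrm{var}\colon \mathrm{Gr}_V^{0}\to\mathrm{Gr}_V^{1}$ induced by $t$ is injective but not surjective (since $\so_X\neq\so_X(*H)$), so even $t\cdot V^0=V^1$ fails there. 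The precise input you need, and which your appeal to ``the localized nature'' gestures at without supplying, is that because $\iota_+\dmM(h^\beta)$ is a filtered direct summand of the localization $j_*j^*$ of a mixed Hodge module along $t=0$, the morphism $\mathrm{var}$ is an isomorphism of mixed Hodge modules (up to Tate twist), hence a filtered isomorphism: $t(F_k\mathrm{Gr}_V^{0})=F_k\mathrm{Gr}_V^{1}$. Moreover, even granting this, graded strictness does not by itself ``produce the required $F$-filtered preimage'': it only yields some $y'\in V^0\cap F_k$ with $x-ty'\in V^{>1}\cap F_k$; one must then invoke the definitional compatibility at positive indices to write $x-ty'=ty''$ with $y''\in V^{>0}\cap F_k$, and finally the injectivity of $t$ to conclude $t^{-1}x=y'+y''\in V^0\cap F_k$. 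This two-step descent, together with a precise justification of the strictness of $\mathrm{var}$, is missing from your sketch, so the half of the lemma that carries its real Hodge-theoretic content is not established.
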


We will also make use of the following technical result:
\begin{lemma} \cite[Lemma 3.1.7]{Sai88} \label{L:localizationHM}
Let $u: M\to N$ be a morphism of coherent $\doD_X$-modules equipped with filtration $V$. Let $X^*:=X\times(\fC-\{0\})$. If $u|_{X^*}: M|_{X^*}\to N|_{X^*}$ is an isomorphism, then $u: V^\alpha M\cong V^\alpha N$ for all $\alpha>0$.
\end{lemma}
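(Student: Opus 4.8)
The plan is to analyze $\ker u$ and $\operatorname{coker}u$ and to exploit that multiplication by $t$ is bijective on the strictly positive part of the $V$-filtration. First I would set $K=\ker u$ and $C=\operatorname{coker}u$, both coherent $\doD_X$-modules. Since $u|_{X^*}$ is an isomorphism, $K|_{X^*}=0$ and $C|_{X^*}=0$, so $K$ and $C$ are supported on $\{t=0\}$; in particular every local section of $K$ or of $C$ is annihilated by a power of $t$. Because the rational $V$-filtration is unique (Definition~\ref{Def:rVfil}), every $\doD_X$-linear map is compatible with it, so that $u(V^\alpha M)\subseteq V^\alpha N$ and moreover $u$ is strict, i.e.\ $u(M)\cap V^\gamma N=u(V^\gamma M)$ for all $\gamma$. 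The $t$-torsionness of $K,C$ and this strictness are the two structural inputs.

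Next I would establish the key mechanism: for $\alpha>0$ the map $t\colon V^\alpha\to V^{\alpha+1}$ is bijective, for both $M$ and $N$. Surjectivity is exactly the equality asserted in condition~(2) of Definition~\ref{Def:rVfil}. For injectivity, note that on $\textrm{Gr}_V^\gamma$ the operator $\partial_tt+\gamma$ is nilpotent, so $\partial_tt$ acts with the single generalized eigenvalue $-\gamma$, which is nonzero when $\gamma>0$; hence $\partial_tt$ is invertible on $\textrm{Gr}_V^\gamma$. If $\bar m\in\textrm{Gr}_V^\gamma$ satisfies $t\bar m=0$, then $\partial_tt\,\bar m=0$ and so $\bar m=0$, proving $t$ injective on each $\textrm{Gr}_V^\gamma$ with $\gamma>0$. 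Since the filtration is exhaustive, separated and has discrete jumps, injectivity propagates to $V^\alpha$ itself: a nonzero $m\in V^\alpha M$ has a well-defined top index $\gamma\ge\alpha>0$, and $tm=0$ would give $t\bar m=0$ in $\textrm{Gr}_V^{\gamma+1}$, a contradiction.

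With this in hand both halves follow. For injectivity of the restricted map $V^\alpha u$: if $m\in V^\alpha M$ with $u(m)=0$ then $m\in K$, so $t^pm=0$ for some $p$, and $t$-injectivity on $V^\alpha M$ forces $m=0$. For surjectivity: given $n\in V^\alpha N$, the $t$-torsionness of $C$ yields $p$ with $t^pn=u(m)$ for some $m\in M$; by strictness $u(m)\in V^{\alpha+p}N\cap u(M)=u(V^{\alpha+p}M)$, so $t^pn=u(m')$ with $m'\in V^{\alpha+p}M$; writing $m'=t^pm''$ with $m''\in V^\alpha M$ via the bijection $t^p\colon V^\alpha M\xrightarrow{\sim}V^{\alpha+p}M$, and cancelling $t^p$ from $t^pn=t^pu(m'')$ by $t$-injectivity on $V^\alpha N$, gives $n=u(m'')\in u(V^\alpha M)$.

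The main obstacle is the strictness of $u$ with respect to the canonical $V$-filtration, which is exactly what powers the surjectivity step; it rests on the uniqueness of the rational $V$-filtration together with the fact that kernels, images and cokernels inherit their own canonical $V$-filtrations. I would also take care with the passage from the graded pieces to $V^\alpha$, which uses that the filtration is separated ($\bigcap_\gamma V^\gamma=0$) with locally finitely many jumps; these are standard for coherent modules but must be invoked explicitly. Finally, I would emphasize that the hypothesis $\alpha>0$ is essential: it is precisely what makes $t$ bijective and thereby screens off the $t$-torsion coming from $K$ and $C$, which in general does survive in $V^{\le 0}$ and obstructs the conclusion there.
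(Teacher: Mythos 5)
The paper does not actually prove this lemma: it is quoted directly from Saito \cite{Sai88} (Lemme 3.1.7), so there is no internal proof to compare yours against. Judged on its own merits, your argument is correct, and it is essentially a reconstruction of the standard (Saito's) proof: $K=\ker u$ and $C=\operatorname{coker}u$ are coherent and vanish on $X^*$, hence are $t$-power-torsion; $t$ is bijective on $V^{\gamma}$ for $\gamma>0$ (surjectivity is the equality clause in condition (2) of Definition \ref{Def:rVfil}, injectivity follows from nilpotency of $\partial_tt+\gamma$ on $\textrm{Gr}_V^{\gamma}$); and strictness of $u$ then screens the torsion out of the positive part of the filtration. Saito's own argument is the same in substance, phrased module-theoretically ($V^{\gamma}K=V^{\gamma}C=0$ for $\gamma>0$, plus strict exactness) rather than as your element chase; the two are interchangeable. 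Be aware, though, that your two ``structural inputs'' carry most of the weight and are not one-liners. Strict compatibility of arbitrary morphisms with the $V$-filtration does not follow from uniqueness quite as quickly as your phrasing suggests: it is itself a separate lemma of Saito, preceding the one at hand in the same section of \cite{Sai88}, and its proof requires checking that the filtrations induced on $\ker u$, $\operatorname{im}u$ and $\operatorname{coker}u$ satisfy \emph{every} axiom of Definition \ref{Def:rVfil} --- in particular coherence over $\doD_X[t,\partial_tt]$ and the equality $t\cdot V^{\gamma}=V^{\gamma+1}$ for $\gamma>0$ for the filtration induced on the image, which is roughly as delicate as the statement you are deducing from it. Likewise, promoting injectivity of $t$ from the graded pieces to $V^{\alpha}$ itself uses discreteness and separatedness ($\bigcap_{\gamma}V^{\gamma}M=0$) of the filtration, neither of which is listed among the axioms of Definition \ref{Def:rVfil}; both hold in this setting but must be imported from the general theory. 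Since you flag these dependencies explicitly, I would call them acknowledged appeals to the literature rather than gaps --- which is consistent with the paper's own treatment of the lemma as a citation --- but in a self-contained write-up they are exactly where the real work lies.
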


When we work with a $\fQ$-divisor $D$, locally $D=\alpha\cdot H$ where $H=\diV(h)$ and $0<\alpha\le1$, a key point is to express the $V$-filtration on $\iota_+\dmM(h^\beta)$ in terms of the more common one on $\iota_+\so_X(*H)$. This was done in the following:
\begin{proposition} \cite[Proposition 2.6 and Proposition 2.8]{MP18b}
There is an $\so_X(*H)$-linear isomorphism $\Phi: \iota_{+}\dmM(h^\beta)\to \iota_+\so_X(*H)$ such that
$$\Phi\left(\sum_{i=0}^k\phi_ih^{\beta}\otimes \partial_t^i\delta\right)=\sum_{i=0}^kg_i\otimes\partial_t^i\delta,$$
where
$$\frac{\phi_i}{h^i}=\sum_{j=i}^k{j\choose i}Q_{j-i}(-\beta)\frac{g_j}{h^j},$$
with 
\begin{equation}\label{E:qk}
Q_k(\alpha)=\begin{cases}
\alpha(\alpha+1)\cdots(\alpha+k-1),~\textrm{if}~k\ge 1;\\
1,~\textrm{if}~k=0.\\
\end{cases}
\end{equation}
In particular, we have
$$\Phi\big{(}V^\gamma\iota_+\dmM(h^\beta)\big{)}=V^{\gamma-\beta}\iota_+\so_X(*H)\hspace{10pt}\textrm{for every}~\gamma\in\fQ.$$ \label{Pr: isobeta}
\end{proposition}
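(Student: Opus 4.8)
The plan is to construct $\Phi$ by hand, verify that it is an $\so_X(*H)$-linear isomorphism, and then deduce its effect on $V$-filtrations from the way it interacts with the operators $t$ and $\partial_t t$, invoking uniqueness of the rational $V$-filtration.

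First I would produce $\Phi$. The cleanest motivation is to work over $U=X\times(\fC-\{0\})$, where $t$ is invertible and the graph relation (\ref{E:taction}) forces $t$ to act as $h$ on the support of $\iota_+\dmM(h^\beta)$; morally $\Phi$ is then multiplication by $t^{-\beta}$, carrying the twisted generator $h^\beta\otimes 1$ to $1\otimes 1$. Expanding $t^{-\beta}$ past the powers $\partial_t^j$ by the Leibniz rule produces the binomial factors $\binom{j}{i}$ together with falling factorials $\beta(\beta-1)\cdots$; rewriting these in terms of $Q_{j-i}(-\beta)$ and replacing each $t^{-1}$ acting on the delta generator by $h^{-1}$ yields exactly the stated relation between the $\phi_i$ and the $g_j$. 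I would then take that relation as the \emph{definition} of $\Phi$ on all of $X\times\fC$: since the coefficient matrix $\big(\binom{j}{i}Q_{j-i}(-\beta)\big)_{i\le j}$ is upper triangular with $1$'s on the diagonal, it is invertible over $\so_X(*H)$, so $\Phi$ is a well-defined $\so_X(*H)$-linear isomorphism with an explicit triangular inverse.

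Second, and this is the crux, I would verify the structural identities that govern the $V$-filtrations. Evaluating on the cyclic generator $h^\beta\otimes 1$ and propagating through the actions (\ref{E:taction}) and (\ref{E:daction}), one checks that $\Phi$ commutes with multiplication by $t$ and, more importantly, that $\Phi$ intertwines the twisted $\doD_X$-action on $\dmM(h^\beta)$ with the ordinary one on $\so_X(*H)$: the extra term $w\beta D(h)/h$ in the twisted connection is precisely what is absorbed upon conjugating by the power of $t$. The only operator with which $\Phi$ fails to commute is $\partial_t$, and the discrepancy is controlled: conjugation by $\Phi$ changes $\partial_t t$ into $\partial_t t$ shifted by the constant $\beta$. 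This constant shift is the entire source of the index shift in the Proposition. To promote the generator computation to all of $\iota_+\dmM(h^\beta)$ one needs a binomial convolution identity for the coefficients $Q_k(-\beta)$, and checking it is the one genuinely computational point of the argument.

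Finally I would transport the $V$-filtration. Because $\Phi$ is $\so_X(*H)$-linear, commutes with $t$, respects the $\doD_X$-action, and conjugates $\partial_t t$ by a constant, the decreasing filtration $\Phi^{-1}\big(V^{\bullet-\beta}\iota_+\so_X(*H)\big)$ satisfies all four conditions of Definition \ref{Def:rVfil}: coherence over $\doD_X[t,\partial_t t]$ is preserved because the shift of $\partial_t t$ by $\beta$ is merely an automorphism of $\fC[\partial_t t]$; conditions (2) and (3) transport under $\Phi$, where Lemma \ref{L:tbijection} supplies the $t$-bijectivity one needs and Lemma \ref{L:localizationHM} handles the matching of the positive-index part across $t=0$; and the nilpotence condition (4) transports because the constant shift of $\partial_t t$ is exactly compensated by the shift $\gamma\mapsto\gamma-\beta$ of the grading. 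By uniqueness of the rational $V$-filtration, the transported filtration must coincide with $V^{\bullet}\iota_+\dmM(h^\beta)$, which is the asserted identity $\Phi\big(V^\gamma\iota_+\dmM(h^\beta)\big)=V^{\gamma-\beta}\iota_+\so_X(*H)$. I expect the main obstacle to be the second step: correctly matching the twisted and untwisted $\doD_X$-structures under $\Phi$ and pinning down the exact constant by which $\partial_t t$ is shifted, since the whole indexing of the Proposition hinges on getting this shift right.
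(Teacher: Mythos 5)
The first thing to say is that the paper contains no proof of Proposition \ref{Pr: isobeta}: it is imported verbatim, with citation, from \cite[Propositions 2.6 and 2.8]{MP18b} and used as a black box. So there is no internal argument to compare yours against; the relevant benchmark is the proof in the cited source, and your outline is essentially a reconstruction of it. There, too, one defines $\Phi$ by the triangular relation with coefficients ${j\choose i}Q_{j-i}(-\beta)$ (invertible over $\so_X(*H)$ because $Q_0=1$ puts units on the diagonal), one checks that $\Phi$ is $\so_X(*H)$-linear, commutes with $t$ and with derivations of $X$ (twisted action on the source, untwisted on the target), and conjugates $\partial_t t$ by the constant $\beta$, and one then identifies the two $V$-filtrations via uniqueness of a filtration satisfying the axioms of Definition \ref{Def:rVfil}. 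So your route is the right one, not merely a plausible one.

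Two concrete points, the first of which resolves the obstacle you flagged. (i) Computing directly with (\ref{E:taction}) and (\ref{E:daction}) (equivalently, from the heuristic that $\Phi$ is multiplication by $t^{-\beta}$) gives $\Phi\circ\partial_t t=(\partial_t t+\beta)\circ\Phi$. To convert this into the stated shift you must use the convention that $\partial_t t-\gamma$ is nilpotent on $\mathrm{Gr}_V^{\gamma}$; this is the convention of \cite{MP18b}, and it is the one forced by (\ref{E:taction})--(\ref{E:daction}) together with the normalization implicit in Lemma \ref{L:421} (for smooth $H$ it places $1\otimes 1$ in $V^1$, matching the log canonical threshold). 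With that convention, if $(\partial_t t-\gamma)^N u\in V^{>\gamma}$ then $(\partial_t t-(\gamma-\beta))^N\Phi(u)\in\Phi(V^{>\gamma})$, which is exactly $\Phi(V^\gamma)=V^{\gamma-\beta}$. Beware that condition (4) of Definition \ref{Def:rVfil} as printed in this paper has the opposite sign ($\partial_t t+\gamma$ nilpotent); taken literally it would flip your conclusion to $V^{\gamma+\beta}$, so this appears to be a typo you must not follow. (ii) In the transport step, the $t$-surjectivity you need is for the \emph{target} filtration, namely $t\cdot V^{\gamma}\iota_+\so_X(*H)=V^{\gamma+1}\iota_+\so_X(*H)$ for all $\gamma$, i.e.\ the $\beta=0$ instance of Lemma \ref{L:tbijection}; it follows from $t$ acting bijectively on $\iota_+\so_X(*H)$, by the same triangular inversion used in the proof of Lemma \ref{L:jjproperty}. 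Invoking Lemma \ref{L:tbijection} for $\dmM(h^\beta)$ itself would presuppose control of the very filtration you are trying to identify. Finally, Lemma \ref{L:localizationHM} is not needed anywhere here: it compares $\iota_+\so_X(*H)$ with $\iota_+\so_X$, and the latter plays no role in this proposition.
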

The above proposition will lead to an bijection between some set of sections of certain type appearing in the equation (\ref{firststep}), 
$$\sum_{i=0}^k\dmM(h^\beta)\otimes\partial_t^i~ \textrm{on the}~\doD_{X\times\fC}\textrm{-module}~ \iota_+\dmM(h^\beta)$$
 and a similar set of sections, 
 $$\sum_{i=0}^k\so_X(*H)\otimes \partial_t^i~ \textrm{on the} ~\doD_{X\times\fC}\textrm{-module}~ \iota_+\so_X(*H).$$ 
 Moreover, we will use the last statement for $\gamma=1$, i.e.
$$\Phi\big{(}V^1\iota_+\dmM(h^\beta)\big{)}=V^{\alpha}\iota_+\so_X(*H).$$
Also, Lemma \ref{L:localizationHM} implies that
$$V^\alpha\iota_+\so_X(*H)=V^\alpha\iota_+\so_X,~\forall ~\alpha>0.$$
Using the above two equations, we are able to improve the equation (\ref{firststep}) and give a better description of the Hodge filtrations on $\iota_+\dmM(h^\beta)$. The details are in the proof of Proposition \ref{prop31}.

\subsection{Weighted homogeneous polynomial with isolated singularities.}\label{S:quasih}
We choose and fix a local coordinate system $(\row xn)$ around an isolated singularity. We say that $f$ is a weighted homogeneous polynomial of weights $w=(\row wn)$ if $f$ is a linear combination of finitely many monomials $x^{\nu}$ such that $$\sum\limits_{i=1}^nw_i\nu_i=1,~\textrm{where}~w_i\in\fQ_+.$$

We denote $\partial_if:=\frac{\partial f}{\partial x_i}$ for any $i$, $1\le i\le n$. Since $f$ has an isolated singularity at the origin, it is well known that the Jacobian algebra
$$\mathcal{A}_f:=\fC\{\row xn\}/(\partial_1f,\ldots, \partial_nf)$$
is a finite dimensional $\fC$-vector space. Denote a monomial basis for the Jacobian algebra $\mathcal{A}_f$ as $\{\row v\mu\}$, where $\mu$ is the dimension of $\mathcal{A}_f$ (usually called the Milnor number of $f$).

\begin{remark}\label{weightsandlct}
When $f$ is a weighted homogeneous polynomial with an isolated singularity at the origin, we know more about $\Vw(v_i)$ for $1\le i\le \nu$. It is well known (see e.g. \cite[remark after 2.8]{Sai94}) that $$\max_i\Vw(v_i)=n-\sum_{j=1}^nw_j=n-\tilde{\alpha}_f.$$
\end{remark}

\section{Proof of Theorem A}\label{S:proofofa}
For the moment we continue to consider an arbitrary $\fQ$-divisor $D$ such that $D=\alpha\cdot H$ where $\alpha>0$ and $H=\textrm{div}(h)$.
We define the natural projection $\varphi: \iota_+\dmM(h^\beta)\to \dmM(h^\beta)$ by:
$$\varphi\left(\sum_{i=0}^km_i\otimes\partial_t^i\right)=m_0,\hspace{5pt}\forall~m_i\in \dmM(h^\beta),~\forall ~k\in\rN.$$
\begin{proposition}\label{prop31}
For any effective $\fQ$-divisor $D$ as above, the Hodge filtration on $\dmM(h^\beta)$ satisfies:
$$F_p\dmM(h^\beta)=\varphi\circ t^{-1}\circ\Phi^{-1}\left(V^\alpha\iota_+\so_X\cap\big{(}\sum_{i=0}^p\so_X\otimes\partial_t^i\big{)}\right),$$
where $\varphi$ is defined as above and $\Phi$ is the isomorphism in Proposition \ref{Pr: isobeta}.
\end{proposition}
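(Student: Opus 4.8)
The plan is to start from equation (\ref{firststep}), which already expresses $F_k\iota_+\dmM(h^\beta)$ in terms of the $V$-filtration, and then project down to $\dmM(h^\beta)$ itself. The key observation is that the two conditions appearing in the Hodge filtration --- lying in $V^0\iota_+\dmM(h^\beta)$ and lying in $\sum_{i=0}^k\dmM(h^\beta)\otimes\partial_t^i$ --- can both be transported across the isomorphism $\Phi$ of Proposition \ref{Pr: isobeta}, where they become more tractable. First I would invoke the identity $\Phi\bigl(V^1\iota_+\dmM(h^\beta)\bigr)=V^\alpha\iota_+\so_X(*H)$ together with Lemma \ref{L:localizationHM}, which gives $V^\alpha\iota_+\so_X(*H)=V^\alpha\iota_+\so_X$ for $\alpha>0$. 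This is why the proposition is stated with $V^\alpha\iota_+\so_X$ rather than $V^0$: it is cleaner to work at level $V^1$ on the $\dmM(h^\beta)$ side and use the $t$-shift to descend back to $V^0$.

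\textbf{Reconciling the levels.} The subtlety is that (\ref{firststep}) involves $V^0\iota_+\dmM(h^\beta)$, whereas $\Phi$ is most useful at $V^1$. Here I would use Lemma \ref{L:tbijection}, which asserts that $t\colon V^\gamma\iota_+\dmM(h^\beta)\to V^{\gamma+1}\iota_+\dmM(h^\beta)$ is a bijection for every $\gamma\in\fQ$, and in particular that $t\colon V^0\to V^1$ is an isomorphism. Thus intersecting with $V^0$ and then intersecting with $V^1$ differ exactly by one application of $t$, which accounts for the factor $t^{-1}$ in the statement. Combined with Proposition \ref{Pr: isobeta}, we get
$$V^1\iota_+\dmM(h^\beta)=\Phi^{-1}\bigl(V^\alpha\iota_+\so_X\bigr),$$
so that $V^0\iota_+\dmM(h^\beta)=t^{-1}\circ\Phi^{-1}\bigl(V^\alpha\iota_+\so_X\bigr)$ in the appropriate sense. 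The remaining task is to track what happens to the ``finite-order'' condition $\sum_{i=0}^p\dmM(h^\beta)\otimes\partial_t^i$ under $\Phi$: because $\Phi$ is $\so_X(*H)$-linear and its formula expresses $g_i$ in terms of the $\phi_j$ for $j\ge i$ with coefficients built from the $Q_k$, it preserves the order in $\partial_t$, so it carries $\sum_{i=0}^p\dmM(h^\beta)\otimes\partial_t^i$ into $\sum_{i=0}^p\so_X(*H)\otimes\partial_t^i$ bijectively.

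\textbf{Extracting $F_p\dmM(h^\beta)$ via $\varphi$.} Once the intersection on the $\iota_+\so_X$ side is identified, I would apply $\Phi^{-1}$, then $t^{-1}$ to shift from $V^1$ back to $V^0$, and finally the projection $\varphi$ onto the degree-zero ($\partial_t^0$) component. The point is that from (\ref{firststep}), $F_p\dmM(h^\beta)=\varphi\bigl(F_p\iota_+\dmM(h^\beta)\bigr)$, since the summand with $i=0$ in the filtration formula (\ref{E:filtration}) is exactly $F_p\dmM(h^\beta)\otimes\partial_t^0$ and all other summands carry positive powers of $\partial_t$. Applying $\varphi$ to both sides of (\ref{firststep}) and using that $\varphi$ kills the $\partial_t^i$ terms for $i\ge 1$ (coming from the $\partial_t^i$ with $i\ge 1$ in the sum) collapses the expression to the single relevant term, which is precisely the intersection at the $V^0$-level pulled through $\varphi$.

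\textbf{The main obstacle} I expect is the bookkeeping in the interplay between the three operations $\varphi$, $t^{-1}$, and $\Phi^{-1}$, and in particular making precise the claim that applying $\varphi$ to (\ref{firststep}) and to the $V^0$-intersection recovers the same ideal. One must check carefully that the operators $\partial_t^i$ appearing in (\ref{firststep}) do not contribute to the $\partial_t^0$-component after projection in a way that would enlarge the answer, and conversely that every element of the target intersection genuinely arises. This requires verifying that the order-filtration in $\partial_t$ is respected by $\Phi$ (so that the truncation $\sum_{i=0}^p$ is preserved) and that the $t$-action formula (\ref{E:taction}), which lowers $\partial_t$-order, interacts correctly with $\varphi$ so that $\varphi\circ t^{-1}$ is well-defined on the relevant submodule. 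Granting the transport results of Proposition \ref{Pr: isobeta} and Lemma \ref{L:tbijection}, the rest is a careful but essentially formal chase through the commuting diagram of $\Phi$, $t$, and $\varphi$.
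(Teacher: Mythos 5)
Your proposal is correct and follows essentially the same route as the paper's own proof: apply $\varphi$ to (\ref{firststep}) so that only the $\partial_t^0$-component survives, use Lemma \ref{L:tbijection} to pass from $V^0$ to $V^1$ via $t$ (noting $t$ also preserves $\sum_{i=0}^p\dmM(h^\beta)\otimes\partial_t^i$), transport both conditions through $\Phi$ at $\gamma=1$ using Proposition \ref{Pr: isobeta} together with the fact that $\Phi$ respects the $\partial_t$-order truncation, and finally replace $\so_X(*H)$ by $\so_X$ via Lemma \ref{L:localizationHM}. The bookkeeping you flag as the main obstacle is resolved in the paper exactly as you anticipate, so your outline is a faithful blueprint of the actual argument.
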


\begin{proof}
Applying the map $\varphi$ to both sides of the equation (\ref{firststep}), after simplifying the sum, we get
$$F_p\dmM(h^\beta)=\varphi\left(V^0\iota_+\dmM(h^\beta)\cap \big{(} \sum_{i=0}^p\mathcal{M}(h^\beta)\otimes\partial_t^i\big{)}\right).$$
By Proposition \ref{Pr: isobeta}, we obtain
$$\Phi^{-1}(a\otimes \partial_t^k)=\sum_{i=0}^k{k\choose i}Q_{k-i}(-\beta)\frac{a}{h^{k-i}}h^\beta\otimes\partial_t^i,$$
and
$$\Phi(bh^\beta\otimes \partial_t^k)=\sum_{i=0}^k{k\choose i}Q_{k-i}(\beta)\frac{b}{h^{k-i}}\otimes\partial_t^i.$$
Since $\Phi$ is bijective, these two formulas imply that  $$\Phi^{-1}\left(\sum_{i=0}^p\so_X(*H)\otimes\partial_t^i\right)=\sum_{i=0}^p\dmM(h^\beta)\otimes\partial_t^i.$$

 Letting $\gamma=1$ in Proposition \ref{Pr: isobeta}, we get
 $$V^1\iota_+\dmM(h^\beta)=\Phi^{-1}\big{(}V^\alpha\iota_+\so_X(*H)\big{)}.$$
Moreover, $$t: V^0\iota_+\dmM(h^\beta)\to V^1\iota_+\dmM(h^\beta)$$ is bijective (by Lemma \ref{L:tbijection} with $\gamma=0$) and $$t:\sum_{i=0}^p\dmM(h^\beta)\otimes\partial_t^i\to\sum_{i=0}^p\dmM(h^\beta)\otimes\partial_t^i$$ is bijective for any $p\in\rN$ (in fact we can write down the formula for $t^{-1}$).
Thus we obtain:
$$F_p\dmM(h^\beta)=\varphi\circ t^{-1}\circ \Phi^{-1}\left(V^\alpha\iota_+\mathcal{O}_X(*H)\cap \sum_{i=0}^p\so_X(*H)\otimes\partial_t^i\right).
$$
Notice that $V^\alpha\iota_+\so_X(*H)=V^\alpha\iota_+\so_X$ for every $\alpha>0$ by Lemma \ref{L:localizationHM}, so we can replace $\so_X(*H)$ by $\so_X$ in the right hand side of the equation to get the desired result.
\end{proof}
From now on, we denote $\Psi:=\varphi\circ t^{-1}\circ\Phi^{-1}$. This is an $\so_X$-module homomorphism having the following property:
\begin{lemma}\label{L:psidxaction}
$$
\Psi\big{(}P(a\otimes \partial_t^k)\big{)}=Q_k(\alpha)P\cdot\left(\frac{a}{h^{k+1}}h^{1-\alpha}\right),\hspace{10pt}~\textrm{for any}~P\in\mathscr{D}_X,~\forall~a\in\so_X,
$$
where $Q_k(\alpha)$ is the same as in (\ref{E:qk}).
\end{lemma}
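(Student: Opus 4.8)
The plan is to split the identity into two independent pieces: the value of $\Psi$ on a single term $a\otimes\partial_t^k$ (the case $P=1$), and the compatibility of $\Psi$ with the $\doD_X$-action. To begin, I would record that $\Psi=\varphi\circ t^{-1}\circ\Phi^{-1}$ is $\so_X$-linear: both $\Phi^{-1}$ and $\varphi$ are $\so_X$-linear by construction, and multiplication by any $b\in\so_X$ commutes with $t$ in $\doD_{X\times\fC}$, hence with $t^{-1}$ on the space $\sum_{i=0}^k\dmM(h^\beta)\otimes\partial_t^i$ on which $t$ is invertible (as used in the proof of Proposition \ref{prop31}). Granting the two pieces, the case $P\in\so_X$ is immediate from $\so_X$-linearity, while an induction on the order of $P$---writing $P$ as a sum of products of functions and vector fields---reduces the general statement to that of a single derivation.

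\emph{Base case.} Here I would compute $\Psi(a\otimes\partial_t^k)$ by hand. Writing $\eta:=\Phi^{-1}(a\otimes\partial_t^k)=\sum_{i=0}^k\binom{k}{i}Q_{k-i}(-\beta)\frac{a}{h^{k-i}}h^\beta\otimes\partial_t^i$, I need the $\partial_t^0$-component of $\zeta:=t^{-1}\eta$. Setting $\zeta=\sum_{i=0}^k c_ih^\beta\otimes\partial_t^i$ and expanding $t\zeta$ via (\ref{E:taction}) yields the triangular system $hc_j-(j+1)c_{j+1}=\binom{k}{j}Q_{k-j}(-\beta)\frac{a}{h^{k-j}}$ for $0\le j\le k$, with $c_{k+1}=0$. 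I would solve it with the ansatz $c_j=\binom{k}{j}Q_{k-j}(\alpha)\frac{a}{h^{k-j+1}}$; verifying that it satisfies the recursion rests on the binomial identity $(j+1)\binom{k}{j+1}=(k-j)\binom{k}{j}$, the one-step relation $Q_m(\alpha)=(\alpha+m-1)Q_{m-1}(\alpha)$, and the key identity $Q_m(-\beta)=-\beta\,Q_{m-1}(\alpha)$ (which holds because $-\beta+\ell=\alpha-1+\ell$). In particular $c_0=Q_k(\alpha)\frac{a}{h^{k+1}}$, so, since $t$ is injective, exhibiting this $\zeta$ with $t\zeta=\eta$ gives $\Psi(a\otimes\partial_t^k)=Q_k(\alpha)\frac{a}{h^{k+1}}h^\beta$. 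I expect this base-case computation to be the main obstacle, since it requires guessing the closed form of the $c_j$ and reconciling the two families $Q_\bullet(\alpha)$ and $Q_\bullet(-\beta)$.

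\emph{Derivation case.} For $P=D\in\textrm{Der}_\fC\so_X$ I would use (\ref{E:daction}) to write $D(a\otimes\partial_t^k)=D(a)\otimes\partial_t^k-D(h)a\otimes\partial_t^{k+1}$, then apply the base formula to each summand, obtaining $Q_k(\alpha)\frac{D(a)}{h^{k+1}}h^\beta-Q_{k+1}(\alpha)\frac{D(h)a}{h^{k+2}}h^\beta$. On the other side, I would expand $Q_k(\alpha)\,D\cdot\big(\frac{a}{h^{k+1}}h^\beta\big)$ using the twisted action $D\cdot(wh^\beta)=\big(D(w)+w\frac{\beta D(h)}{h}\big)h^\beta$ on $\dmM(h^\beta)$; the two expressions coincide once one checks the scalar identity $-Q_{k+1}(\alpha)=(\beta-k-1)Q_k(\alpha)$, valid since $Q_{k+1}(\alpha)=(\alpha+k)Q_k(\alpha)$ and $\beta-k-1=-(\alpha+k)$. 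This gives $\Psi(D(a\otimes\partial_t^k))=D\cdot\Psi(a\otimes\partial_t^k)$ on single terms, hence on all of $\sum_i\so_X\otimes\partial_t^i$ by $\so_X$-linearity; combined with the induction from the first paragraph this yields the stated formula for every $P\in\doD_X$.
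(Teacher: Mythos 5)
Your proof is correct and takes essentially the same route as the paper's: the paper also argues by induction on the order of $P$, and its inductive step is exactly your derivation case in multi-index form (same use of the twisted action, of equation (\ref{E:daction}), and of the identity $Q_{k+1}(\alpha)=(\alpha+k)Q_k(\alpha)$). The only difference is that the paper dismisses the order-zero case as ``easy to check,'' whereas you solve the triangular system for $t^{-1}\Phi^{-1}(a\otimes\partial_t^k)$ explicitly---a worthwhile addition, since that computation (with the reconciliation $Q_m(-\beta)=-\beta\,Q_{m-1}(\alpha)$) is where the constant $Q_k(\alpha)$ actually comes from.
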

\begin{proof}
When $P\in F_0\doD_X$, it is easy to check the desired equation is true.
 
 Inductively, for some $l\in\rN$, we assume for any $\nu=(\row \nu n)$ with $\abs{\nu}=l$, the following equation is true:
 $$\Psi\big{(}\partial_x^\nu(a\otimes\partial_t^k)\big{)}=Q_k(\alpha)\partial_x^\nu\cdot\left(\frac{a}{h^{k+1}}h^{1-\alpha}\right),~\forall ~a\in\so_X, ~\forall ~k\ge 0.$$
 Then we get for any $\nu$ with $\abs{\nu}=l$, 
  \begin{align*}
 (\partial_x^\nu\partial_{x_i})\cdot\left(\frac{a}{h^{k+1}}h^{1-\alpha}\right)&=\partial_x^\nu\cdot\left(\frac{\partial_{x_i}a}{h^{k+1}}h^{1-\alpha}-\frac{(\alpha+k)\partial_{x_i}fa}{h^{k+2}}h^{1-\alpha}\right)\\
 &=\Psi\left(\frac{1}{Q_k(\alpha)}\partial_x^{\nu}\big{(}\partial_{x_i}a\otimes\partial_t^k\big{)}-\frac{\alpha+k}{Q_{k+1}(\alpha)}\partial_x^{\nu}\big{(}\partial_{x_i}ha\otimes\partial_t^{k+1}\big{)}\right)\\
 &=\frac{1}{Q_k(\alpha)}\Psi\left(\partial_x^\nu\partial_{x_i}(a\otimes\partial_t^k)\right).\\
 \end{align*}
 So by induction, we finish the proof.
\end{proof}
From now on, we focus on the case when $H=Z=(f=0)$ reduced, with $f$ a weighted homogeneous polynomial with an isolated singularity at the origin. In this case we can obtain a formula for the Hodge filtration. The key ingredient for the proof is the following result of Morihiko Saito. The lemma below gives an explicit formula for the Hodge filtration on the $V$-filtration of the algebraic microlocalization of $\iota_+\so_X$. Here the so-called microlocal $V$-filtration on the algebraic microlocalization of $\iota_+\so_X$, i.e. $\so_X[\partial_t,\partial_t^{-1}]:=\so_X\otimes_{\fC}\fC[\partial_t,\partial_t^{-1}]$, is defined as follows (see \cite[(2.1.3)]{Sai94}):
\begin{equation}\label{E:microlocalV}
V^\alpha\so_X[\partial_t,\partial_t^{-1}]=\begin{cases}
V^\alpha\iota_+\so_X\oplus(\so_X\otimes_{\fC}\fC[\partial_t^{-1}]\partial_t^{-1})&\textrm{if}~\alpha\le 1;\\
\partial_t^{-j}V^{\alpha-j}\so_X[\partial_t,\partial_t^{-1}]&\textrm{if}~\alpha>1,~\alpha-j\in (0,1].
\end{cases}
\end{equation}
\begin{lemma}\cite[(4.2.1)]{Sai09} \label{L:421} Assume $H=Z=(f=0)$ reduced, with $f$ a weighted homogeneous polynomial with an isolated singularity at the origin. The graph embedding $\iota: X\to X\times \fC$ is defined by $\iota(x)=\big{(}x, f(x)\big{)}$. Then we have
$$F_kV^{\alpha}\so_X[\partial_t,\partial_t^{-1}]=\sum_{i\le k}F_{k-i}D_{X}(\so^{\ge \alpha+i}\otimes\partial_t^i).$$
\end{lemma}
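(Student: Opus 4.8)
The plan is to read off both the microlocal $V$-filtration and the Hodge filtration from the weighted-homogeneous grading, and then match the two sides by a pair of inclusions, the harder one by a descent on the $\partial_t$-power together with induction on the $V$-index and the Hodge level. Let $\theta=\sum_i w_ix_i\partial_{x_i}$ be the Euler vector field, so that $\theta f=f$. A direct computation from (\ref{E:taction}) and (\ref{E:daction}) shows that $E:=\theta+\partial_tt$ is semisimple on $\so_X[\partial_t,\partial_t^{-1}]$, with
$$E(x^A\otimes\partial_t^i)=(\langle w,A\rangle-i)\,x^A\otimes\partial_t^i,$$
so its eigenvalue differs from the weight $\rho(x^A)-i$ by the constant $\tilde\alpha_f=\sum_j w_j$. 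I therefore grade the microlocalization by $W(x^A\otimes\partial_t^i):=\rho(x^A)-i$, and observe that every generator $\so^{\ge\alpha+i}\otimes\partial_t^i$ on the right-hand side satisfies $W\ge\alpha$. The geometric input I would isolate and prove first is twofold: (i) the $V$-order of a monomial dominates its weight, so that $\so^{\ge\alpha+i}\otimes\partial_t^i\subseteq V^\alpha\so_X[\partial_t,\partial_t^{-1}]$; and (ii) $\textrm{Gr}_V^\alpha$ is generated, compatibly with $F$, by the classes of the weight-$\alpha$ seeds, the Jacobian basis $\{v_j\}$ furnishing the tight generators (those whose $V$-order equals $W=\rho(v_j)$). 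These are the classical Brieskorn--Steenbrink--Varchenko description of the Brieskorn lattice of a weighted-homogeneous isolated singularity, and I would deduce (i)--(ii) from the semisimplicity of $E$ on each graded piece together with the uniqueness of the $V$-filtration.

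Granting (i), the inclusion $\supseteq$ is bookkeeping. Each seed $\so^{\ge\alpha+i}\otimes\partial_t^i$ lies in $V^\alpha$ by (i) and in $F_i$, since on the microlocalization $F_p=\bigoplus_{i\le p}\so_X\otimes\partial_t^i$. Any $P\in F_{k-i}\doD_X$ preserves $V^\alpha$ because $\doD_X\subseteq\doD_X[t,\partial_tt]$ and $V^\alpha$ is a module over the latter (axiom (1) of the $V$-filtration), while by (\ref{E:daction}) an operator of order $\le k-i$ raises the $\partial_t$-power by at most $k-i$, so $P$ lands in $F_k$. Hence $F_{k-i}\doD_X\cdot(\so^{\ge\alpha+i}\otimes\partial_t^i)\subseteq F_kV^\alpha$ for every $i\le k$, and summing yields $\supseteq$.

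For $\subseteq$, write $R_{k,\alpha}$ for the right-hand side. I first reduce the $V$-index to a bounded range: from $t\cdot F_kV^\alpha=F_kV^{\alpha+1}$ for $\alpha>0$, together with the direct check that $t\cdot R_{k,\alpha}\subseteq R_{k,\alpha+1}$ (the two terms of $t(g\otimes\partial_t^i)$ produced by (\ref{E:taction}) are again seeds at index $\alpha+1$), the identity for $\alpha$ forces it for $\alpha+1$ once combined with the $\supseteq$ of the previous paragraph, so it suffices to treat $\alpha\in(0,1]$. There I argue by descent on the top $\partial_t$-power: given an element of $F_kV^\alpha$ of top power exactly $k$, I use the surjection $\partial_t\cdot F_p\textrm{Gr}_V^\beta=F_{p+1}\textrm{Gr}_V^{\beta-1}$ ($\beta<1$) of the compatible $V$-filtration, fed by the weight seeds of (ii), to write its leading term as $F_{k-i}\doD_X$ applied to a seed at power $i$; subtracting, the remainder lies in $F_{k-1}V^\alpha=R_{k-1,\alpha}\subseteq R_{k,\alpha}$ by induction on $k$. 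Since each element is a finite Laurent polynomial in $\partial_t$, this descent terminates, and at the bottom the element is built directly from the weight-$\alpha$ seeds of (ii); the $\doD_X$-factors enter exactly when passing between the actual section and its weight-homogeneous leading term.

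The genuine content is the input (i)--(ii), i.e. the precise microlocal $V$-filtration and its interaction with $F$; everything else is formal manipulation of the two filtrations and the operators $t,\partial_t$. I expect this to be the main obstacle, because it is the point where the weighted-homogeneous structure must be used in full force: one needs the roots of the microlocal $b$-function to be the spectral numbers $\rho(v_j)$, and one must control how far the $V$-order of a Jacobian-ideal monomial exceeds its naive weight $W$---a discrepancy produced precisely by the relations coming from (\ref{E:daction}), which encode the Jacobian ideal. The semisimplicity of $E$ is what makes these relations tractable on each $\textrm{Gr}_V^\alpha$, reducing the nilpotence requirement in axiom (4) to a clean eigenvalue computation.
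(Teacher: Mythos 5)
The paper itself does not prove Lemma \ref{L:421}: it is quoted from \cite[(4.2.1)]{Sai09}, and the only argument of this kind actually written out in the paper is the analogous lemma for Newton-non-degenerate germs in Section \ref{S:nondegenerate}. Judged on its own terms, your proposal gets the easy half right: the computation of $E=\theta+\partial_tt$ is correct, the inclusion $\supseteq$ is indeed bookkeeping once one knows $\so^{\ge\alpha+i}\otimes\partial_t^i\subseteq V^\alpha$, and the reduction to $\alpha\in(0,1]$ is legitimate (in fact easier than you make it: $\partial_t^{-1}$ commutes with the $\doD_X$-action and shifts both sides of the asserted identity directly from definition (\ref{E:microlocalV}), so no $t$-bijectivity for the microlocalization is needed there).

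The genuine gap is in $\subseteq$. Write $R_{k,\alpha}$ for the right-hand side, as you do, and recall $F_kV^\alpha$ means $F_k\cap V^\alpha$ with $F_k=\sum_{i\le k}\so_X\otimes\partial_t^i$. Your descent on the top $\partial_t$-power requires exactly this: every coefficient of $\partial_t^k$ of an element of $F_k\cap V^\alpha$ must also occur as the coefficient of $\partial_t^k$ of an element of $R_{k,\alpha}$. The set of such coefficients for $F_k\cap V^\alpha$ is, after the shift by $\partial_t^{-k}$, the induced microlocal filtration $\tilde{V}^{k+\alpha}\so_X$; for $R_{k,\alpha}$ it is $\sum_{i\le k}(\partial f)^{k-i}\so^{\ge\alpha+i}$, because only the order-$(k-i)$ part of $P\in F_{k-i}\doD_X$ reaches level $k$, contributing its principal symbol evaluated at $-\partial f$. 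So the step you need is precisely
$$\tilde{V}^{k+\alpha}\so_X\subseteq\sum_{i\le k}(\partial f)^{k-i}\so^{\ge\alpha+i},$$
which is Saito's computation of the microlocal $V$-filtration (the unwound form of Proposition \ref{P:saitomicrovfil}, i.e.\ \cite[(2.2)]{Sai16}) --- a statement of the same depth as the lemma itself. Your input (ii) is a vague paraphrase of it, and the proposed derivation from ``semisimplicity of $E$ plus uniqueness of the $V$-filtration'' cannot produce it: those ingredients can plausibly be pushed to identify the \emph{unfiltered} filtration $V^\alpha=\sum_i\doD_X(\so^{\ge\alpha+i}\otimes\partial_t^i)$ (verify the axioms for this candidate, then invoke uniqueness), but they say nothing about the intersection with $F_k$, i.e.\ about which operator orders suffice, and that intersection is the entire filtered content of the lemma. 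There is also a flaw in your termination claim: if the descent is instead fed by the $\mathrm{Gr}_V$-form of (ii), subtracting a matching element improves the $V$-index but not the $F$-level, so the induction runs over the unbounded set of $V$-jumps at fixed $k$; finiteness of Laurent polynomials in $\partial_t$ controls the $F$-descent, not this $V$-ascent. Termination does hold, but via the separate estimate $F_kV^\beta\subseteq t^mF_k\subseteq R_{k,\alpha}$ for $\beta\gg 0$, which you do not supply.

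For contrast, the proof the paper does write down (Section \ref{S:nondegenerate}, following Saito's method) settles exactly this point algebraically rather than Hodge-theoretically: an element of the candidate sum whose top $\partial_t$-power drops yields a syzygy $\sum_\nu(-1)^{|\nu|}(\partial f)^\nu a_\nu=0$; the regular-sequence property of $(\partial_1f,\ldots,\partial_nf)$ forces the minimal-degree $a_\nu$ into $(\partial f)$, and a weighted division statement (Lemma \ref{L:weightfunctiong}; in the weighted homogeneous case, the graded structure of the Milnor algebra) rewrites the element so as to raise the minimal $\partial_t$-power, a weight bound ending the induction. That regular-sequence-plus-division mechanism is the idea missing from your proposal; without it (or without simply citing \cite{Sai16}), you have reduced the lemma to an unproved input equivalent to it.
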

It turns out that $V^\alpha \iota_+\so_X$, the $\doD_{X\times\fC}$-module involved in the expression of $F_\bullet \dmM(h^\beta)$ in Proposition \ref{prop31}
, can be related to the microlocal $V$-filtration; this leads to:
\begin{proof}[\textbf{Proof of Theorem} \ref{thma}]
By Lemma \ref{L:421}, letting $i: \so_X[\partial_t]\to \so_X[\partial_t, \partial_t^{-1}]$ be the natural inclusion, we have by definition that $i^{-1}(V^\alpha \so_X[\partial_t, \partial_t^{-1}])=V^\alpha\so_X[\partial_t]$, so we obtain
$$F_pV^\alpha\iota_+\so_X= F_p V^\alpha \so_X[\partial_t, \partial_t^{-1}]\cap \so_X[\partial_t]$$
 Using the fact that $\so^{\ge \alpha+i}=\so_X$ for $i\le-1$, and $(\partial_{x_1}f, \cdots,\partial_{x_n}f)\subset \so^{\ge 1}\subset \so^{\ge \alpha}$ since $0<\alpha\le 1$, we further get
$$F_pV^\alpha\iota_+\so_X=\sum_{i=0}^p F_{p-i}D_X(\so^{\ge \alpha+i}\otimes \partial_t^i).$$
Hence by Proposition \ref{prop31}, we obtain
$$F_p\dmM(f^\beta)=\Psi\left(\sum_{i=0}^pF_{p-i}D_X(\so^{\ge \alpha+i}\otimes\partial_t^i)\right),$$
where $\Psi:=\varphi\circ t^{-1}\circ\Phi^{-1}$ is defined as above. Then Lemma \ref{L:psidxaction} implies
$$\Psi\big{(}F_{p-i}D_X(\so^{\ge \alpha+i}\otimes\partial_t^i)\big{)}=F_{p-i}D_X\cdot\left( \frac{\so^{\ge\alpha+i}}{f^{i+1}}f^{1-\alpha} \right)~\hspace{10pt}~\textrm{for any}~0\le i\le p,$$
which completes the proof.
\end{proof}

\begin{proof}[\textbf{Proof of Corollary} \ref{C:B}]
We first show the following statement (\ref{E:1step}) by induction: for $k\in\rN$,
\begin{equation}\label{E:1step}
I_{k+1}(D)=\so^{\ge \alpha+k+1}+\sum_{1\le i\le n,a\in I_{k}(D)}\mathcal{O}_X\big{(}f\partial_ia-(\alpha+k)a\partial_if\big{)}
\end{equation}

By Theorem \ref{thma}, for $p=0$ and $p=1$ we obtain
$$I_0(D)=\so^{\ge \alpha}$$ 
and 
$$I_1(D)f^{1-\alpha}=f^2\cdot F_1\mathcal{O}_X(*Z)f^{1-\alpha}=\so^{\ge \alpha+1}+(f)I_0(D)+\sum_{1\le i\le n, a\in I_0(D)}\so_X\big{(}f\partial_ia-\alpha a\partial_if\big{)}.$$
So we get $(f)I_0(D)\subset \so^{\ge \alpha+1}$ and thus the statement (\ref{E:1step}) is true for $k=0$.

Now assume the statement is true for $k=l$. Then
\begin{align*}
I_{l+1}(D)&=\so^{\ge \alpha+l+1}+(f)I_l(D)+\sum_{1\le i\le n, a\in I_l(D)}\so_X\big{(}f\partial_ia-(\alpha+l)a\partial_if\big{)}\\
&=\so^{\alpha+l+1}+(f)\so^{\ge \alpha+l}+(f)\sum_{1\le j\le n, b\in I_{l-1}(D)}\so_X\big{(}f\partial_jb-(\alpha+l-1)b\partial_jf\big{)}\\
&\quad +\sum_{1\le i\le n, a\in I_l(D)}\so_X\big{(}f\partial_ia-(\alpha+l)a\partial_if\big{)}\\
\end{align*}
Since $(f)\so^{\ge\alpha+l}\subset \so^{\ge \alpha+l+1}$ and $$f\partial_j(fb)-(\alpha+l)(fb)\partial_jf=f\big{(}f\partial_jb-(\alpha+l-1)b\partial_jf\big{)},$$ we conclude that the statement is true when $k=l+1$. So by induction the statement (\ref{E:1step}) is true.

Then it suffices to show:
 $$\so^{\ge \alpha+k+1}\cap(\partial_1f,\ldots, \partial_nf)\subseteq \sum_{1\le i\le n,a\in I_{k}(D)}\mathcal{O}_X\big{(}f\partial_ia-(\alpha+k)a\partial_if\big{)}.$$
 Take any monomial $b\in \so^{\ge \alpha+k+1}\cap (\partial_1f,\ldots, \partial_nf)$, write $b=B_1\partial_1f+B_2\partial_2f+\cdots+B_n\partial_nf$.  
 We claim that:
  \begin{equation}\label{E:claim}
  B_i\partial_if\in \sum_{1\le i\le n,a\in I_{k}(D)}\mathcal{O}_X\big{(}f\partial_ia-(\alpha+k)a\partial_if\big{)}, \forall ~1\le i\le n.
  \end{equation}
To prove the claim, we just show the case for $i=1$, since for $2\le i\le n$ the argument is completely similar. If $B_1=0$, then $\partial_1fB_1=0$. Now suppose $0\neq B_1=\sum_{A\in J_1}b_A x^{A}$. Take any $A\in J_1$. If $\partial_1(x^A)=0$, then $\partial_1fx^{A}$ belongs to the right hand side of (\ref{E:claim}); otherwise we have $\partial_1(x^A)\neq 0$, so we get $\Vw(\partial_1(x^A))\ge \alpha+k$ and hence $\partial_1(x^A)\in I_k(D)$. We can easily check that
$$(\alpha+k)x^A\partial_1f=(a_1-1)\big{(}f\partial_1(x^A)-(\alpha+k)x^A\partial_1f\big{)}-x_1\big{(}f\partial_1(\partial_1x^A)-(\alpha+k)\partial_1f\cdot \partial_1x^A{)}.$$
 Thus $$B_1\partial_1f=\sum_{A\in J_1}b_A(x^A\partial_1f)\in\sum_{1\le i\le n,a\in I_{k}(D)}\mathcal{O}_X\big{(}f\partial_ia-(\alpha+k)a\partial_if\big{)}.$$ 
 Similarly, we can show $B_i\partial_if$ belongs to the right hand side of (\ref{E:claim}) for $2\le i\le n$, and hence the claim is true. Therefore 
 $$b\in\sum_{1\le i\le n,a\in I_{k}(D)}\mathcal{O}_X\big{(}f\partial_ia-(\alpha+k)a\partial_if\big{)}.$$
\end{proof}

As an application of Theorem \ref{thma}, we prove the following property of Hodge ideals:
\begin{corollary}\label{C:hodgeidealseq}
If $D=\alpha Z$, where $0<\alpha\le 1$ and $Z=(f=0)$ is an integral and reduced effective divisor defined by $f$, a weighted homogeneous polynomial with an isolated singularity at the origin, then we have:
$$I_{k+1}(D)\subseteq I_k(D)~\hspace{10pt}\textrm{for any}~k\in\mathbb{N}.$$
\end{corollary}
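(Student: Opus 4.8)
The plan is to deduce everything from the recursive description of the Hodge ideals in Corollary \ref{C:B}, namely
$$I_{k+1}(D)=\so^{\ge\alpha+k+1}+\sum_{1\le i\le n,\,a\in I_k(D)}\so_X\big(f\partial_ia-(\alpha+k)a\partial_if\big).$$
For the first summand, since $\alpha+k+1>\alpha+k$ we have $\so^{\ge\alpha+k+1}\subseteq\so^{\ge\alpha+k}\subseteq I_k(D)$, the last inclusion being the $\so^{\ge\alpha+k}$-part of Corollary \ref{C:B} at level $k$; so this piece is harmless. For a generator of the second summand, $(\alpha+k)a\partial_if$ already lies in $I_k(D)$ because $a\in I_k(D)$ and $I_k(D)$ is an ideal. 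Thus the whole statement reduces to the single stability property
$$f\partial_i\,I_k(D)\subseteq I_k(D)\qquad\text{for all }i\text{ and all }k,$$
which I abbreviate $(P_k)$ and prove by induction on $k$.

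Before the induction I record two facts. First, because the Hodge filtration is increasing, $F_{k-1}\dmM(f^{1-\alpha})\subseteq F_k\dmM(f^{1-\alpha})$, and unwinding Definition \ref{defHI} locally (where $F_k\dmM(f^{1-\alpha})=f^{-(k+1)}I_k(D)f^{1-\alpha}$) this says exactly $f\cdot I_{k-1}(D)\subseteq I_k(D)$. Second, $f\partial_i$ is a twisted derivation: $f\partial_i(ca)=c\,(f\partial_ia)+a\,(f\partial_ic)$ with $f\partial_ic\in\so_X$, so an $\so_X$-ideal is stable under $f\partial_i$ as soon as $f\partial_i$ maps a generating set into it. Hence it suffices to check $(P_k)$ on the generators of $I_k(D)$ supplied by Corollary \ref{C:B}: the monomials $x^A\in\so^{\ge\alpha+k}$, and the elements $g_{i,b}:=f\partial_ib-(\alpha+k-1)b\partial_if$ with $b\in I_{k-1}(D)$. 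The base case $(P_0)$ and all monomial generators are disposed of by a weight estimate: for $\rho(x^A)\ge\alpha+k$ one computes $\rho\big(f\partial_jx^A\big)\ge 1+\rho(x^A)-w_j\ge\alpha+k$, using that every variable occurs in $f$ by the isolated singularity hypothesis, whence $w_j\le1$; thus $f\partial_jx^A\in\so^{\ge\alpha+k}\subseteq I_k(D)$.

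The heart of the argument, and the step I expect to be the main obstacle, is to show $f\partial_j g_{i,b}\in I_k(D)$ for the derivation-type generators. Here I would expand $f\partial_j g_{i,b}$, trade the single second derivative using $f^2\partial_i\partial_jb=f\partial_i(f\partial_jb)-f(\partial_if)(\partial_jb)$, and then rewrite each first-order term $f\partial_\bullet b$ through the generators $g_{\bullet,b}$ of $I_k(D)$. After the $(\partial_if)(\partial_jf)$ cross-terms cancel, the computation collapses to
$$f\partial_j g_{i,b}=g_{i,b'}+(\partial_jf)\,g_{i,b}-(\partial_if)\,g_{j,b}-(\alpha+k-1)(\partial_i\partial_jf)(fb),\qquad b':=f\partial_jb.$$
By the inductive hypothesis $(P_{k-1})$ we have $b'=f\partial_jb\in I_{k-1}(D)$, so $g_{i,b'}$ is again a generator of $I_k(D)$; likewise $g_{i,b},g_{j,b}\in I_k(D)$; and $fb\in f\cdot I_{k-1}(D)\subseteq I_k(D)$ by the first recorded fact. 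Every term on the right therefore lies in $I_k(D)$, which establishes $(P_k)$ on generators and hence, via the twisted-derivation remark, in general. Feeding $(P_k)$ back into the reduction of the first paragraph yields $I_{k+1}(D)\subseteq I_k(D)$. The delicate point is precisely that the induction for $(P_k)$ must close using only $(P_{k-1})$ and the always-available inclusion $f\cdot I_{k-1}(D)\subseteq I_k(D)$ — crucially \emph{without} invoking the inclusion $I_k(D)\subseteq I_{k-1}(D)$ we are trying to prove — and the cancellation above is what makes this non-circular.
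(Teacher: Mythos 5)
Your proof is correct, but it takes a more roundabout route than the paper's, and the detour stems from a misconception about circularity. Both arguments are inductions on $k$ built on the recursion of Corollary \ref{C:B}; the difference is the inductive step. The paper inducts on the inclusion $I_l(D)\subseteq I_{l-1}(D)$ itself: for $a\in I_l(D)$, the inductive hypothesis gives $a\in I_{l-1}(D)$, so $f\partial_ia-(\alpha+l-1)a\partial_if$ is \emph{already one of the listed generators} of $I_l(D)$, and subtracting $a\partial_if\in I_l(D)$ (ideal property) shows that the derivation-type generators of $I_{l+1}(D)$ lie in $I_l(D)$; together with $\so^{\ge\alpha+l+1}\subseteq\so^{\ge\alpha+l}\subseteq I_l(D)$ this finishes the step in two lines. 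You instead prove the stability property $(P_k)$: $f\partial_i I_k(D)\subseteq I_k(D)$, insisting on using only $(P_{k-1})$ and $f\cdot I_{k-1}(D)\subseteq I_k(D)$, which forces the second-order Leibniz identity
$$f\partial_j g_{i,b}=g_{i,b'}+(\partial_jf)\,g_{i,b}-(\partial_if)\,g_{j,b}-(\alpha+k-1)(\partial_i\partial_jf)(fb),$$
an identity I checked and which does hold, with exactly the cancellations you describe. But the constraint you impose on yourself is unnecessary: $(P_{k-1})$ is equivalent to $I_k(D)\subseteq I_{k-1}(D)$ (your own first paragraph proves one implication, and the other is immediate by adding back $a\partial_if$), so invoking that inclusion inside the step $(P_{k-1})\Rightarrow(P_k)$ is not circular --- it is precisely what induction licenses --- and doing so collapses your computation to the paper's subtraction trick, with no second derivatives needed. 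What your version buys is that it records explicitly that each $I_k(D)$ is stable under the operators $f\partial_i$, and your weight estimate for the monomial generators (using $w_j\le 1$, valid since every variable occurs in $f$ by the isolated-singularity hypothesis) is a correct, self-contained treatment of the base case; what it costs is the second-order computation, which the paper's proof avoids entirely.
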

\begin{proof}
When $k=0$, by Corollary \ref{C:B}, we easily get $$I_1(D)\subseteq I_0(D).$$
(In fact, even without the assumption for $f$, we always have $I_1(D)\subseteq I_0(D)$ when $D=\alpha Z$ with $\alpha\le 1$ by \cite[Corollary 5.5]{MP18b}.)

Now we assume $I_{l}(D)\subseteq I_{l-1}(D)$ for some $l\in\rN$. Then, for $1\le i\le n$ and for any $a\in I_{l}(D)\subseteq I_{l-1}(D)$, by Corollary \ref{C:B}, we have $f\partial_ia-(\alpha+l-1)a\partial_if\in I_l(D)$. By adding $-a\partial_if\in I_l(D)$, we know that $f\partial_ia-(\alpha+l)a\partial_if \in I_l(D)$. Then again by Corollary \ref{C:B}, we get $I_{l+1}(D)\subseteq I_l(D)$. So by induction the proof is complete.
\end{proof}
\begin{proof}[\textbf{Proof of Corollary} \ref{C:generatinglevel}]
As mentioned in Remark \ref{weightsandlct}, for any weighted homogeneous function $f$ with an isolated singularity at the origin, $$\max_{j}\{\Vw(v_j)\}=n-\tilde{\alpha}_f,$$
 where $\{v_1,\cdots, v_{\mu}\}$ is a fixed monomial basis of Jacobian algebra $\fC\{\row xn\}/(\partial f)$. 
 
Let $$J_q(D)f^{1-\alpha}:=F_1\mathscr{D}_X\cdot\left(\frac{I_{q-1}(D)}{f^{q}}f^{1-\alpha}\right)f^{q+1}.$$
By Corollary \ref{C:B}, it is enough to show that 
$$\sum_{v_j\in A^{\ge \alpha+p}}\so_Xv_j\subset J_p(D)~\iff~p\ge [n-\tilde{\alpha}_f-\alpha]+1.$$
First, taking any $q\le [n-\tilde{\alpha}_f-\alpha]$, we have $q+\alpha\le n-\tilde{\alpha}_f$. There exists $v_j$ such that $\Vw(v_j)=n-\tilde{\alpha}_f$. Therefore $v_j \in \so^{\ge q+\alpha}$ and $v_j\notin (f_1,\cdots, f_n)$.

Since $$f=\sum_{i=1}^nw_ix_i\partial_if, $$
we obtain
 $$J_q(D)=(f)I_{q-1}(D)+\sum_{1\le i\le n,~a\in I_{q-1}(D)}\so_X\big{(}f\partial_ia -(q-1+\alpha)a\partial_if\big{)}\subset (f_1,\cdots, f_n).$$
Thus $$\sum_{v_j\in \so^{\ge \alpha+p}}\so_Xv_j\nsubseteq J_q(D).$$

On the other hand, for any $p\ge [n-\tilde{\alpha}_f-\alpha]+1$, we get $p+\alpha\ge [n-\tilde{\alpha}_f-\alpha]+1+\alpha>n-\tilde{\alpha}_f-\alpha+\alpha=n-\tilde{\alpha}_f$. Since $\max\{\Vw(v_j)\}=n-\tilde{a}_f<\alpha+p$, by definition there are no such $v_j$.
\end{proof}
\begin{example}\label{E:cusp}
Let $f=x^2+y^3$. Then $\mathcal{A}_f=\fC 1\oplus\fC y$ and $\Vw(1)=\frac{5}{6}$ and $\Vw(y)=\frac{7}{6}$. Then:
\begin{itemize}
\item When $0<\alpha\le\frac{1}{6}$, $I_0(D)=\mathbb{C}\{x,y\}$, $I_1(D)=(y)+(\partial f)=(x,y)$ and $I_2(D)=(x^2, xy, y^3)$. The generation level is $[2-\frac{5}{6}-\alpha]=1$. We see that $I_1(D)\neq J_1(D)$ and $I_2(D)=J_2(D)$.

\item When $\frac{1}{6}<\alpha\le\frac{5}{6}$, $I_0(D)=\mathbb{C}\{x,y\}$, $I_1(D)=(x^2,xy^2,y^3)+(\partial f)=(x,y^2)$. The generating level is 0.

\item When $\frac{5}{6}<\alpha\le 1$, $I_0(D)=(x,y)$, $I_1(D)=(x^3,x^2y,xy^3,y^4)+\big{(}(1-2\alpha)x^2+y^3, xy^2, xy, x^2+(1-3\alpha)y^3\big{)}=(x^2,xy,y^3)$. The generating level is 0.
\end{itemize}
By using the birational transformation property, in \cite[Example 10.5]{MP18a}, the authors can compute $I_2(D)$ for $\alpha\ge\frac{5}{6}$. It is not clear however how to apply the method to obtain the calculation above for $\alpha<\frac{5}{6}$. But by using Theorem $\ref{thma}$ and Corollary \ref{C:generatinglevel}, we can compute it for all $0<\alpha\le 1$.
\end{example}
As another application of Corollary \ref{C:B}, we illustrate how the roots of the Bernstein-Sato polynomial associated to $f$ relate to the jumping numbers and coefficients associated to ideals of the form $I_k(cZ)/(\partial f)$ in the case when $f$ is a weighted homogeneous polynomial with an isolated singularity at the origin.

 For a germ of holomorphic function $f\in \so_{X,0}\cong\fC\{\row xn\}$, there exists a unique monic polynomial $b_f(s)$ of smallest degree such that there exists $P\in \doD_X[s]$ and
$$b_f(s)f^s=Pf^{s+1}~\hspace{10pt}\textrm{where}~s~\textrm{is a variable}.$$
We call $b_f(s)$ the \emph{Bernstein-Sato polynomial} associated to $f$.

It is usually hard to calculate the Bernstein-Sato polynomial. However, in many cases, a good formula is known, e.g, $f$ is a monomial or $f=x_1^{a_1}+\cdots+x_n^{a_n}$ for some integers $a_i\ge 1$. Moreover, in the case when $f$ has non-degenerate Newton boundary with respect to the local coordinates or in the case when $f$ is semi-quasihomogeneous, a good algorithm has been found (see details in \cite{BGMM89}).
In the case when $f$ is a weighted homogeneous polynomial with an isolated singularity at the origin, the result is much easier to describe. The proof that $b_f(s)$ is written explicitly as follows can be found for instance in Granger's survey \cite[Theorem 4.8]{Gra10}:
\begin{theorem}\label{T:gra10}
Let $f$ be a weighted homogeneous polynomial with an isolated singularity at the origin and with weight $w=(\row wn)$. Let $M=\{\row v\mu\}$ be a monomial basis of the Jacobian algebra $\fC\{\row xn\}/(\partial f)$ where $(\partial f)$ denotes the ideal $(\partial_{1}f,\cdots, \partial_{n}f)$. Let $$E=\{\rho\in\fQ~|~ \exists~v_i\in M~\textrm{such that}~\Vw(v_i)=\rho\}.$$ Then:
$$b_f(s)=(s+1)\prod_{\rho\in E}(s+\rho).$$
\end{theorem}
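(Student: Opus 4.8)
The plan is to avoid a direct attack on the functional equation and instead reduce the computation of $b_f(s)$ to the $V$-filtration that the paper has already set up, exploiting quasi-homogeneity to make that $V$-filtration completely explicit. Write the reduced Bernstein--Sato polynomial $\tilde b_f(s):=b_f(s)/(s+1)$; the target is then equivalent to $\tilde b_f(s)=\prod_{\rho\in E}(s+\rho)$. By the theory of Malgrange, Kashiwara and Saito, $\tilde b_f$ is the minimal polynomial of the operator $\partial_t t$ (which, up to sign, is the operator $s$ in the functional equation) acting on the associated graded $\mathrm{Gr}_V^\bullet$ of the finite-dimensional microlocal module $\so_X[\partial_t,\partial_t^{-1}]$ along the graph of $f$. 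The crucial structural input is condition (4) of Definition \ref{Def:rVfil}: $\partial_t t+\gamma$ is nilpotent on $\mathrm{Gr}_V^\gamma$, so $s$ acts there as $-\gamma$ plus a nilpotent part, and each $\gamma$ with $\mathrm{Gr}_V^\gamma\neq 0$ contributes the factor $(s+\gamma)$, with multiplicity one exactly when the nilpotent part vanishes. Thus the whole statement reduces to determining the $\gamma$ with $\mathrm{Gr}_V^\gamma\neq 0$ and checking semisimplicity.

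First I would identify the graded pieces. The Euler field $\chi=\sum_i w_i x_i\partial_{x_i}$ satisfies $\chi f=f$ and hence $\chi f^s=s f^s$, which is the infinitesimal homogeneity that forces $\partial_t t$ to be diagonalizable on $\mathrm{Gr}_V$. Concretely I would read the graded pieces off the microlocal $V$-filtration (\ref{E:microlocalV}) together with Saito's formula in Lemma \ref{L:421}: since $\so^{\ge\gamma}$ is spanned by the monomials $x^A$ with $\rho(x^A)=\big(\sum_j w_j\big)+\langle w,A\rangle\ge\gamma$, passing to $\mathrm{Gr}_V$ modulo the Jacobian ideal $(\partial f)$ identifies the microlocal $V$-graded with the Milnor algebra $\mathcal A_f$, graded by the weight function $\rho$. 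In other words $\mathrm{Gr}_V^\gamma\cong\bigoplus_{\rho(v_i)=\gamma}\fC\,v_i$, where $\{v_1,\dots,v_\mu\}$ is the chosen monomial basis; this is nonzero precisely when $\gamma\in E$. Because each $v_i$ is a pure-weight monomial, it is a $\chi$-eigenvector, so $\partial_t t$ acts semisimply on every $\mathrm{Gr}_V^\gamma$, the nilpotent part vanishes, and each $\gamma\in E$ contributes the simple factor $(s+\gamma)$.

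Assembling these ingredients yields $\tilde b_f(s)=\prod_{\rho\in E}(s+\rho)$, and restoring the standard factor $(s+1)$ gives $b_f(s)=(s+1)\prod_{\rho\in E}(s+\rho)$. As a self-contained alternative that sidesteps the microlocal machinery, I would argue directly from $\chi f^s=s f^s$: the identity $(\partial_{x_i}f)\,f^s=\tfrac{1}{s+1}\partial_{x_i}(f^{s+1})$ shows that the Jacobian ideal times $f^s$ lies in $\doD_X[s]f^{s+1}$ up to the factor $(s+1)$, and, since $\partial_1 f,\dots,\partial_n f$ form a regular sequence for an isolated singularity, one can build for each basis element $v_i$ an explicit Bernstein operator realizing the factor $(s+\rho(v_i))$; this already establishes the divisibility $b_f\mid(s+1)\prod_{\rho\in E}(s+\rho)$.

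The main obstacle is the sharp lower bound: showing that no factor can be dropped, i.e.\ that every $\rho\in E$ genuinely occurs and that all multiplicities equal one. In the $V$-filtration approach this is the assertion that $\mathrm{Gr}_V^\gamma\neq 0$ for each $\gamma\in E$ and that $\partial_t t$ has no Jordan blocks there, which is exactly where the semisimplicity coming from quasi-homogeneity is indispensable; in the elementary approach it amounts to exhibiting, for each $v_i$, a nonzero class that obstructs $(s+1)\prod_{\rho\in E,\,\rho\neq\rho(v_i)}(s+\rho)\cdot f^s$ from lying in $\doD_X[s]f^{s+1}$. A secondary point demanding care is the bookkeeping of sign conventions and of which $\gamma$ are admissible, so that precisely the spectral numbers recorded in $E$ appear with no spurious integer translates; the finite-dimensionality of the microlocal module, whose $V$-graded is the full $\mu$-dimensional algebra $\mathcal A_f$ (cf.\ Remark \ref{weightsandlct}), is what guarantees this.
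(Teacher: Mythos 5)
First, a structural point: the paper does not prove Theorem \ref{T:gra10} at all --- it is quoted from Granger's survey \cite[Theorem 4.8]{Gra10} --- so there is no internal proof to compare against, and your argument has to stand on its own. Your overall strategy is the standard one (and essentially the one behind the cited result): write $\tilde{b}_f(s)=b_f(s)/(s+1)$, realize $\tilde{b}_f$ as the minimal polynomial of $s=-\partial_t t$ on an object built from the $V$-filtration along the graph of $f$, and use the Euler field $\chi=\sum_i w_ix_i\partial_{x_i}$ to show that this operator acts semisimply with eigenvalues $\rho(v_i)$. The Euler-field computation itself, and the observation that semisimplicity forces each factor to occur with multiplicity one, are sound.

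The genuine gap is in the key step: the object on which you take the minimal polynomial is misidentified, and the error is not cosmetic. The module $\so_X[\partial_t,\partial_t^{-1}]$ is not finite-dimensional, and ``the minimal polynomial of $\partial_t t$ on $\mathrm{Gr}_V^\bullet$'' of it does not exist as a polynomial: by the very definition (\ref{E:microlocalV}), $\partial_t^{-1}$ induces isomorphisms $\mathrm{Gr}_V^\gamma\cong\mathrm{Gr}_V^{\gamma+1}$ for $\gamma>0$, so any nonzero graded piece recurs in every positive integer translate and would contribute infinitely many factors. For the same reason your identification $\mathrm{Gr}_V^\gamma\cong\bigoplus_{\rho(v_i)=\gamma}\fC\,v_i$ cannot hold for this module: its graded pieces only detect the monodromy eigenvalues, i.e.\ the classes of the $\rho(v_i)$ modulo $\rZ$, whereas the theorem must distinguish, say, a root at $5/6$ from one at $11/6$ (and indeed $E$ is in general not contained in any interval of length one). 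That extra integer information lives in a \emph{lattice}, not in the abstract graded module. The statement you actually need is Malgrange's theorem \cite{Mal83} (microlocalized by Saito \cite{Sai94}): for an isolated singularity, $\tilde{b}_f(s)$ is the minimal polynomial of $-\partial_t t$ on the quotient of the (saturated) Brieskorn lattice, e.g.\ on $\tilde{H}''_0/\partial_t^{-1}\tilde{H}''_0$ with $\tilde{H}''_0=\so_X[[\partial_t^{-1}]]\,\delta$; one must also verify that the lattice is stable under $\partial_t t$, which for weighted homogeneous $f$ follows from the Euler relation $f=\sum_i w_ix_i\partial_if\in(\partial f)$. Once that correct object is in place, your computation identifies the quotient with $\mathcal{A}_f$ graded by $\rho$ and finishes the proof; without it, the argument as written either produces an undefined object or the wrong answer. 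Finally, as you yourself note, the ``self-contained alternative'' in your last paragraphs only yields the divisibility $b_f(s)\mid(s+1)\prod_{\rho\in E}(s+\rho)$, so it cannot substitute for the lattice argument.
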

In the case when $f$ is a weighted homogeneous polynomial with an isolated singularity at the origin and $Z=(f=0)$ is the reduced integral divisor, by Corollary \ref{C:B}, we obtain that $I_k(\lambda Z)/(\partial f)=\so^{\ge k+\lambda}/(\partial f)$ for all $k\in\rN$. By Remark \ref{weightsandlct}, we obtain further that $I_k(cZ)/(\partial f)=0$ for all $k\ge [n-\tilde{\alpha}_f]+1$. So we could relate the elements in the set $E$ to the jumping numbers and jumping coefficients defined as follows:

For any $k\in\rN$ and $k\le [n-\tilde{\alpha}_f]$, there exists $m\in\rN$ and rational numbers $0<c_{k_1}<c_2<\cdots<c_{k_m}<1$ such that for any $1\le i\le m-1$,
$$
\begin{cases}
I_k(\lambda Z)/(\partial f)=I_k(c_{k_i} Z)/(\partial f),&\textrm{if}~\lambda\in[c_{k_i},c_{k_{i+1}});\\
 I_k(c_{k_{i+1}}Z)/(\partial f)\neq I_k(c_{k_i}Z)/(\partial f).&\\
 \end{cases}
$$
We call $c_{k_i}$ jumping numbers of $I_k(cZ)/(\partial f)$ in (0,1). Note that jumping numbers usually cannot be defined directly for $I_k(cZ)$ when $k\ge 1$. This is because some generators of $I_k(cZ)$ might have coefficients depending on $c$, so we may get incomparable ideals. For example, let $f=x^2+y^5$ and $\frac{9}{10}<\alpha\le 1$. As we computed in Example \ref{Ex:notequal}, we cannot define jumping numbers for $I_1(cZ)$ since we get incomparable ideals when we change $c$ in $(\frac{9}{10},1)$ (see also \cite[discussion after Corollary 5.5]{MP18b}). Let $E_k$ denote the set of all jumping numbers of $I_k(cZ)/(\partial f)$ in (0,1) and call
$$N=\{r\in\rN: I_r(Z)/(\partial f)\neq I_{r+1}(\epsilon Z)/(\partial f),~\textrm{where}~0<\epsilon\ll 1\}$$
the set of jumping coefficients associated to $Z$. With these notations, by Theorem \ref{T:gra10}, the jumping numbers and jumping coefficients of $I_k(cZ)/(\partial f)$, for $0\le k\le [n-\tilde{\alpha}_f]$, determine the roots of Bernstein-Sato polynomial as follows:
\begin{corollary}\label{E:bsfroots}
Let $f$ be a weighted homogeneous polynomial with an isolated singularity at the origin and $Z=(f=0)$. Then
$$b_f(s)=(s+1)\prod_{k=0}^{[n-\tilde{\alpha}_f]}\prod_{\rho\in E_k}(s+\rho+k)\prod_{r\in N}(s+r+1).$$
See also Remark \ref{Re:vbroots}.
\end{corollary}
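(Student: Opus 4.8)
The plan is to reduce everything to Theorem~\ref{T:gra10}, which already writes $b_f(s)=(s+1)\prod_{\rho\in E}(s+\rho)$ in terms of the set $E$ of weights $\Vw(v_i)$ of the monomial basis, and then to reorganize the factors $\prod_{\rho\in E}(s+\rho)$ according to whether each weight $\rho$ is an integer or not. The bridge is the identity $I_k(\lambda Z)/(\partial f)=\so^{\ge k+\lambda}/(\partial f)$, recorded just before the statement as a consequence of Corollary~\ref{C:B}. Since $\so^{\ge c}/(\partial f)=\bigoplus_{\Vw(v_j)\ge c}\fC v_j$ by weighted homogeneity of the basis, this turns every question about the ideals $I_k(cZ)/(\partial f)$ into a question about how the threshold $k+c$ compares with the finitely many weights in $E$.

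First I would describe the jump locus of $\lambda\mapsto\so^{\ge k+\lambda}/(\partial f)$ for fixed $k$. Because the weights are discrete, this ideal is locally constant in $\lambda$ and changes precisely when $k+\lambda$ passes a value $\rho\in E$. Restricting to $\lambda\in(0,1)$ then selects exactly the non-integer weights: a weight $\rho\in E$ contributes a jumping number $\rho-k\in(0,1)$ for the unique $k=\lfloor\rho\rfloor$ with $k<\rho<k+1$. I would thus prove the set equality
$$\bigsqcup_{k=0}^{[n-\tilde{\alpha}_f]}\{\,\rho+k : \rho\in E_k\,\}=\{\,\rho\in E : \rho\notin\rZ\,\},$$
noting that the range $0\le k\le[n-\tilde{\alpha}_f]$ is forced by Remark~\ref{weightsandlct}, which gives $\max_i\Vw(v_i)=n-\tilde{\alpha}_f$ and hence $\so^{\ge k+\lambda}/(\partial f)=0$ once $k>[n-\tilde{\alpha}_f]$.

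Next I would identify the jumping coefficients with the integer weights. By the same identity, $I_r(Z)/(\partial f)=\so^{\ge r+1}/(\partial f)$ while $I_{r+1}(\epsilon Z)/(\partial f)=\so^{\ge r+1+\epsilon}/(\partial f)$ for $0<\epsilon\ll1$; these differ exactly when some basis element has weight precisely $r+1$, i.e.\ when $r+1\in E$. Hence $N=\{\,r\in\rN : r+1\in E\,\}$ and $\{\,r+1 : r\in N\,\}=\{\,\rho\in E : \rho\in\rZ_{\ge1}\,\}$. Combining the two matchings reconstitutes all of $E$ (integer weights via $N$, non-integer weights via the $E_k$), so that
$$\prod_{k=0}^{[n-\tilde{\alpha}_f]}\prod_{\rho\in E_k}(s+\rho+k)\cdot\prod_{r\in N}(s+r+1)=\prod_{\rho\in E}(s+\rho),$$
and multiplying by the common factor $(s+1)$ yields the claim through Theorem~\ref{T:gra10}.

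The main obstacle I anticipate is purely combinatorial bookkeeping at the boundary: ensuring that the half-open convention for the jumping numbers on $(0,1)$ and the separate treatment of the endpoint $\lambda=1$ (through the coefficients $N$) together cover each weight $\rho\in E$ exactly once, with the correct shift $\rho=(\rho-k)+k=r+1$. In particular one must check that an integer weight $\rho=1$, if it occurs, is accounted for by $r=0\in N$ and so doubles the explicit factor $(s+1)$ exactly as in Theorem~\ref{T:gra10}, and that the extremal weight $\rho=n-\tilde{\alpha}_f$, when integral, is captured by $r=[n-\tilde{\alpha}_f]-1\in N$ rather than being lost outside the range of $k$.
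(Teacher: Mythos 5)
Your proposal is correct and follows essentially the same route as the paper: the paper obtains this corollary directly from Theorem \ref{T:gra10} together with the identity $I_k(\lambda Z)/(\partial f)=\so^{\ge k+\lambda}/(\partial f)$ (from Corollary \ref{C:B}) and the bound $\max_j \Vw(v_j)=n-\tilde{\alpha}_f$ of Remark \ref{weightsandlct}, which is exactly the bookkeeping you carry out. Your explicit partition of $E$ into non-integer weights (recovered, with shift, from the jumping numbers $E_k$) and integer weights (recovered from the jumping coefficients $N$) is precisely the matching the paper leaves implicit, including the boundary cases you flag.
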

\section{Relation between Microlocal $V$-filtration and Hodge ideals}\label{S:microlocal}
In this section we assume that D is an effective $\fQ$-divisor and $D=\alpha\cdot H$, where $\alpha\in\fQ_{>0}$ and $H=\diV(h)$ with $h\in\so_X(X)$ and denote $Z=H_{red}=\textrm{Supp}(D)$. We assume $\lceil D\rceil=Z=(f=0)$. Then we define another series of ideals as follows:
\begin{definition}\cite[Definition 3.1]{MP18b} \label{microidealdef}
For each $k\ge 0$, we define an ideal sheaf
$$\tilde{I}_k(D):=\left\{v\in\mathcal{O}_X~|~\exists~v_0, v_1.\cdots, v_k=v\in\mathcal{O}_X~\textrm{such that}~\sum_{i=0}^kv_i\otimes\partial_t^i\in V^{\alpha}\iota_+\mathcal{O}_X\right\}.$$
\end{definition}

Recall the microlocalization of $\iota_+\so_X$, i.e. $\so_X[\partial_t,\partial_t^{-1}]$ has the microlocal $V$-filtration defined in (\ref{E:microlocalV}). Notice that it has an increasing filtration (see \cite[(1.2.3)]{Sai94}) given by
$$F_k\so_X[\partial_t, \partial_t^{-1}]=\sum\limits_{i\le k}\so_X\otimes \partial_t^i,~\forall~k\in\rZ.$$
So we get an induced microlocal $V$-filtration on $\so_X$ (identified with $\so_X\otimes 1$), denoted as $$\tilde{V}^\bullet \so_X:=V^\bullet\textrm{Gr}_0^F\so_X[\partial_t,\partial_t^{-1}].$$
When $0<\alpha\le 1$, by definition we have $\tilde{I}_k(D)=\tilde{V}^{k+\alpha}\mathcal{O}_X$. We know the following relationship with Hodge ideals, proved in \cite[Theorem 1]{Sai16} for $\alpha=1$, and extended to $\fQ$-divisors in \cite[Theorem A$'$]{MP18b}:

\begin{proposition} \label{P:HequalM}
If $D=\alpha Z$ where $Z$ is a reduced, effective divisor on $X$, defined by $f\in\so_X(X)$, then
$$I_k(D)=\tilde{I}_k(D)\mod (f),~\forall~k\in\rN.$$
\end{proposition}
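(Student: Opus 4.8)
The plan is to read off both ideals from the single description of the Hodge filtration furnished by Proposition \ref{prop31}, and then compare them coefficient by coefficient after reducing modulo $(f)$. Since $Z$ is reduced we have $H=Z$ and may take $h=f$, so $\beta=1-\alpha$ and, by Definition \ref{defHI}, an element $w\in\so_X$ lies in $I_k(D)$ exactly when
$$\frac{w}{f^{k+1}}f^{1-\alpha}\in F_k\dmM(f^{\beta}).$$
On the other side, by Definition \ref{microidealdef}, $\tilde{I}_k(D)$ is precisely the set of top coefficients $v_k$ of elements $\sum_{i=0}^{k}v_i\otimes\partial_t^{i}$ of $V^{\alpha}\iota_+\so_X$ lying in $\sum_{i=0}^{k}\so_X\otimes\partial_t^{i}$.

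First I would unwind Proposition \ref{prop31}, which gives
$$F_k\dmM(f^{\beta})=\Psi\left(V^{\alpha}\iota_+\so_X\cap\Big(\sum_{i=0}^{k}\so_X\otimes\partial_t^{i}\Big)\right).$$
By $\so_X$-linearity of $\Psi$ and the case $P=1$ of Lemma \ref{L:psidxaction}, a general element of the right-hand intersection is sent to
$$\Psi\Big(\sum_{i=0}^{k}v_i\otimes\partial_t^{i}\Big)=\sum_{i=0}^{k}Q_i(\alpha)\frac{v_i}{f^{i+1}}f^{1-\alpha}.$$
Matching this against $\tfrac{w}{f^{k+1}}f^{1-\alpha}$ and clearing denominators yields the key identity
$$w=\sum_{i=0}^{k}Q_i(\alpha)\,v_i\,f^{k-i}\qquad\text{in }\so_X,$$
which represents every $w\in I_k(D)$ through the coefficients of a suitable element of $V^{\alpha}\iota_+\so_X$, and conversely produces an element of $I_k(D)$ from any such tuple $(v_0,\dots,v_k)$.

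The comparison modulo $(f)$ is then immediate: every summand with $i<k$ carries a factor $f^{k-i}$ with $k-i\ge 1$, so
$$w\equiv Q_k(\alpha)\,v_k\pmod{(f)},$$
and $Q_k(\alpha)=\alpha(\alpha+1)\cdots(\alpha+k-1)$ is a unit in $\so_X$ because $\alpha>0$. Hence for $w\in I_k(D)$ we obtain $w\equiv Q_k(\alpha)v_k\in\tilde{I}_k(D)\pmod{(f)}$, while starting from $v=v_k\in\tilde{I}_k(D)$ the identity produces $w\in I_k(D)$ with $w\equiv Q_k(\alpha)v\pmod{(f)}$; dividing by the unit $Q_k(\alpha)$ gives the reverse inclusion, so $I_k(D)=\tilde{I}_k(D)\bmod(f)$. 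The one point I would take care over is the interface between the two definitions: namely that the coefficient of $\partial_t^k$ that $\Psi$ records is exactly the datum defining $\tilde{I}_k(D)$, and that the lower-order corrections weighted by $Q_i(\alpha)$ for $i<k$ all disappear mod $(f)$. This is the whole content of the argument, and it works because $\Psi$ is $\so_X$-linear and each factor $f^{k-i}$ vanishes mod $(f)$ for $i<k$.
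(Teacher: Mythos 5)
Your argument is correct, but there is nothing in the paper to compare it against: the paper does not prove Proposition \ref{P:HequalM} at all, it quotes it as a known result, proved for $\alpha=1$ in \cite[Theorem 1]{Sai16} and extended to $\fQ$-divisors in \cite[Theorem A$'$]{MP18b}. What you have produced is a self-contained derivation, in the relevant case $H=Z$ reduced and $h=f$, from two ingredients the paper establishes independently of this proposition, namely Proposition \ref{prop31} and Lemma \ref{L:psidxaction}; in particular the argument is not circular. The mechanism is exactly right: $\Psi$ is additive, so Lemma \ref{L:psidxaction} with $P=1$ converts the description of $F_k\dmM(f^{\beta})$ in Proposition \ref{prop31} into the identity $w=\sum_{i=0}^{k}Q_i(\alpha)\,v_i\,f^{k-i}$, which matches elements $w\in I_k(D)$ with tuples $(v_0,\dots,v_k)$ such that $\sum_i v_i\otimes\partial_t^i\in V^{\alpha}\iota_+\so_X$, i.e.\ with the data defining $\tilde{I}_k(D)$; reduction mod $(f)$ kills all terms with $i<k$, and $Q_k(\alpha)=\alpha(\alpha+1)\cdots(\alpha+k-1)$ is a nonzero constant precisely because $\alpha>0$, which yields both inclusions. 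In substance this reconstructs, from the paper's own toolkit, the $V$-filtration comparison argument of \cite{MP18b} (their isomorphism $\Phi$ and its compatibility with $V^{\bullet}$ is what powers Proposition \ref{prop31}). What your route buys is that the paper becomes self-contained on this point and the precise mechanism of the congruence --- the unit $Q_k(\alpha)$ and the factors $f^{k-i}$ --- is made explicit; note also that you use only $\alpha>0$, neither $\alpha\le 1$ nor weighted homogeneity, so your proof matches the generality of the cited statement.
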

When $f$ is a weighted homogeneous polynomial with an isolated singularity at the origin, M. Saito gives an effective way of describing $\tilde{I}_k(D)$ using the lower level $\tilde{I}_{k-1}(D)$ and a monomial basis $\{v_1,\cdots, v_{\mu}\} $ of the Jacobian algebra $\fC\{\row xn\}/(\partial f)$, where $\mu$ is the Milnor number:

\begin{proposition}\cite[Proposition in (2.2)]{Sai16} \label{P:saitomicrovfil} If $D=\alpha Z$, where $0<\alpha\le 1$ and $Z=(f=0)$ is an integral and reduced effective divisor defined by $f$, a weighted homogeneous polynomial with an isolated singularity at the origin \footnote{Note the assumption for \cite[Proposition in (2.2)]{Sai16}  that ``$f$ contains monomials of type $x_i^{a_i}$ for any $i\in [1,n]$" is not necessary, as confirmed by Saito \cite{Sai18}.}, then:
$$\tilde{I}_0(D)=\so^{\ge \alpha}$$
and 
$$\tilde{I}_p(D)=\sum_{v_j\in \so^{\ge \alpha+p}}\mathcal{O}_Xv_j+(\partial_1f,\cdots, \partial_nf)\tilde{I}_{p-1}(D),~\hspace{10pt}\forall~p\ge 1.$$
\end{proposition}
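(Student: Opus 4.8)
My plan is to read off $\tilde{I}_k(D)$ directly from its definition as a set of ``top coefficients'' and then feed in the formula for $F_kV^\alpha\iota_+\so_X$ that was already extracted from Saito's Lemma \ref{L:421} in the proof of Theorem \ref{thma}. First I would observe that in Definition \ref{microidealdef} any expression $\sum_{i=0}^k v_i\otimes\partial_t^i$ automatically lies in $F_k\iota_+\so_X$, so the condition $\sum_{i=0}^k v_i\otimes\partial_t^i\in V^\alpha\iota_+\so_X$ is equivalent to $\sum_{i=0}^k v_i\otimes\partial_t^i\in F_kV^\alpha\iota_+\so_X$. Hence $\tilde{I}_k(D)$ is precisely the image of $F_kV^\alpha\iota_+\so_X$ under the $\so_X$-linear projection $\pi_k$ onto the $\partial_t^k$-component. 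Since the proof of Theorem \ref{thma} gives
$$F_kV^\alpha\iota_+\so_X=\sum_{i=0}^k F_{k-i}\doD_X\big(\so^{\ge\alpha+i}\otimes\partial_t^i\big),$$
the task reduces to computing $\pi_k$ on each summand, where I write $(\partial f):=(\partial_1f,\ldots,\partial_nf)$ for the Jacobian ideal.

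To do this I would use the graph-embedding rule (\ref{E:daction}), namely $\partial_{x_j}(m\otimes\partial_t^l)=\partial_{x_j}(m)\otimes\partial_t^l-\partial_jf\cdot m\otimes\partial_t^{l+1}$: an operator of order $r$ raises the $\partial_t$-degree by at most $r$, and the degree-raising part of $\partial_x^\nu$ with $\abs{\nu}=r$ sends $g\otimes\partial_t^i$ to $(-1)^r\big(\prod_j(\partial_jf)^{\nu_j}\big)g\otimes\partial_t^{i+r}$. Consequently $\pi_k$ kills every operator of order $<k-i$, while the order-$(k-i)$ part of $F_{k-i}\doD_X$ produces exactly the ideal $(\partial f)^{k-i}\so^{\ge\alpha+i}$ (using that $\so^{\ge c}$ is an ideal of $\so_X$ because the weights are positive, and reading $i=k$ as the term $\so^{\ge\alpha+k}$). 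Summing over $i$ yields the closed form
$$\tilde{I}_k(D)=\so^{\ge\alpha+k}+\sum_{i=0}^{k-1}(\partial f)^{k-i}\so^{\ge\alpha+i},$$
which for $k=0$ is the asserted base case $\tilde{I}_0(D)=\so^{\ge\alpha}$.

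To turn the closed form into the stated recursion I would expand $(\partial f)\tilde{I}_{k-1}(D)=(\partial f)\so^{\ge\alpha+k-1}+\sum_{i=0}^{k-2}(\partial f)^{k-i}\so^{\ge\alpha+i}$, which reproduces every summand of the closed form with $i\le k-1$. Thus the recursion collapses to the single identity
$$\so^{\ge\alpha+k}+(\partial f)\so^{\ge\alpha+k-1}=\Big(\sum_{v_j\in\so^{\ge\alpha+k}}\so_Xv_j\Big)+(\partial f)\so^{\ge\alpha+k-1}.$$
The inclusion ``$\supseteq$'' is immediate since each such $v_j$ lies in the ideal $\so^{\ge\alpha+k}$. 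For ``$\subseteq$'' I would invoke the graded splitting $\so_X=\big(\bigoplus_j\fC v_j\big)\oplus(\partial f)$ afforded by the monomial basis of the Jacobian algebra (both summands are weighted homogeneous): it writes any $u\in\so^{\ge\alpha+k}$, weight by weight, as a $\fC$-combination of basis monomials $v_j$ of weight $\ge\alpha+k$ plus an element of $(\partial f)\cap\so^{\ge\alpha+k}$. What then remains is the key lemma $(\partial f)\cap\so^{\ge c}\subseteq(\partial f)\so^{\ge c-1}$: given a weighted-homogeneous $\theta\in(\partial f)$ with $\Vw(\theta)\ge c$, I write $\theta=\sum_iB_i\partial_if$ with $B_i$ weighted homogeneous, and since $\Vw(\partial_if)=\Vw(f)-w_i$, comparing weighted degrees forces $\Vw(B_i)=\Vw(\theta)-1+w_i>c-1$, so each $B_i\in\so^{\ge c-1}$ and hence $\theta\in(\partial f)\so^{\ge c-1}$.

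I expect the only real obstacle to be this key lemma $(\partial f)\cap\so^{\ge c}\subseteq(\partial f)\so^{\ge c-1}$; everything else is bookkeeping once (\ref{E:daction}) and the formula for $F_kV^\alpha\iota_+\so_X$ are in place. This is exactly the step where weighted homogeneity and the isolated-singularity hypothesis enter, since they guarantee both the clean graded splitting by the Jacobian basis and the strict weight gain $w_i>0$ that lets a relation in $(\partial f)$ be rewritten with coefficients of weight one less. The reduction of the whole proposition to this comparison between the weight filtration and the $(\partial f)$-adic structure is formal.
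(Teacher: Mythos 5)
Your argument is correct, but note that the paper itself gives \emph{no} proof of Proposition \ref{P:saitomicrovfil}: it is quoted from \cite{Sai16}, with a footnote invoking personal communication \cite{Sai18} to drop the hypothesis that $f$ contains monomials $x_i^{a_i}$. So your proposal is necessarily a different route, and a worthwhile one. You derive the statement from the identity $F_kV^{\alpha}\iota_+\so_X=\sum_{i=0}^kF_{k-i}\doD_X(\so^{\ge\alpha+i}\otimes\partial_t^i)$, which the paper already extracts from Lemma \ref{L:421} inside the proof of Theorem \ref{thma}, by observing that Definition \ref{microidealdef} makes $\tilde{I}_k(D)$ exactly the image of $F_kV^{\alpha}\iota_+\so_X$ under projection $\pi_k$ to the $\partial_t^k$-coefficient; the rule (\ref{E:daction}) then turns each summand into the ideal $(\partial f)^{k-i}\so^{\ge\alpha+i}$, giving the closed form $\tilde{I}_k(D)=\sum_{i=0}^k(\partial f)^{k-i}\so^{\ge\alpha+i}$, and the passage to Saito's recursion reduces to your key lemma $(\partial f)\cap\so^{\ge c}\subseteq(\partial f)\so^{\ge c-1}$, which is correct by the weighted-degree count $\Vw(B_i)=\Vw(\theta)-1+w_i>c-1$. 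This buys two things the citation does not: it makes the paper self-contained on this point (everything used — Lemma \ref{L:421}, the microlocal $V$-filtration (\ref{E:microlocalV}), and the graph-embedding formulas — is already in Sections 2--3, with no circularity, since Proposition \ref{P:saitomicrovfil} is never used in the proof of Theorem \ref{thma}), and it proves the footnote's claim, since your argument nowhere uses that $f$ contains the monomials $x_i^{a_i}$. Two small points deserve explicit justification in a write-up: (i) the graded splitting $\so_X=\big(\bigoplus_j\fC v_j\big)\oplus(\partial f)$ and your key lemma manipulate weighted-homogeneous components of \emph{convergent} power series, so you should note that $(\partial f)$ and $\so^{\ge c}$ are generated by weighted-homogeneous elements and that ideals of the Noetherian local ring $\fC\{x_1,\ldots,x_n\}$ are closed under the $\mathfrak{m}$-adic convergence used to resum the components (or reduce to the formal case by faithful flatness of $\fC[[x]]$ over $\fC\{x\}$); and (ii) in the key lemma the coefficients $B_i$ must be \emph{chosen} weighted homogeneous of degree $\Vw(\theta)-1+w_i$, which is possible precisely because the generators $\partial_if$ are weighted homogeneous — this is where weighted homogeneity enters irreplaceably, as you say.
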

\begin{remark}\label{Re:vbroots}
One consequence of Saito's result is that the roots of the Bernstein-Sato polynomial associated to $f$ are related to the jumping numbers of $\tilde{V}^c\so_X$. In the setting of Theorem \ref{T:gra10}, by Proposition \ref{P:saitomicrovfil}, the microlocal $V$-filtration $\tilde{V}^\bullet\so_X$ modding out $(\partial f)$ determines the following jumping numbers:
There exists rational numbers $\tilde{\alpha}_f=c_1<c_2<\cdots<c_m=n-\tilde{\alpha}_f$ such that for any $1\le i\le m-1$,
$$
\begin{cases}
\tilde{V}^\lambda\so_X/(\partial f)=\tilde{V}^{c_i}\so_X/(\partial f),&\textrm{if}~\lambda\in[c_i,c_{i+1});\\
 \tilde{V}^{c_{i+1}}\so_X/(\partial f)\neq \tilde{V}^{c_i}\so_X/(\partial f).&\\
\end{cases}
$$
With the above notations and by Theorem \ref{T:gra10}, the Bernstein-Sato polynomial is:
$$b_f(s)=(b+1)\prod_{i=1}^m(s+c_i).$$
Notice that $I_k(cZ)=\tilde{I}_k(cZ)\mod (f)$ by Proposition \ref{P:HequalM}, and we have $f\in (\partial f)$ by the definition of weighted homogeneous polynomials. Thus $I_k(cZ)=\tilde{I}_k(cZ)\mod (\partial f)$. So it is not surprising that the roots of the Bernstein-Sato polynomial are also related to the jumping numbers and jumping coefficients associated to ideals of the form $I_k(cZ)/(\partial f)$ as in  Corollary \ref{E:bsfroots}.
\end{remark}
Now we give the proof of Proposition \ref{P:hodgemicroequality}, which gives a criterion for equality between $I_k(D)$ and $\tilde{I}_k(D)$:
 \begin{proof}[\textbf{Proof of Proposition} \ref{P:hodgemicroequality}]
 Comparing $I_{k+1}(D)$ and $\tilde{I}_{k+1}(D)$ by Corollary \ref{C:B}  and by Proposition \ref{P:saitomicrovfil}, clearly it suffices to show that, if $I_k(D)=\tilde{I}_k(D)=(x_1,\ldots, x_n)^m$ for some $m\in \rN$, then 
 $$\sum_{i=1}^n\sum_{a\in (x_1,\ldots, x_n)^m}\so_X\big{(}\partial_ia f-(\alpha+k)a\partial_if\big{)}=(\partial_if,\ldots,\partial_nf)(x_1,\ldots, x_n)^m.$$
 
Fix $k\in\rN$. Now suppose $ I_k(D)=(\row xn)^m$ for some $m\in \rN$ and take any element $a\in (x_1,\ldots, x_n)^m$. 
Since $f$ is a weighted homogeneous polynomial, we have the formula $$f=\sum_{i=1}^nw_ix_i\partial_if.$$ 
Then we get $$f\partial_ia=\sum_{j=1}^nw_jx_j\partial_jf\partial_ia.$$
Clearly $x_j\partial_ia\in (\row xn)^m$, so $f\partial_ia-\alpha\partial_ifa\in (\partial_1f,\ldots, \partial_nf)(x_1,\ldots, x_n)^m$. Thus
$$\sum_{i=1}^n\sum_{a\in (x_1,\ldots, x_n)^m}\so_X\big{(}\partial_ia f-(\alpha+k)a\partial_if\big{)}\subseteq(\partial_if,\ldots,\partial_nf)(x_1,\ldots, x_n)^m.$$
On the other hand, for any $A\in\rN^n$ with $\abs{A}=m-1$, and for any $i\in\{1,\cdots,n\}$, consider
$$L_i(A):=\partial_i(x^{A}x_i)f-(\alpha+k) \partial_ifx^{A}x_i=\big{(}(a_i+1)w_i-\alpha-k\big{)}x^{A}x_i\partial_if+\sum_{j\neq i}(a_i+1)w_jx^{A}x_j\partial_jf.$$
After easy computation, we can express $x^{A}x_i\partial_if$ as
  $$x^{A}x_i\partial_if=\frac{(a_i+1)\sum_{j=1}^n w_jL_j(A)}{(\alpha+k)\big{(}\Vw(x^{A})-\alpha-k\big{)}}-\frac{L_i(A)}{\alpha+k}.$$
 Moreover, for any $B\in\rN^n$ with $\abs{B}=k$ and $b_i=0$, $x^{B}\partial_if=-\frac{1}{\alpha+k}\big{(}\partial_i(x^{B})f-(\alpha+k)x^{B}\partial_if\big{)}$. Hence, for any $i$, and any $a\in (x_1,\ldots, x_n)^m$, we obtain
 $$a\partial_if\in \sum_{i=1}^n\sum_{a\in I_k(D)}\so_X\big{(}f\partial_ia-(\alpha+k)a\partial_if\big{)}.$$
 Thus we proved the reverse inclusion of sets and finish the proof.
\end{proof}
Moreover we prove in the following that Conjecture \ref{Cj:hodgemicro} is true when $k=0$:
\begin{proposition}\label{P:firstofconj} We assume $D=\alpha Z$, where $0<\alpha\le 1$ and $Z=(f=0)$ is an integral and reduced effective divisor defined by $f$, a weighted homogeneous polynomial with an isolated singularity at the origin. If $I_0(D)\neq(\row xn)^p$ for any $p\in \rN$, then $I_1(D)\neq \tilde{I}_1(D)$. 
\end{proposition}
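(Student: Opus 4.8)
The plan is to produce an explicit weighted-homogeneous element lying in $I_1(D)$ but not in $\tilde I_1(D)$. By Corollary \ref{C:B},
$$I_1(D)=\so^{\ge 1+\alpha}+\sum_{1\le i\le n,\,a\in I_0(D)}\so_X\big{(}f\partial_ia-\alpha a\partial_if\big{)},$$
while Proposition \ref{P:saitomicrovfil} gives $\tilde I_1(D)=\sum_{v_j\in\so^{\ge 1+\alpha}}\so_Xv_j+(\partial f)I_0(D)$, where I abbreviate the Jacobian ideal $(\partial_1f,\ldots,\partial_nf)$ by $(\partial f)$. All of these are weighted-homogeneous (graded) ideals, so membership of a weighted-homogeneous element is decided inside its own weighted degree. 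The key observation is that $\so_Xv_j\subseteq\so^{\ge\rho(v_j)}\subseteq\so^{\ge1+\alpha}$, so in every degree with $\rho<1+\alpha$ the $v_j$-part of $\tilde I_1(D)$ contributes nothing and there $\tilde I_1(D)$ agrees with $(\partial f)I_0(D)$. Thus it suffices to exhibit one weighted-homogeneous $y$ with $\rho(y)<1+\alpha$ lying in the derivation part of $I_1(D)$ (hence in $I_1(D)$ by Corollary \ref{C:B}) but not in $(\partial f)I_0(D)$.

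To extract a witness from the hypothesis, note that $I_0(D)=\so^{\ge\alpha}$ is a monomial ideal, and being different from every $(x_1,\ldots,x_n)^p$ forces some total degree $d$ to be \emph{mixed}, i.e.\ to contain monomials both inside and outside $\so^{\ge\alpha}$; otherwise monotonicity of membership in total degree would identify $I_0(D)$ with a power of the maximal ideal. Comparing the extreme monomials $x_{i_{\min}}^d$ and $x_{i_{\max}}^d$ of that degree (with $w_{i_{\min}},w_{i_{\max}}$ the smallest and largest weights) gives $w_{i_{\min}}<w_{i_{\max}}$ and $\rho(x_{i_{\min}}^d)<\alpha\le\rho(x_{i_{\max}}^d)$. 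Swapping one factor $x_{i_{\min}}$ for $x_{i_{\max}}$ at a time and letting $M_{a_0}$ be the first monomial along the way with $\rho\ge\alpha$ (so $a_0\ge1$), I set $x^B:=M_{a_0}/x_{i_{\max}}$ and obtain $x^Bx_{i_{\max}}\in I_0(D)$ while $x^Bx_{i_{\min}}\notin I_0(D)$. The candidate is
$$y:=f\partial_{i_{\max}}(x^Bx_{i_{\max}})-\alpha\, x^Bx_{i_{\max}}\,\partial_{i_{\max}}f,$$
a generator of the derivation part of $I_1(D)$; since $x^Bx_{i_{\min}}\notin I_0(D)$ forces $x^B\notin I_0(D)$, it is weighted-homogeneous with $\rho(y)=\rho(x^B)+1<1+\alpha$, as required.

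It remains to show $y\notin(\partial f)I_0(D)$, which is the main obstacle. Using $x^Bx_{i_{\max}}\partial_{i_{\max}}f\in(\partial f)I_0(D)$ together with $f\partial_{i_{\max}}(x^Bx_{i_{\max}})=(B_{i_{\max}}+1)\,x^Bf$, this reduces to $x^Bf\notin(\partial f)I_0(D)$. Here I would invoke that, because $f$ has an isolated singularity, $\partial_1f,\ldots,\partial_nf$ is a regular sequence in $\so_X$, so its first syzygies are Koszul and every component of a relation lies in $(\partial f)$. Suppose $x^Bf=\sum_i(\partial_if)s_i$ with weighted-homogeneous $s_i\in I_0(D)$; subtracting the Euler relation $x^Bf=\sum_i(\partial_if)(w_ix_ix^B)$ gives $\sum_i(\partial_if)(s_i-w_ix^Bx_i)=0$, so each $s_i-w_ix^Bx_i\in(\partial f)$. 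In the $i=i_{\min}$ slot, $s_{i_{\min}}$ is a weighted-homogeneous element of $\so^{\ge\alpha}$ of weight $\rho(x^Bx_{i_{\min}})<\alpha$, hence $s_{i_{\min}}=0$, and therefore $x^Bx_{i_{\min}}\in(\partial f)$. This is defeated by a weight bound: the lowest weight occurring in $(\partial f)$ is $\rho(\partial_{i_{\max}}f)=\tilde{\alpha}_f+1-w_{i_{\max}}\ge 1\ge\alpha>\rho(x^Bx_{i_{\min}})$ (using $\tilde{\alpha}_f\ge w_{i_{\max}}$ and the standing assumption $\alpha\le1$), so $x^Bx_{i_{\min}}$ is too light to lie in the Jacobian ideal. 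The contradiction yields $x^Bf\notin(\partial f)I_0(D)$, whence $y\notin\tilde I_1(D)$ while $y\in I_1(D)$, so $I_1(D)\neq\tilde I_1(D)$. The only input beyond Corollary \ref{C:B} and Proposition \ref{P:saitomicrovfil} is the regular-sequence (Koszul) structure of $(\partial f)$, valid since the singularity is isolated; everything else is weight bookkeeping made rigid by $\alpha\le1$.
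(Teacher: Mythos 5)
Your proof is correct and follows essentially the same route as the paper's: both produce the witness $f\partial_i(x^Bx_i)-\alpha\, x^Bx_i\,\partial_if$ from a monomial $x^B$ with $x^Bx_i\in I_0(D)$ but $x^Bx_j\notin I_0(D)$ for another index $j$, and both rule out membership in $\tilde I_1(D)$ via the Euler relation, the Koszul syzygies of the regular sequence $(\partial_1f,\ldots,\partial_nf)$, and weighted-degree bookkeeping. The only cosmetic differences are that you construct the ``mixed'' monomial explicitly by the swapping argument (the paper merely asserts its existence) and you phrase the final contradiction as a weight bound on the Jacobian ideal, whereas the paper derives $x^\eta x_{j_1}\in\so^{\ge\alpha}$ directly, which is the same obstruction since $(\partial_1 f,\ldots,\partial_n f)\subseteq\so^{\ge 1}\subseteq\so^{\ge\alpha}$.
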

\begin{proof}
We assume that $I_0(D)$ is not equal to $(\row xn)^p$ for any $p\in\rN$. Notice that $I_0(D)=\so^{\ge \alpha}$ is a monomial ideal. So there exists a nonempty set $E$ of vectors in $\rN^n$ such that for any $\eta=(\eta_1,\cdots, \eta_n)\in E$, there exist two nonempty sets $I_{\eta}$, $J_\eta$, and $I_\eta\cup J_\eta=\{1,\ldots, n\}$, such that

$$x^\eta x_{i}\in I_0(D)=\so^{\ge \alpha},~\forall ~i\in I_\eta$$
and 
$$x^\eta x_{j}\notin I_0(D)=\so^{\ge \alpha},~\forall~ j\in J_\eta.$$
For any $\eta\in E$, we get
$x^\eta\notin \so^{\ge\alpha}$ because $x^\eta x_j\notin \so^{\ge \alpha}$ for any $ j\in J_\eta$. Then we know 
\begin{equation}\label{eq2}
x^\eta x_t\partial_tf\notin \so^{\ge \alpha+1},~\textrm{for any}~t,~1\le t\le n.
\end{equation}

Denote, for any $q$, $1\le q\le n$, $$L_q(\eta):=\partial_q(x^{\eta}x_q)f-(\alpha+k) x^{\eta}x_q\partial_qf.$$ 

We will show that: For any $i\in I_\eta$, $L_i(\eta)\in I_{1}(D)\backslash \tilde{I}_{1}(D).$

 Fix any $\eta\in E$ and denote $J_\eta=\{j_1,\ldots, j_s\}$.
 For any $i\in I_\eta$, write 
 $$L_i(\eta)=\big{(}(\eta_i+1)w_i-\alpha-k\big{)}x^\eta x_i\partial_if+(\eta_i+1)\cdot\sum_{j\neq i}w_jx^\eta x_j\partial_jf.$$
 Now we suppose $L_i(\eta)\in \tilde{I}_1(D).$ Since $x^\eta x_i\partial_if\in \tilde{I}_1(D)$ for any $i\in I_\eta$, we obtain
 $$\sum_{t=1}^sw_{j_t} x^\eta x_{j_t}\partial_{j_t}f\in \tilde{I}_1(D)=\so^{\ge\alpha+1}+(\partial_1f,\ldots,\partial_nf)\so^{\ge\alpha}.$$
 Knowing (\ref{eq2}), we consider the above sum modding out all the partial derivative of $f$ except $\partial_{j_1}f$. Then we have $$x^\eta x_{j_1}\partial_{j_1}f\equiv \partial_{j_1}f\cdot g \mod (\partial_1f,\ldots,\partial_{j_1-1}f,\partial_{j_1+1}f,\ldots, \partial_nf),~\textrm{for some}~g\in \so^{\ge\alpha}.$$
 The assumption that $f$ has an isolated singularity at the origin implies that $\{\partial_1f,\ldots, \partial_nf\}$ is a regular sequence for $\so_X$. This is possible only if $$x^\eta x_{j_1}-g\in(\partial_1f,\ldots,\partial_{j_1-1}f,\partial_{j_1+1}f,\ldots, \partial_nf)\subseteq \so^{\ge\alpha}.$$ 
 However,  this contradicts to $x^\eta x_{j_1}\notin \so^{\ge\alpha}$. So we obtain $L_i(\eta)\notin \tilde{I}_1(D)$. Clearly $L_i(\eta)\in I_1(D)$, hence $I_1(D)\neq \tilde{I}_1(D)$. 
 \end{proof}
 The following example shows that, given a weighted homogeneous polynomial with an isolated singularity at the origin, when changing the coefficient $\alpha$, $I_0(\alpha Z)$ may be equal to some power of the maximal ideal, which implies $I_1(D)=\tilde{I}_1(D)$, or equal to some ideal lacking symmetry so that $I_1(D)\neq \tilde{I}_1(D)$, which reflects the above proposition.
\begin{example}\label{Ex:notequal}
Let $f=x^2+y^5$ and $Z=\diV(f)$, $D=\alpha Z$ where $0<\alpha\le 1$. We get the weights are $w=(w_1,w_2)=(\frac{1}{2},\frac{1}{5})$. Fix a monomial basis $\{1,y,y^2,y^3\}$ of the Jacobian algebra $\fC\{x,y\}/(\partial f)$. We get $\rho(1)=\frac{7}{10}$, $\rho(y)=\frac{9}{10}$, $\rho(y^2)=\frac{11}{10}$ and $\rho(y^3)=\frac{13}{10}$.
\begin{itemize}
\item When $0<\alpha\le\frac{7}{10}$, we have $I_0(\alpha Z)=\so_X$ by Corollary \ref{C:B}. Then we see from Corollary \ref{C:B} and Proposition \ref{P:hodgemicroequality} that $I_1(D)=\tilde{I}_1(D)=(x,y^2)$ if $0<\alpha\le\frac{1}{10}$; $I_1(D)=\tilde{I}_1(D)=(x,y^3)$ if $\frac{1}{10}<\alpha\le\frac{3}{10}$; and $I_1(D)=\tilde{I}_1(D)=(x,y^4)$ if $\frac{3}{10}<\alpha\le\frac{7}{10}$.
\item When $\frac{7}{10}<\alpha\le \frac{9}{10}$, similarly we have $I_0(\alpha Z)=(x,y)$ and $I_1(D)=\tilde{I}_1(D)=(x,y^4)(x,y)$.

\item When $\frac{9}{10}<\alpha\le 1$, similarly we have $I_0(\alpha Z)=(x,y^2)$. We will see $I_1(D)\neq \tilde{I}_1(D)$. 
In fact, by Corollary \ref{C:B},
\begin{align*}
I_1(D)&=\so^{\ge\alpha+1}+\sum_{1\le i\le 2, a\in \so^{\ge \alpha}}\so_X\big{(}f\partial_ia-\alpha a\partial_if\big{)}\\
&=(x^3, x^2y^2, xy^4, y^7)+\big{(}(1-2\alpha)x^2+y^5, xy^2, 2x^2y+(2-5\alpha)y^6\big{)}\\
&=\big{(}(1-2\alpha)x^2+y^5, xy^2, 2x^2y+(2-5\alpha)y^6\big{)},\\
\end{align*} 
and  by Proposition \ref{P:saitomicrovfil}, 
\begin{align*}
\tilde{I}_1(D)&=\sum_{v_j:\Vw(v_j)\ge \alpha+1}\mathcal{O}_Xv_j+(\partial f)I_0(D)\\
&=(x,y^4)(x,y^2)=(x^2, xy^2, y^6).\\
\end{align*}
Clearly $(1-2\alpha)x^2+y^5\in I_1(D)\backslash \tilde{I}_1(D)$ and $x^2\in \tilde{I}_1(D)\backslash I_1(D)$.
\end{itemize}
\end{example}


  \section{Non-degenerate case}\label{S:nondegenerate}
  In this section, we extend Theorem \ref{thma} to the case that $D=\alpha Z$, where $0<\alpha\le 1$ and $Z=(f=0)$ is an integral and reduced effective divisor defined by $f$, a germ of holomorphic function that has non-degenerate Newton boundary with respect to the local coordinates and has an isolated singularity at the origin. The proof is very similar. One only needs to point out that a version of Lemma \ref{L:421} still holds.
 To state the results in this section, we denote:
\begin{itemize}
\item $\so=\fC\{x_1,\ldots, x_n\}$ the ring of germs of holomorphic function for local coordinates $x_1,\ldots, x_n$.

\item
$f:(\fC^n,0)\to (\fC,0)$ a germ of holomorphic function, and we write 
$$f=\sum_{A\in\rN^n}f_Ax^A,$$
where $A=(a_1,\ldots, a_n)$ and $x^A=x_1^{a_1}\cdots x_n^{a_n}$. 

\item $N(f)=\{A\in\rN^n: f_A\neq 0\}$.

\item $\Gamma=\Gamma(f)$ the union of compact faces of the convex hull of $N(f)+\rN^n$ in $(\fR^+)^n$.

\item $\mathcal{F}$ the set of faces in $\Gamma$ of dimension $n-1$.

\item $\mathcal{F}'$ the set of faces of dimension $n-1$ of the convex hull of $N(f)+\rN^n$ in $(\fR^+)^n$ that are not contained in any hyperplane $(x_i=0)$ for any $1\le i\le n$.

\item $B_F$ for $F\in\mathcal{F}'$, the unique vector of $(\fQ^+)^n$ such that
$$\langle A, B_F\rangle=1,~\forall~A\in F.$$
Denote $B_F=(b_{1,F},\cdots, b_{n,F})$ and $$\abs{B_F}=\sum_{i=1}^n b_{i,F}.$$

\item $\tilde{\rho}_F(g)$ the weight of an element $g=\sum g_Ax^A\in\so$ with respect to $F\in\mathcal{F}'$ defined by $$\tilde{\rho}_F(g)=\abs{B_F}+\inf\{\langle A, B_F\rangle: g_A\neq 0\}.$$

\item $\tilde{\rho}$ the weight of $g$ with respect to $\Gamma$ defined by
$$\tilde{\rho}(g)=\inf\{\tilde{\rho}_F(g): F\in\mathcal{F}'\}.$$
 The weight function $\tilde{\rho}$ defines a filtration on $\so$ as 
$$\tilde{\so}^{>p}=\{u\in\so: \tilde{\rho}(u)>p\};$$
$$\tilde{\so}^{\ge p}=\{u\in\so:\tilde{\rho}(u)\ge p\}.$$
\item $\hat{\rho}_F(g)$ the unshifted weight of an element $g=\sum g_Ax^A\in\so$ with respect to $F\in\mathcal{F}'$ defined by $$\hat{\rho}_F(g)=\inf\{\langle A, B_F\rangle: g_A\neq 0\}.$$

\item $\hat{\rho}$ the unshifted weight of $g$ with respect to $\Gamma$ defined by
$$\hat{\rho}(g)=\inf\{\hat{\rho}_F(g): F\in\mathcal{F}'\}.$$
Clearly, $\tilde{\rho}(g)=\hat{\rho}(gx_1x_2\ldots x_n)$.

\end{itemize}
 \begin{remark}
 Supposing $f$ is a weighted homogeneous polynomial, under the assumption that $\Gamma$ intersects with each coordinate axis at one point, i.e., for any $1\le i\le n$, there exists $a_i\in\rN$ such that $x_i^{a_i}$ has nonzero coefficient in the expression of $f$, the weight function $\tilde{\rho}$ defined as above coincide with the weight function $\rho$ defined in (\ref{E:wtfunction}); otherwise, these two weight functions may not be the same.
 \end{remark}
\begin{definition}\label{D:convandnond}
We say that $f$ is \emph{convenient} if $\Gamma$ intersects with each coordinate axis at one point. We say that $f$ has \emph{non-degenerate Newton boundary with respect to the local coordinates} if for any face $F\in\Gamma$ (of any dimension), the restriction of $f$ to $F$, i.e., $$f|_F=\sum_{A\in F}f_Ax^A$$
satisfies the condition:
$$x_1\frac{\partial f|_F}{\partial x_1}=\cdots=x_n\frac{\partial f|_F}{\partial x_n}=0~\Rightarrow~ x_1x_2\cdots x_n=0.$$
\end{definition}
We will use the following result which illustrates a property of the weight function $\tilde{\rho}$ :
\begin{lemma}\cite[Proposition B.1.2.3.(ii)*]{BGMM89}) \label{L:weightfunctiong}
If we assume $f$ is a germ of holomorphic function that has non-degenerate Newton boundary with respect to the local coordinates and convenient, then any element $g\in (\partial f)$ can be written as
$$g=g_1\partial_1f+\cdots+g_n\partial_nf$$
such that 
$$\tilde{\rho}(g_j\partial_jf)\ge \tilde{\rho}(g),$$
$$\tilde{\rho}(g_j)\ge \tilde{\rho}(g)-1+\hat{\rho}(x_j),$$
and
$$\tilde{\rho}(\frac{\partial g_j}{\partial x_j})\ge \tilde{\rho}(g)-1$$ for any $1\le j\le n$.
\end{lemma}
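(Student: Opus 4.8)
The plan is to show that all three estimates are consequences of a single, more symmetric, per-face bound, and then to obtain that bound by passing to the associated graded ring of the Newton filtration, where it reduces to weighted-homogeneous division once the non-degeneracy hypothesis is converted into a regular-sequence statement.

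First I would record the elementary weight properties of $f$ relative to each top face. For every $F\in\mathcal{F}'$ the restriction $f|_F$ is $B_F$-homogeneous of $B_F$-degree $1$, so Euler's relation holds face-wise,
$$f|_F=\sum_{i=1}^n b_{i,F}\,x_i\,\partial_i(f|_F),$$
and $\hat{\rho}_F(\partial_j f)=1-b_{j,F}$ whenever $\partial_j(f|_F)\neq 0$ (and is strictly larger otherwise). Since $\hat{\rho}_F$ is a monomial valuation, $\tilde{\rho}_F(uv)=\abs{B_F}+\hat{\rho}_F(u)+\hat{\rho}_F(v)$. With these identities one sees that all three displayed inequalities follow at once from the existence of a decomposition $g=\sum_j g_j\partial_j f$ satisfying, for every $F\in\mathcal{F}'$ and every $j$, the degree-exact bound
$$\hat{\rho}_F(g_j)\ \ge\ \hat{\rho}_F(g)-1+b_{j,F}.$$
Indeed, this gives $\tilde{\rho}_F(g_j)\ge \tilde{\rho}_F(g)-1+b_{j,F}\ge\tilde{\rho}(g)-1+b_{j,F}$; taking the infimum over $F$ and using $\hat{\rho}(x_j)=\inf_F b_{j,F}$ yields the second inequality, the relation $\tilde{\rho}_F(g_j\partial_j f)=\tilde{\rho}_F(g_j)+1-b_{j,F}$ yields the first, and the third follows because $\partial/\partial x_j$ lowers the $B_F$-weight by exactly $b_{j,F}$. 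Thus I am reduced to producing one decomposition that respects the entire Newton filtration, not merely one face at a time.

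Second, I would pass to the Newton-graded ring $R=\mathrm{gr}_{\tilde{\rho}}\so=\bigoplus_p \tilde{\so}^{\ge p}/\tilde{\so}^{>p}$, graded by the finitely generated semigroup of values of $\tilde{\rho}$, which is discrete and well-ordered below. Writing $\sigma_j=\mathrm{in}(\partial_j f)\in R$ for the symbols, the bound above is exactly the statement that the symbol $\mathrm{in}(g)$ admits a homogeneous division $\mathrm{in}(g)=\sum_j \bar{g}_j\,\sigma_j$ with each $\bar{g}_j$ of complementary degree. Granting such a division, I lift the $\bar{g}_j$ arbitrarily to $\so$, subtract $\sum_j g_j^{(0)}\partial_j f$ from $g$, observe that the remainder has strictly larger $\tilde{\rho}$-value, and iterate; since the value semigroup is well-ordered and $\so$ is complete for the Newton filtration, the successive approximations converge to a genuine decomposition obeying the per-face bound. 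The face-wise Euler relation is useful here to see that the leading forms $f|_F$, and hence the symbols $\sigma_j$, behave correctly across the maximal cones of the normal fan. The real content is then to show that $\sigma_1,\dots,\sigma_n$ generate the initial ideal of $(\partial f)$, i.e. that $\partial_1 f,\dots,\partial_n f$ form a standard basis of $(\partial f)$ for the Newton filtration. By the usual standard-basis mechanism (Koszul syzygies among the $\sigma_j$ lift to syzygies among the $\partial_j f$, so leading terms close up) it is enough that $\sigma_1,\dots,\sigma_n$ be a regular sequence in $R$, equivalently that their Koszul complex be exact. This is precisely the Kouchnirenko-type reformulation of non-degeneracy: on each maximal cone the symbols restrict to $\partial_j(f|_F)$, and the hypothesis that $f$ has non-degenerate Newton boundary says exactly that these have no common zero in the torus, while the convenient hypothesis keeps $R$ finite over the subring generated by suitable powers $x_i^{N}$, so that "system of parameters" and "regular sequence" coincide.

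The main obstacle is this last step. The ring $R$ is not a polynomial ring — it is glued along the normal fan of the Newton polyhedron and is typically reducible — so both the homogeneous division and the regular-sequence verification must be performed compatibly across all maximal cones, and the non-degeneracy condition has to be invoked on faces of \emph{every} dimension, not just the top-dimensional ones, to control the lower strata where several cones meet. I expect the cleanest route is to import the precise exactness-of-Koszul statement for Newton non-degenerate functions (as in Kouchnirenko's work on Newton polyhedra and its refinement in \cite{BGMM89}) and then feed it into the standard-basis and lifting argument sketched above.
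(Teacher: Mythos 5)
You should know at the outset that the paper itself contains \emph{no} proof of this lemma: it is imported verbatim from \cite[Proposition B.1.2.3.(ii)]{BGMM89}, so there is no internal argument to compare against, and your attempt is in effect a reconstruction of the source's own proof. Its broad architecture (reduce to face-wise estimates, then run division/standard-basis theory for the Newton filtration, with non-degeneracy entering through a regular-sequence statement \`a la Kouchnirenko) is indeed the architecture of that source, and your first step is correct and has real content: the three displayed inequalities do follow from the per-face bound $\hat{\rho}_F(g_j)\ge\hat{\rho}_F(g)-1+b_{j,F}$ for all $F\in\mathcal{F}'$ and all $j$, by exactly the bookkeeping you describe. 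Moreover some per-face statement is genuinely unavoidable here: because the face achieving the infimum defining $\tilde{\rho}$ changes from one quantity to the next, bounds on the global orders $\tilde{\rho}(g_j)$, $\hat{\rho}(g_j)$ alone can never yield the derivative estimate $\tilde{\rho}(\partial g_j/\partial x_j)\ge\tilde{\rho}(g)-1$; in the worst case one loses $\sup_F b_{j,F}-\inf_F b_{j,F}$, which is positive in general.

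The gap is in your second step, and it is precisely the point your reduction was designed to address. The claim that the per-face bound ``is exactly'' the existence of a homogeneous division $\mathrm{in}(g)=\sum_j\bar{g}_j\sigma_j$ in $\mathrm{gr}_{\tilde{\rho}}\so$ is false: that ring is graded by the single global weight, so initial forms, homogeneous division, and your successive-approximation loop only control the global order. What the iteration as written produces is $\tilde{\rho}(g_j)\ge\tilde{\rho}(g)-\hat{\rho}(\partial_jf)$, which does give the first two inequalities (convenience guarantees some $F$ with $\partial_j(f|_F)\neq0$, whence $\hat{\rho}(\partial_jf)\le1-\hat{\rho}(x_j)$), but by the observation above it is strictly too weak for the third. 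What the lemma requires is division with \emph{polyhedral} control --- the support of each $g_j$ confined to an explicit translate of the Newton polyhedron of $g$, i.e.\ the bound for every face simultaneously --- and establishing such a division, cone by cone, with the Koszul/regular-sequence input verified on strata of every dimension, is exactly the content of the appendix of \cite{BGMM89} that the paper cites. Kouchnirenko's theorem by itself supplies the regular sequence, not these weight estimates, so when you propose to ``import the precise exactness-of-Koszul statement \ldots and feed it into the standard-basis and lifting argument,'' you are in substance importing the proposition to be proved rather than a prior independent result. Two further loose ends would remain even then: the approximation scheme a priori converges only formally, so one needs a division theorem valid in $\so=\fC\{x_1,\ldots,x_n\}$ (Grauert--Hironaka type) rather than mere completeness for the Newton filtration; and the well-foundedness of the iteration uses the discreteness of the value set, which again rests on convenience.
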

 By very similar methods we can extend Theorem \ref{thma} to the case when $f$ is convenient and has non-degenerate Newton boundary (see Definition \ref{D:convandnond}). One of the key results we need is the following lemma, very similar to Lemma \ref{L:421}:
\begin{lemma}
If we assume $f$ is a germ of holomorphic function that is convenient and has non-degenerate Newton boundary with respect to the local coordinates, then we have
$$F_pV^{\alpha}(\so_X[\partial_t,\partial_t^{-1}])=\sum_{i\le p}F_{p-i}\doD_{X}(\tilde{\so}^{\ge \alpha+i}\otimes\partial_t^i)\quad\textrm{for any}~\alpha\in\fQ.$$
\end{lemma}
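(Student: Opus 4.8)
The plan is to follow verbatim the proof of Saito's formula in Lemma \ref{L:421}, replacing the weight filtration $\so^{\ge\bullet}$ attached to $\rho$ by the Newton filtration $\tilde{\so}^{\ge\bullet}$ attached to $\tilde{\rho}$, and substituting Lemma \ref{L:weightfunctiong} at each point where weighted homogeneity was used. Concretely, I would prove the two inclusions separately, by induction on the Hodge level $p$ and, within each step, on the $V$-index $\alpha$, using that $t\colon V^\alpha\to V^{\alpha+1}$ is bijective for $\alpha>0$ (Definition \ref{Def:rVfil}(2)). Since $\doD_X$ (derivations in the $x$-variables) and $\partial_t$ preserve the microlocal $V$-filtration while raising the Hodge level by the order of the operator, the factors $F_{p-i}\doD_X$ on the right-hand side are harmless for the $\supseteq$ inclusion, which therefore reduces to the single assertion that $x^A\otimes\partial_t^i\in F_iV^\alpha\so_X[\partial_t,\partial_t^{-1}]$ whenever $\tilde{\rho}(x^A)\ge\alpha+i$; the $\subseteq$ inclusion requires in addition a descending peeling argument over the $\partial_t$-levels. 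In both cases the crux is the sharp comparison between the Newton weight $\tilde{\rho}(x^A)$ and the microlocal $V$-order of the slice $x^A\otimes\partial_t^i$, which should be $\tilde{\rho}(x^A)-i$.

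For the $\supseteq$ inclusion I would show that $\tilde{\rho}(x^A)\ge\alpha+i$ forces $x^A\otimes\partial_t^i\in F_iV^\alpha$. In the weighted homogeneous case this is where the Euler relation $f=\sum_iw_ix_i\partial_if$ enters: it makes $\partial_tt$ act on the class of $x^A$ modulo $(\partial f)$ by the scalar $-\rho(x^A)$, placing $x^A\otimes\partial_t^i$ exactly in $\textrm{Gr}_V^{\rho(x^A)-i}$. Here there is no exact Euler relation (indeed $f\notin(\partial f)$ in general), so I would instead run the computation at the level of the Newton filtration: using the action formulas (\ref{E:taction}) and (\ref{E:daction}) one rewrites $(\partial_tt+\tilde{\rho}(x^A)-i)(x^A\otimes\partial_t^i)$ as a combination of terms lying in $(\partial f)$, and Lemma \ref{L:weightfunctiong} supplies a decomposition of these Jacobian-ideal elements with the weight bounds $\tilde{\rho}(g_j\partial_jf)\ge\tilde{\rho}(g)$ and $\tilde{\rho}(\partial g_j/\partial x_j)\ge\tilde{\rho}(g)-1$ needed to see that the error lands in a strictly deeper step of the filtration. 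Feeding this into the nilpotency axiom Definition \ref{Def:rVfil}(4) together with the microlocal definition (\ref{E:microlocalV}) then yields membership in $V^\alpha$.

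The reverse inclusion $\subseteq$ is the sharpness statement: an element of $F_pV^\alpha$ cannot contain a monomial slice $x^A\otimes\partial_t^i$ with $\tilde{\rho}(x^A)<\alpha+i$. I would argue this by descending induction over the $\partial_t^i$-levels, peeling off the top slice, applying $t$ (whose inverse is explicit on each finite slice $\sum_{i\le p}\so_X\otimes\partial_t^i$, cf. the computation in Lemma \ref{L:jjproperty}) to drop to a lower $V$-index, and reducing the remainder modulo $(\partial f)$. The point is again that the Newton weight of what remains after each reduction is controlled by Lemma \ref{L:weightfunctiong}, so that no weight is lost in passing to the Milnor algebra and the inductive hypothesis applies.

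I expect the main obstacle to be precisely the inexactness of $\tilde{\rho}$: unlike $\rho$ it is not a grading, $f$ is not in $(\partial f)$, and the clean eigenvector picture for $\partial_tt$ degenerates into a merely filtered one. The entire difficulty lies in checking that the three estimates of Lemma \ref{L:weightfunctiong}---in particular the derivative estimate $\tilde{\rho}(\partial g_j/\partial x_j)\ge\tilde{\rho}(g)-1$, which is what keeps the bookkeeping closed when $\partial_t$ and the $\doD_X$-action differentiate the coefficients---propagate through both inductions without degradation, so that the approximate weight grading nevertheless pins down the $V$-filtration exactly.
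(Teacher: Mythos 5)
Your starting point is the right one---rerun the proof of Lemma \ref{L:421} with $\rho$ replaced by $\tilde{\rho}$, letting Lemma \ref{L:weightfunctiong} stand in for the Euler relation---but the architecture you build on it has a fatal flaw. Your inclusion $\subseteq$ rests on the ``sharpness statement'' that an element of $F_pV^\alpha$ cannot contain a slice $x^A\otimes\partial_t^i$ with $\tilde{\rho}(x^A)<\alpha+i$, i.e.\ that $F_pV^\alpha\subseteq\bigoplus_i\tilde{\so}^{\ge\alpha+i}\otimes\partial_t^i$, and on the assertion that the microlocal $V$-order of $x^A\otimes\partial_t^i$ ``should be $\tilde{\rho}(x^A)-i$''. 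Both claims are false, and they contradict the very formula being proved: the right-hand side contains $\partial_{x_j}(a\otimes\partial_t^i)=\partial_{x_j}a\otimes\partial_t^i-(\partial_jf)a\otimes\partial_t^{i+1}$ for $a\in\tilde{\so}^{\ge\alpha+i}$, and multiplication by $\partial_jf$ raises the unshifted Newton weight only by roughly $1-\hat{\rho}(x_j)<1$, so the level-$(i+1)$ slice $(\partial_jf)a$ is in general \emph{not} in $\tilde{\so}^{\ge\alpha+i+1}$. A concrete counterexample occurs already for the convenient, non-degenerate (and weighted homogeneous) cusp $f=x^2+y^3$, where $\tilde{\rho}=\rho$: taking $\alpha=\tfrac{5}{6}$ and $a=1\in\tilde{\so}^{\ge 5/6}$, the element $\partial_x(1\otimes 1)=-2x\otimes\partial_t$ lies in $F_1V^{5/6}$, yet $\rho(x)=\tfrac{4}{3}<\tfrac{11}{6}$; equivalently $x=\tfrac{1}{2}\partial_xf$ has microlocal $V$-order at least $\tfrac{11}{6}$, strictly larger than its weight. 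The Newton weight only bounds the $V$-order from below---elements of $(\partial f)$ sit strictly deeper---so any slice-by-slice peeling argument founded on the converse cannot close. Your $\supseteq$ direction has a milder problem: nilpotency of $\partial_tt+\gamma$ on $\textrm{Gr}_V^\gamma$ (Definition \ref{Def:rVfil}(4)) is a property of $V$, not a membership criterion, so ``feeding into the nilpotency axiom'' must be replaced by a uniqueness argument for a candidate filtration satisfying all the axioms.

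The paper avoids all of this by a different reduction. Writing $S_p=\sum_{i\le p}F_{p-i}\doD_X(\tilde{\so}^{\ge\alpha+i}\otimes\partial_t^i)$, it shows that the full identity follows, via the structure of Saito's original argument, from the single strictness statement $S_p\cap\bigl(\sum_{i\le p-1}\so_X\otimes\partial_t^i\bigr)\subseteq S_{p-1}$, which makes no claim about individual slices. That statement is proved by writing $a=\sum_\nu\partial_x^\nu(a_\nu\otimes\partial_t^{p-\abs{\nu}})$ with $a_\nu\in\tilde{\so}^{\ge\alpha+p-\abs{\nu}}$, observing that membership in $\sum_{i\le p-1}\so_X\otimes\partial_t^i$ forces the top symbol $\sum_\nu(-1)^{\abs{\nu}}(\partial f)^\nu a_\nu$ to vanish, and running an induction on $k(a)=\min\{\abs{\nu}:a_\nu\neq 0\}$: the vanishing gives $\sum_{\abs{\nu}=k}(\partial f)^\nu a_\nu\in(\partial f)^{k+1}$; the regular-sequence property of $(\partial_1f,\ldots,\partial_nf)$ (convenient plus non-degenerate implies isolated singularity) gives $a_\nu\in(\partial f)$ for $\abs{\nu}=k$; and Lemma \ref{L:weightfunctiong} supplies the decomposition $a_\nu=\sum_i\partial_if\,a_{\nu,i}$ with exactly the weight estimates needed to correct $a$ by an element of $S_{p-1}$ and increase $k(a)$, until membership in $S_{p-1}$ becomes automatic. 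So you identified the correct replacement lemma, and correctly anticipated that its derivative estimate is what keeps the bookkeeping closed, but you are missing the key idea that makes it usable: the reduction of the theorem to this intersection statement, which is where the actual induction lives.
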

\begin{proof}
Fix $\alpha\in\fQ$. We denote the sum on the right hand side of the equation to be $$S_p=\sum_{i\le p}F_{p-i}\doD_{X}(\tilde{\so}^{\ge \alpha+i}\otimes\partial_t^i)$$ for any $p\in\rZ$. It is enough to show
 $$S_p\cap(\sum_{i\le p-1}\so_X\otimes\partial_t^i)\subseteq S_{p-1}$$
 for any $p\in Z$. Take an arbitrary element $a$ in the set $$S_p\cap(\sum_{i\le p-1}\so_X\otimes\partial_t^i)-S_{p-1},$$i.e., 
 $$a=\sum_{\nu\in\rN^n}\partial_x^\nu(a_{\nu}\otimes\partial_t^{p-\abs{\nu}})$$
 with $a_\nu\in\tilde{\so}^{\ge \alpha+p-\abs{\nu}}$ such that 
 \begin{equation}\label{E:lhsc}
 \sum_{\nu\in\rN^n}(-1)^\nu(\partial f)^\nu a_\nu=0
 \end{equation}
where $(\partial f)^\nu:=(\frac{\partial f}{\partial x_1})^{\nu_1}\cdot(\frac{\partial f}{\partial x_2})^{\nu_2}\cdots(\frac{\partial f}{\partial x_n})^{\nu_n}$. We want to show that $a\in S_{p-1}$. 

Define  $k(a)=\min\{\abs{\nu}:a_\nu\neq 0\}$ for any such $a$ written in the above expression. We claim that for any $a\in S_p\cap (\sum_{i\le p-1}\so_X\otimes \partial_t^i)-S_{p-1}$, there exists $b\in S_{p-1}$ such that $k(a')\ge k(a)+1$ for the new element $a':=a-b$. Assume $k(a)=k\ge 0$. So
$$a=\sum_{\nu:\abs{v}\ge k} \partial_x^\nu(a_\nu\otimes \partial_t^{p-\abs{\nu}})$$
where $a_\nu\in\tilde{\so}^{\ge\alpha+p-\abs{\nu}}$ such that
\begin{equation}\label{E:lhsc2}
\sum_{\nu:\abs{\nu}\ge k}(-1)^\nu(\partial f)^{\nu}a_\nu=0.
\end{equation}
 Then (\ref{E:lhsc2}) implies that 
$$\sum_{\nu:\abs{\nu}=k}(\partial f)^\nu a_\nu\in (\partial f)^{k+1}.$$
Since $f$ is convenient and has nondegenerate Newton boundary, it is well known that $f$ has an isolated singularity at the origin (see \cite[Remark B.1.1.3]{BGMM89}). Thus we know $(\partial_1f,\cdots,\partial_nf)$ is a regular sequence. Followed by \cite[Remark (ii) after 4.2]{Sai09}, we obtain $a_{\nu}\in(\partial f)$ for $\abs{\nu}=k$. Therefore by Lemma \ref{L:weightfunctiong} we can write 
$$a_{\nu}=\sum_{i=1}^n\partial_if\cdot a_{\nu,i},~\abs{\nu}=k$$
and have the inequalities $\tilde{\rho}(a_{\nu,i})\ge \tilde{\rho}(a_{\nu})-1+\hat{\rho}(x_i)$ and $\tilde{\rho}(\partial_ia_{\nu,i})\ge \tilde{\rho}(a_\nu)-1$. So we know $\partial_x^{\nu}(\partial_ia_{\nu,i}\otimes \partial_t^{p-\abs{\nu}-1})\in F_k\doD_X(\tilde{\so}^{\ge p-1-k}\otimes \partial_t^{p-1-k})\subset S_{p-1}$ for $\nu$ with $\abs{\nu}=k$. Consider 
\begin{align*}
a'&:=a-\sum_{\nu:\abs{\nu}=k}\sum_{i=1}^n\partial_x^\nu(\partial_ia_{\nu,i}\otimes \partial_t^{p-k-1})\\
&=\sum_{\nu:\abs{\nu}=k}\sum_{i=1}^n\partial_x^\nu\big{(}\partial_ifa_{\nu,i}\otimes \partial_t^{p-k}-\partial_i(a_{\nu,i})\otimes \partial_t^{p-k-1}\big{)}+\sum_{\nu:\nu>k}\partial_x^\nu(a_\nu\otimes \partial_t^{p-\abs{\nu}})\\
&=\sum_{\nu:\abs{\nu}=k}\sum_{i=1}^n\partial_x^{\nu+\bf{1}_i}(a_{\nu+\bf{1}_i}'\otimes \partial_t^{p-k-1})+\sum_{\nu:\abs{\nu}\ge k+2}\partial_x^{\nu}(a_\nu\otimes \partial_t^{p-\abs{\nu}})
\end{align*}
where $a'_{\nu+\bf{1}_i}=a_{\nu+\bf{1}_i}-a_{\nu,i}\in \tilde{\so}^{\alpha+p-k-1}$ for $\abs{\nu}=k$. Thus $a'$ is also in the set $S_p\cap (\sum_{i\le p-1}\so_X\otimes \partial_t^i)-S_{p-1}$ and $k(a')=k+1$.
Therefore by induction, $k(a)$ can be assumed to be arbitrarily large. In particular, it is allowed to assume $k(a)=N> \alpha+p$ and $(\partial f)^\nu\in \tilde{O}^{\ge \alpha+p}$ for any $\nu$ with $\abs{\nu}=N-[\alpha]-p-1$. Consequently, we have
$$(\partial f)^{\nu-\nu'}\partial_x^{\nu'}(a_\nu)\otimes\partial_t^{p-\abs{\nu'}}\in \tilde{\so}^{\ge \alpha+p-\abs{\nu'}}\otimes \partial_t^{p-\nu'}, ~\textrm{where}~ \nu'\neq \bf{0}.$$
Observe that
\begin{equation}\label{E:subsp}
\tilde{\so}^{\ge \alpha+p-1-i}\otimes \partial_t^{p-1-i}\subset S_{p-1},~\textrm{for any}~i\ge 0.
\end{equation}
 In particular,  $\tilde{\so}\otimes \partial_t^{p-1-i}\subset S_{p-1}$ for $i\ge [\alpha]+p$.
Hence we finish the proof since (\ref{E:subsp}) implies $a\in S_{p-1}$.
\end{proof}
Then we extend the formula for the Hodge filtration to the non-degenerate case. The proof is the same as that of Theorem \ref{thma}, noting that we still have the property that $\tilde{\rho}(uf)\ge \tilde{\rho}(u)+1$ for any $u\in\so_X$.
\begin{theorem}\label{thmnewton}
If we assume $D=\alpha Z$, where $0<\alpha\le 1$ and $Z=(f=0)$ is an integral and reduced effective divisor defined by $f$, a germ of holomorphic function that is convenient and has non-degenerate Newton boundary with respect to the local coordinates, then we have
 $$F_p\dmM(f^{1-\alpha})=\sum_{i=0}^pF_{p-i}\doD_X\cdot\left(\frac{\tilde{\so}^{\ge \alpha+i}}{f^{i+1}}f^{1-\alpha}\right).$$
where the action $\cdot$ of $\doD_X$ on the right hand side of equation is the action on the left $\doD_X$-module $\dmM(f^{1-\alpha})$ defined by
$$D\cdot(wf^{1-\alpha}):=\left(D(w)+w\frac{(1-\alpha)D(f)}{f}\right)f^{1-\alpha}, ~\textrm{for any}~D\in \textrm{Der}_{\fC}\so_X.$$
\end{theorem}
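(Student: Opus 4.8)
The plan is to mirror the proof of Theorem \ref{thma} essentially verbatim, since the entire argument there is structural and factors cleanly through three inputs: Proposition \ref{prop31}, Lemma \ref{L:psidxaction}, and the explicit description of $F_pV^\alpha\iota_+\so_X$ coming from Lemma \ref{L:421}. None of Proposition \ref{prop31} or Lemma \ref{L:psidxaction} used quasihomogeneity of $f$ — they were established for an arbitrary effective $\fQ$-divisor $D=\alpha H$ — so they transfer without change. The only place where the weighted homogeneous hypothesis entered was through Saito's formula in Lemma \ref{L:421}, which computed the Hodge filtration on the microlocal $V$-filtration in terms of the weight filtration $\so^{\ge\bullet}$. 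The work of this section has already replaced that input with its Newton-boundary analogue, proved just above, giving
$$F_pV^{\alpha}(\so_X[\partial_t,\partial_t^{-1}])=\sum_{i\le p}F_{p-i}\doD_{X}(\tilde{\so}^{\ge \alpha+i}\otimes\partial_t^i).$$

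First I would intersect this microlocal identity with $\so_X[\partial_t]$ to recover the genuine $V$-filtration on $\iota_+\so_X$, exactly as in the proof of Theorem \ref{thma}: since $i^{-1}(V^\alpha\so_X[\partial_t,\partial_t^{-1}])=V^\alpha\so_X[\partial_t]$ for the natural inclusion $i$, one gets $F_pV^\alpha\iota_+\so_X=F_pV^\alpha\so_X[\partial_t,\partial_t^{-1}]\cap\so_X[\partial_t]$. Using that $\tilde{\so}^{\ge\alpha+i}=\so_X$ for $i\le -1$ and that $(\partial_1 f,\ldots,\partial_n f)\subset\tilde{\so}^{\ge 1}\subset\tilde{\so}^{\ge\alpha}$ for $0<\alpha\le 1$, the negative-index terms collapse and one obtains
$$F_pV^\alpha\iota_+\so_X=\sum_{i=0}^p F_{p-i}\doD_X(\tilde{\so}^{\ge\alpha+i}\otimes\partial_t^i).$$
(The containment $(\partial_j f)\subset\tilde{\so}^{\ge 1}$ is where convenience and non-degeneracy are quietly used, via the weight bookkeeping in Lemma \ref{L:weightfunctiong}; I would check it at the level of single monomials of $f$, since $\tilde{\rho}(\partial_j(x^A))\ge\tilde{\rho}(x^A)-\hat{\rho}(x_j)\ge 1$ on the boundary.) Then Proposition \ref{prop31} rewrites $F_p\dmM(f^{1-\alpha})$ as $\Psi$ applied to the right-hand side, and Lemma \ref{L:psidxaction} converts each piece $F_{p-i}\doD_X(\tilde{\so}^{\ge\alpha+i}\otimes\partial_t^i)$ into $F_{p-i}\doD_X\cdot\big(\tfrac{\tilde{\so}^{\ge\alpha+i}}{f^{i+1}}f^{1-\alpha}\big)$, yielding the claimed formula.

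The one genuinely non-formal point — and the step I expect to be the main obstacle — is verifying that the formal substitution of $\tilde{\so}^{\ge\bullet}$ for $\so^{\ge\bullet}$ is legitimate at each invocation of the earlier lemmas, i.e. that every structural property of the weight filtration actually used in the Theorem \ref{thma} argument survives for the Newton-boundary weight. The author flags the key one: one needs $\tilde{\rho}(uf)\ge\tilde{\rho}(u)+1$ for all $u\in\so_X$, equivalently $f\in\tilde{\so}^{\ge 1}$ together with multiplicativity of the lower bound, so that multiplication by $f$ still raises the filtration level by one and the reindexing in $\Psi$ and in Lemma \ref{L:psidxaction} goes through. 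This requires a short check that $\tilde{\rho}$ is sub-additive and that every monomial of $f$ has $\tilde{\rho}$-weight at least $1$ — clear on the Newton boundary $\Gamma$, since each $A\in N(f)$ satisfies $\langle A,B_F\rangle\ge 1$ for all $F\in\mathcal{F}'$, whence $\hat{\rho}(x^A)\ge 1$. Granting this, the entire chain Proposition \ref{prop31} $\to$ Lemma \ref{L:psidxaction} $\to$ microlocal formula composes identically to the quasihomogeneous case, and no new calculation is needed.
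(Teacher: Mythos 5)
Your proposal is correct and takes essentially the same route as the paper: the paper proves Theorem \ref{thmnewton} by exactly this argument, invoking the Section \ref{S:nondegenerate} analogue of Lemma \ref{L:421} and remarking that the proof of Theorem \ref{thma} then goes through verbatim once one notes $\tilde{\rho}(uf)\ge\tilde{\rho}(u)+1$ for all $u\in\so_X$. The checks you spell out — collapsing the negative-index terms via $\tilde{\so}^{\ge\alpha+i}=\so_X$ for $i\le-1$ and $(\partial_1f,\ldots,\partial_nf)\subset\tilde{\so}^{\ge 1}\subset\tilde{\so}^{\ge\alpha}$, then composing Proposition \ref{prop31} with Lemma \ref{L:psidxaction} — are precisely the steps the paper leaves implicit, and your verification that every monomial $x^A$ of $f$ satisfies $\langle A,B_F\rangle\ge 1$ supplies the weight inequality the paper only asserts.
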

\begin{remark}
Notice that Theorem \ref{thma} and Theorem \ref{thmnewton} do not imply each other. For example, $f=(x_1+x_2)^2+x_1x_3+x_3^2$ is a weighted homogeneous polynomial, but it degenerates on the face $\overline{(1,0,0)(0,1,0)}$. On the other hand, there exist many polynomials having non-degenerate Newton boundary, but which are not weighted homogeneous, e.g. $f=x^2y^2+x^5+y^5$.
\end{remark}

\end{document}